\newenvironment{lowerlimit}[1]
 {\low}
\newenvironment{upperlimit}[1]
 {\up}
\newenvironment{regularity}[1]
 {\reg}
\newenvironment{confining}[1]
 {\conf}
\newenvironment{stable}[1]
 {\sta}
\newtheorem{theorem}{Theorem}
\newtheorem{corollary}[theorem]{Corollary}
\newtheorem{definition}[theorem]{Definition}
\newtheorem{proposition}[theorem]{Proposition}
\newtheorem{lemma}[theorem]{Lemma}
\newtheorem{remark}[theorem]{Remark}
\renewenvironment{abstract}
 {\par \noindent\textbf{Abstract.}\ \ignorespaces}
 {\par\vspace{5mm}}
\newenvironment{resume}
 {\par \noindent\textbf{Résumé.}\ \ignorespaces}
 {\par\vspace{3mm}}
\newenvironment{msc}
 {\par \noindent\textit{2010 MSC:}\ \ignorespaces}
 {\par\vspace{3mm}}
\newenvironment{keywords}
 {\par \noindent\textit{Keywords:}\ \ignorespaces}
 {\par\vspace{3mm}} 
\numberwithin{theorem}{section}
\numberwithin{equation}{section}
\title{A large deviation principle for
empirical measures
on Polish spaces: Application 
to singular Gibbs measures on manifolds}
\author{David García-Zelada
\\
\textit{\footnotesize
 CEREMADE, UMR CNRS 7534 Université 
Paris-Dauphine, PSL Research university,}
\\ 
\textit{\footnotesize Place du Maréchal
de Lattre de Tassigny 75016 Paris, France.}
\\
\textit{\footnotesize E-mail: garciazelada@ceremade.dauphine.fr}}
\begin{document}

\maketitle

\hrule
\vspace{7mm}

\begin{abstract}
\small
We prove a large deviation principle
for a sequence of point processes defined
by Gibbs probability measures 
on a Polish space. This is obtained as
a consequence of a more general
Laplace principle for
the non-normalized Gibbs measures.
We consider four main applications:
Conditional Gibbs measures on compact spaces, Coulomb
gases on compact Riemannian manifolds,
the usual Gibbs measures in the Euclidean space
and the zeros of Gaussian random polynomials.
Finally, we study
the generalization of Fekete points
and prove a
deterministic version of the Laplace principle
known as $\Gamma$-convergence.
The approach is partly inspired by
the works of Dupuis and co-authors.
It is remarkably natural and general
compared to the usual strategies
for singular Gibbs measures.
\end{abstract}

\begin{resume}
\small
On montre un principe de grandes déviations pour une suite de processus ponctuels définit par des mesures de probabilités de Gibbs dans un espace polonais. Il est obtenu comme conséquence d’un principe de Laplace pour des mesures de Gibbs non normalisées. On considère quatre applications: Des mesures de Gibbs conditionnées dans des espaces compacts, des gaz de Coulomb sur des variétés riemanniennes compactes, les mesures de Gibbs habituelles sur l’espace euclidien et les zéros des polynômes aléatoires gaussiens. Finalement, on étudie la généralisation des points Fekete et on prouve une version déterministe du principe de Laplace appelée $\Gamma$-convergence. Notre approche est partiellement inspirée par les travaux de Dupuis et ses coauteurs. C’est notablement naturelle et générale en comparaison avec les stratégies habituelles pour les mesures de Gibbs singulières.
\end{resume}

\begin{msc}
\small
60F10; 60K35; 82C22; 30C15
\end{msc}

\begin{keywords} 
\small
Gibbs measure; 
Coulomb gas; Empirical measure; Large deviation principle; 
Interacting particle system;
Singular potential;
Constant curvature;
Relative entropy; Random polynomials;
Fekete points
\end{keywords}

\vspace{3mm}

\hrule

\section{Introduction}

\label{section: introduction}

The present article 
is inspired by part of the work of 
Dupuis, Laschos and Ramanan on large deviations
for a sequence of point processes 
given by Gibbs measures associated to
very general singular two-body interactions
\cite{adupuis} but it differs
from it in that we take a general
sequence of interactions
that includes, for instance,
the interaction followed by
the zeros of random polynomials
as in 
\cite{zelditch}. We follow
the philosophy of 
Dupuis and Ellis 
\cite{bdupuis} about
the use of variational formulas
to make plausible and
sometimes easier to find a Laplace principle.
This philosophy
has already been used 
by Georgii in \cite{zgeorgii}
to treat a system of random fields on $\mathbb Z^d$
with interacting energies that converges
uniformly to
some limit functional.

We are interested in 
proving the Laplace principle
and the large deviation principle
for a very general sequence of energies
in a not necessarily compact space.
Part of our work has an overlap with
the article of Berman
\cite{berman2} and it was developed independently.
As in \cite{berman2} the interest of this result
is the generality of the sequence of energies:
they do not need to be made of a two-body
interaction potential but they may still be
very singular.
The key argument of the proof
is a well-understood application
of Jensen's inequality
together with a general Laplace principle
that has as its main ingredient
a subadditivity property of the entropy.
It is very simple compared
to the ad hoc methods 
used in the usual proofs of the large deviation
principles for Coulomb gases such as in 
\cite{hiaiunitary}, \cite{hiaigaussian}
\cite{chafai} and \cite{hardy}. 
In these methods, to prove a large deviation
lower bound, the authors usually
decompose the space
in small regions and this decomposition
may not be easy to achieve on a manifold and
not so natural to look for. We give a more precise
explanation of these methods
in Remark \ref{remark: usual}. 

Among the applications we can give
we are particularly interested
in explaining a simple case inspired by
\cite{bermanKEmetrics}. This is the
case of a Coulomb gas on a two-dimensional 
Riemannian manifold.
As a second application 
we study
a large deviation principle for a 
conditional Gibbs measure, i.e. we fix the position of some
of the particles and leave the rest of them random. 
The last applications we discuss are 
different proofs of already known results
such as the special one-dimensional log-gas of
\cite{ldpwigner} related
to the Gaussian ensembles, the
more general one-dimensional log-gas of
\cite[Section 2.6]{guionnet},
the special two-dimensional log-gas \cite{hiaigaussian} 
related to the Ginibre ensemble of random matrices 
and its generalization
to an $n$-dimensional Coulomb gas
in \cite{chafai} and \cite{adupuis}, the note in 
 \cite{hardy} about two-dimensional log-gases
with a weakly confining potential and
the Gaussian random polynomials of
\cite{zelditch} and \cite{raphael}.

We now explain the contents of each section.
The rest of Section \ref{section: introduction} will be dedicated to 
the main definitions and assumptions we will 
need to state our results.
Section \ref{section: kbody} is about
the usual mean-field case,
the $k$-body interaction. We give sufficient
conditions to be able to apply our result
which will become important
when we treat the Euclidean space case.
In Section \ref{section: proof theorems} we 
begin by giving an idea of the proofs which
includes mainly 
a key variational formula.
Then we give the proofs
of the main theorem and of its corollary
and we finish the section by giving some
remarks about the usual proofs
we may find in the literature.
We discuss four particular
examples in Section \ref{section: applications}.
More precisely, the conditional Gibbs measure,
the Coulomb gas on a Riemannian manifold,
a new 
way to obtain already known results in the Euclidean space about Coulomb gases and the assertion that
the zeros of a Gaussian random polynomials may
be treated by our main theorem.
We conclude our article with Section
\ref{section: fekete} discussing a deterministic
case which falls under the topic
of Fekete points and which we consider as the natural
deterministic analogue of the Laplace principle.

\subsection{Model}

Let $M$ be a Polish space,
i.e. a separable topological space
metrizable by a complete metric.
Endow it with the Borel $\sigma$-algebra associated
to this topology, i.e. the least $\sigma$-algebra
that contains the topology.
Denote by $\mathcal P(M)$
the space of probability measures in $M$ and
endow it with the
smallest topology such that
$\mu \mapsto \int_M f d\mu$ is continuous for every 
bounded continuous function $f: M \to \mathbb R$.
With this topology, $\mathcal P(M)$ is also a Polish space
(see \cite[Section 2.4]{borkar}).
This is called the weak topology.
Suppose we have a sequence
$\{W_n \}_{n \in \mathbb N}$
of symmetric measurable functions 
	$$W_n: M^n \to (- \infty, \infty]$$
and a sequence of non-negative numbers 
$\{\beta_n\}_{n \in \mathbb N}$ that converges
to some $\beta \in (0, \infty]$.	
Fix a probability measure $\pi \in \mathcal P(M)$.
We shall be
interested in the
asymptotic behavior of the
 {\bf Gibbs measures} $\gamma_n$ defined by
	\begin{equation}	\label{eq: gibbs}
	d   {\gamma}_n =
	e^{-n \beta_n W_n} d  \pi^{\otimes_n}.
	\end{equation}
Define $\tilde W_n:  \mathcal P(M)  \to 
	(- \infty,  \infty]$ by
	\begin{equation} \label{eq: extension}
	\tilde W_n(\mu) = 
	\begin{cases}
	W_n(x_1,...,x_n) & \mbox{ if } \mu \mbox{ is atomic 
	with } \mu = \frac{1}{n}\sum_{i=1}^n \delta_{x_i}\\
	\infty & \mbox{ otherwise.}
	\end{cases}
	\end{equation}	
	
\begin{stable}{(S)}
\label{stablesequence}
We shall say that the sequence
$\{W_n \}_{n \in \mathbb N}$ is a 
{\bf stable sequence} if it is
uniformly bounded from below, i.e. if
there exists $C \in \mathbb R$ such that
	$$ W_n \geq C \mbox{ for all } n \in \mathbb N. $$
\end{stable}

\begin{confining}{(C)} 
\label{confiningsequence}
We shall
say that $\{W_n\}_{n \in \mathbb N}$
is a {\bf confining sequence}
if the following is true. Let
 $\{n_j\}_{j \in \mathbb N}$ be any
increasing sequence of natural numbers
and let $\{\mu_j\}_{j \in \mathbb N}$ be any sequence
of probability measures on $M$. If there exists
a real constant $A$ such that
	$$\tilde W_{n_j}(\mu_j) \leq A$$
for every $j  \in \mathbb N$, where $\tilde W_n$
is defined in (\ref{eq: extension}), then 
$\{\mu_j\}_{j \in \mathbb N}$ is relatively
compact in $\mathcal P(M)$.
\end{confining}
\ \\
In order to study the behavior as $n \to \infty$
of $\gamma_n$ we shall need a
measurable function
	$$W : \mathcal P(M) \to (- \infty, \infty].$$

\begin{definition}[Macroscopic limit]
Suppose that
$\{W_n \}_{n \in \mathbb N}$
is a {\it stable sequence} \ref{stablesequence}.
We say that 
a measurable function
$W : \mathcal P(M) \to (- \infty, \infty]$ is
the {\bf positive temperature macroscopic limit} of 
the sequence $\{W_n \}_{n \in \mathbb N}$ if
the following two conditions are satisfied.

\begin{lowerlimit}{(A1)}
\label{lowerlimitassumption}
	For every sequence $\{\mu_n\}_{n\in \mathbb N}$
	of probability measures on $M$ that
	 converges to some
	probability measure $\mu$ we have 
	$$
		\liminf_{n \to \infty} \tilde W_n(\mu_n) \geq W(\mu)
	$$
where $\tilde W_n$
is defined in (\ref{eq: extension}).	
\end{lowerlimit}

\begin{upperlimit}{(A2)}	
\label{upperlimitassumption}
	For each $\mu \in \mathcal P(M)$ we have that
	$$
	\limsup_{n \to \infty} \mathbb E_{\mu^{\otimes_n}}
		 [W_n] \leq W(\mu).
	$$
\end{upperlimit}		

We say that $W$ is the 
 {\bf zero temperature macroscopic limit} of 
the sequence $\{W_n \}_{n \in \mathbb N}$ if
instead
 the {\it lower limit assumption}
 \ref{lowerlimitassumption} and
the following condition are satisfied.

\begin{regularity}{(A2')} 
\label{regularityassumption}
Define the set of `nice' probability measures
	\begin{equation} \label{eq: nice}
	\mathcal N = \left\{ 
	\mu \in \mathcal P(M) :\ D(\mu \| \pi) < \infty
	 \mbox{ and } \limsup_{n \to \infty} \mathbb
	 E_{\mu^{\otimes_n}}
		 [W_n] \leq W(\mu)
		 \right\} .
	\end{equation}
For every $\mu \in \mathcal P(M)$ such
that $W(\mu) < \infty$ we can find a sequence
of probability measures
$\{\mu_n \}_{n \in \mathbb N}$ in $\mathcal N$
such that 
$\mu_n \to \mu$
and $\limsup_{n \to \infty} W(\mu_n) \leq W(\mu)$.
\end{regularity}

\end{definition}
Now we are ready to state the Laplace
principles and the large deviation principles.

\subsection{Main results}

Let
$i_n : M^n \to \mathcal P(M)	$
be the application defined by	
	\begin{equation} \label{eq: inclusion}
	i_n
	(x_1,...,x_n) \mapsto \frac{1}{n}\sum_{i=1}^n
	 \delta_{x_i},
	 \end{equation}
the usual continuous `inclusion' of $M^n$ in $\mathcal P(M)$.
Define the {\bf free energy} with parameter $\beta$
as
	\begin{equation}\label{eq: free}
	F = W + \frac{1}{\beta}D( \cdot \| \pi),
	\end{equation}
(we suppose $0 \times \infty = 0$)
where $D(\mu \| \nu)$ denotes the 
relative entropy of $\mu$ with
respect to $\nu$, also known as the 
Kullback–Leibler divergence i.e.
	\begin{equation}	\label{eq: entropy}
	D(\mu \| \nu) = \int_M \frac{d \mu}{d \nu}
	 \log \left(\frac{d \mu}{d \nu} \right) 
	d\nu	
	\end{equation}
if $\mu$ is absolutely continuous with respect to $\nu$
 and $D(\mu \| \nu) = \infty$ otherwise.

\begin{theorem}[Laplace principle]
\label{theorem: laplace}
Let $\{W_n \}_{n \in \mathbb N}$ be a stable
sequence \ref{stablesequence} and 
$W: \mathcal P(M) \to 	(- \infty,  \infty]$
 a measurable function.
Take a sequence 
of positive numbers $\{\beta_n\}_{n \in \mathbb N}$
that converges to some $\beta \in (0, \infty]$.

If $\beta < \infty$ suppose
that $W$ is the positive temperature macroscopic
limit of $\{W_n\}_{n \in \mathbb N}$.

If $\beta = \infty$ suppose 
that $W$ is the zero temperature macroscopic
limit of $\{W_n\}_{n \in \mathbb N}$ and
suppose that $\{W_n \}_{n \in \mathbb N}$  is a
{\it confining sequence} \ref{confiningsequence}.

Define the Gibbs measures $\gamma_n$ by 
(\ref{eq: gibbs})
and the free energy $F$ by (\ref{eq: free}).
Then, the following Laplace's principle is satisfied.

For every bounded continuous
function $f: \mathcal P(M) \to \mathbb R$
	$$\frac{1}{n \beta_n} \log \,  \int_{M^n} 
	 e^{- n \beta_n f \circ i_n} d \gamma_n 
	 \,
     \xrightarrow[\: n \to \infty \:]{}
     \,
	- \inf_{\mu \in \mathcal P(M) } \{ f \left( \mu \right) +
	 F \left( \mu \right) \}.$$	 
	 
\end{theorem}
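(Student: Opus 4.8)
The plan is to reduce the statement to two standard facts about the empirical measure under the reference measure $\pi$: the inequality $\int e^{-g}\,dP \ge \exp\bigl(-\int g\,dQ - D(Q\|P)\bigr)$, valid for any $Q\ll P$ (an immediate consequence of Jensen's inequality and the definition of relative entropy), and Sanov's theorem, which says that the laws $\lambda_n := (i_n)_*\pi^{\otimes_n}$ on $\mathcal P(M)$ satisfy a large deviation principle for the weak topology with good rate function $D(\cdot\|\pi)$, together with the exponential tightness of $\{\lambda_n\}_n$. First I would rewrite the object of interest: since $d\gamma_n = e^{-n\beta_n W_n}\,d\pi^{\otimes_n}$ and $W_n = \tilde W_n\circ i_n$ on $M^n$,
\[
\int_{M^n}e^{-n\beta_n\, f\circ i_n}\,d\gamma_n = \int_{M^n}e^{-n\beta_n\,(f+\tilde W_n)\circ i_n}\,d\pi^{\otimes_n} = \int_{\mathcal P(M)}e^{-n\beta_n\,(f+\tilde W_n)}\,d\lambda_n .
\]
Write $A_n$ for $\tfrac{1}{n\beta_n}$ times the logarithm of this quantity and put $m:=\inf_{\mu\in\mathcal P(M)}\{f(\mu)+F(\mu)\}$ (note that $W\ge C$ by \ref{stablesequence} and \ref{upperlimitassumption} or \ref{regularityassumption}, so $m>-\infty$); the goal is $A_n\to -m$.

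\emph{Lower bound $\liminf_n A_n\ge -m$}, which is where the variational viewpoint pays off. Fix $\mu$ with $F(\mu)<\infty$; then $D(\mu\|\pi)<\infty$, so $\mu^{\otimes_n}\ll\pi^{\otimes_n}$, and the displayed inequality applied with $P=\pi^{\otimes_n}$, $Q=\mu^{\otimes_n}$, $g=n\beta_n(f\circ i_n+W_n)$ — using $D(\mu^{\otimes_n}\|\pi^{\otimes_n})=nD(\mu\|\pi)$ — yields
\[
A_n \ge -\,\mathbb E_{\mu^{\otimes_n}}[f\circ i_n] - \mathbb E_{\mu^{\otimes_n}}[W_n] - \tfrac{1}{\beta_n}D(\mu\|\pi).
\]
For i.i.d.\ samples from $\mu$, their empirical measures converge weakly to $\mu$ almost surely, and $f$ is bounded continuous, so $\mathbb E_{\mu^{\otimes_n}}[f\circ i_n]\to f(\mu)$; $\limsup_n\mathbb E_{\mu^{\otimes_n}}[W_n]\le W(\mu)$ by \ref{upperlimitassumption}; and $\tfrac1{\beta_n}D(\mu\|\pi)\to\tfrac1\beta D(\mu\|\pi)$. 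Hence $\liminf_n A_n\ge -(f(\mu)+F(\mu))$, and taking the supremum over such $\mu$ gives the claim. When $\beta=\infty$ one first uses \ref{regularityassumption} to replace a general $\mu$ with $W(\mu)<\infty$ by an approximating sequence $\{\mu_k\}\subset\mathcal N$ with $\limsup_k W(\mu_k)\le W(\mu)$, along which $D(\cdot\|\pi)<\infty$ so the entropy term disappears in the limit. Crucially, no decomposition of $\mathcal P(M)$ into small regions is needed here.

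\emph{Upper bound $\limsup_n A_n\le -m$}, the harder direction. Set $\phi_n := f+\tilde W_n$ and let $\underline\phi(\mu):=\inf\{\liminf_n\phi_n(\nu_n):\nu_n\to\mu\}$; then \ref{lowerlimitassumption} says exactly $\underline\phi\ge f+W$. A short argument by contradiction — exploiting that $\underline\phi$ is defined through \emph{all} sequences $\nu_n\to\mu$ — shows that for every $\mu$ and every $\delta>0$ there are an open $U_\mu\ni\mu$ and an $N_\mu$ with $\phi_n>\min(\underline\phi(\mu),1/\delta)-\delta$ on $\overline{U_\mu}$ for $n\ge N_\mu$; shrinking $U_\mu$ using lower semicontinuity of $D(\cdot\|\pi)$ at $\mu$, one may also demand $\inf_{\overline{U_\mu}}D(\cdot\|\pi)\ge\min(D(\mu\|\pi),1/\delta)-\delta$. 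Fixing $\delta$ and $L$, choose by exponential tightness a compact $K\subset\mathcal P(M)$ with $\limsup_n\tfrac1n\log\lambda_n(K^c)\le -L$, cover $K$ by finitely many $U_{\mu_1},\dots,U_{\mu_J}$, and bound, for $n$ large,
\[
\int_{\mathcal P(M)}e^{-n\beta_n\phi_n}\,d\lambda_n \le \sum_{i=1}^{J}e^{-n\beta_n(\min(\underline\phi(\mu_i),1/\delta)-\delta)}\lambda_n(\overline{U_{\mu_i}}) + e^{-n\beta_n(\inf f+C)}\lambda_n(K^c).
\]
Applying Sanov's upper bound to the closed sets $\overline{U_{\mu_i}}$ and $K^c$, passing to the $\limsup$, combining the resulting exponential rates by means of the elementary bound $\min(a,1/\delta)+\tfrac1\beta\min(b,1/\delta)\ge\min\bigl(a+\tfrac1\beta b,\,c_\delta\bigr)$ with $c_\delta\to\infty$ as $\delta\to0$, and using $\underline\phi(\mu_i)+\tfrac1\beta D(\mu_i\|\pi)\ge f(\mu_i)+W(\mu_i)+\tfrac1\beta D(\mu_i\|\pi)\ge m$, one obtains $\limsup_n A_n\le -\min(m,c_\delta)+O(\delta)$; letting first $L\to\infty$ (so the $K^c$ term becomes negligible) and then $\delta\downarrow0$ gives $\limsup_n A_n\le -m$. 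When $\beta=\infty$, where $F=W$ and the entropy plays no role, one replaces the exponentially tight sets $K$ by a \emph{fixed} compact set containing $\{\tilde W_n\le A\}$ for all $n$ — which is precisely what the confining assumption \ref{confiningsequence} supplies — and on its complement $\phi_n>\inf f+A$, contributing the exponential rate $-(\inf f+A)\to-\infty$.

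The hard part is this upper estimate, and two features make it delicate in the stated generality. First, $W$ need not be lower semicontinuous; this is circumvented by carrying out the whole argument with the relaxation $\underline\phi\ge f+W$ rather than with $f+W$ itself. Second, although $D(\cdot\|\pi)$ is a good rate function, its sublevel sets typically carry no $\lambda_n$-mass whatsoever (the $\lambda_n$ are supported on discrete measures, which have infinite relative entropy as soon as $\pi$ is diffuse), so one cannot reduce to a sublevel set of $D$; instead one must work with the genuine open covering and Sanov's upper bound for its closed members, and combine the $\phi_n$-contribution and the $D$-contribution only at the centres $\mu_i$ — which works exactly because $D(\cdot\|\pi)\ge0$ and $\underline\phi$ is bounded below. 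Non-compactness of $\mathcal P(M)$ is handled by exponential tightness when $\beta<\infty$ and by \ref{confiningsequence} when $\beta=\infty$.
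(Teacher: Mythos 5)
Your lower bound ($\liminf$ of the normalized log-integral $\geq -m$) is essentially the paper's argument in disguise: testing with $\mu^{\otimes_n}$ via Jensen is exactly the paper's choice $\tau_n=\mu^{\otimes_n}$ in the variational formula of Lemma \ref{lemma: legendre}, followed by the law of large numbers, \ref{upperlimitassumption} and, for $\beta=\infty$, \ref{regularityassumption}. Your upper bound, the hard half, takes a genuinely different route. The paper keeps the exact Donsker--Varadhan identity, picks near-optimal controls $\tau_j\in\mathcal P(M^{n_j})$, shows $\{i_{n_j}(\tau_j)\}_j$ is relatively compact in $\mathcal P(\mathcal P(M))$ (Lemma \ref{lemma: entropy confining} when $\beta<\infty$, Proposition \ref{generalconfining} when $\beta=\infty$), and concludes by lower semicontinuity (Lemma \ref{lemma: entropy lower limit} together with the integrated form of \ref{lowerlimitassumption}); no covering of $\mathcal P(M)$ appears. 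You instead run a Varadhan-type argument: Sanov's upper bound and exponential tightness for $\lambda_n=(i_n)_*\pi^{\otimes_n}$, a finite cover by neighbourhoods on whose closures the relaxed envelope $\underline\phi$ and $D(\cdot\|\pi)$ are almost attained, and a truncation to combine the two rates. For $\beta<\infty$ this is correct: the relaxation $\underline\phi\geq f+W$ is the right device for a non-lower-semicontinuous $W$, and combining the rates only at the centres works because $D\geq 0$ and $\underline\phi\geq \inf f+C$. It buys a proof from classical LDP inputs only, at the price of the $\delta$/truncation bookkeeping that the weak-convergence argument avoids.

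The genuine gap is in the $\beta=\infty$ case. You assert that \ref{confiningsequence} ``supplies a fixed compact set containing $\{\tilde W_n\leq A\}$ for all $n$''. It does not: the assumption controls diagonal sequences $\mu_j$ with $\tilde W_{n_j}(\mu_j)\leq A$ along an increasing sequence $\{n_j\}$, which is strictly weaker than relative compactness of $\bigcup_n\{\tilde W_n\leq A\}$. One can arrange each sublevel set $\{\tilde W_n\leq A\}$ to contain an infinite $\epsilon_n$-separated family with $\epsilon_n\downarrow 0$ accumulating on a fixed compact set, so that every diagonal sequence converges (the assumption holds) while no single compact $K$ contains $\{\tilde W_n\leq A\}$ for all large $n$. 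Then your cover of $K$ misses mass that is not negligible at speed $n\beta_n$ (exponential tightness of $\lambda_n$ holds only at speed $n$, which is useless once $\beta_n\to\infty$), and the argument stalls. This is precisely where the paper's formulation pays off: it only needs relative compactness of the one sequence $\{i_{n_j}(\tau_j)\}_j$ attached to the near-minimizers, which is what \ref{confiningsequence}, through Proposition \ref{generalconfining}, actually delivers. To repair your version you must either strengthen \ref{confiningsequence} to uniform relative compactness of the sublevel sets (true in the $k$-body application by the uniform bound of Lemma \ref{boundconfining}, and also recoverable if ``increasing'' is read as allowing constant sequences $n_j\equiv n_0$), or replace the fixed-compact reduction by the measure-level tightness statement of Proposition \ref{generalconfining}.
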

\
\\ 
This Laplace principle implies the following
large deviation principle.

\begin{corollary}
[Large deviation principle]
\label{cor: deviations}

Suppose the same conditions as in Theorem
 \ref{theorem: laplace}.
Define $Z_n = \gamma_n (M^n)$.
Suppose $Z_n > 0$ for every $n$ and notice that,
as $W_n$ is bounded from below, $Z_n < \infty$. 
Take the sequence of probability measures
$\{\mathbb P_n\}_{n \in \mathbb N}$ defined by
\begin{equation} 
\label{eq: gibbs probability finite beta}
	d \mathbb P_n = \frac{1}{Z_n} d \gamma_n.
\end{equation}	

For each $n \in \mathbb N$, let $i_n(\mathbb P_n)$
be the pushforward measure of $\mathbb P_n$
by $i_n$.
Then the sequence 
$\{i_n(\mathbb P_n)\}_{n \in \mathbb N}$ 
satisfies a large deviation
principle with speed $n \beta_n$ and with rate function
	$$I = F - \inf F,$$
i.e.
for every open set $A \subset \mathcal P(M)$
we have
	$$\liminf_{n \to \infty} \frac{1}{n \beta_n}
	\log \mathbb P_n(i_n^{-1}(A)) \geq - 
	\inf_{\mu \in A} I(\mu)$$	
and for every closed set $C \subset \mathcal P(M)$
we have
	$$\limsup_{n \to \infty} \frac{1}{n \beta_n}
	\log \mathbb P_n(i_n^{-1}(C)) \leq - 
	\inf_{\mu \in C} I(\mu).$$	
\end{corollary}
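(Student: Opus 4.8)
\emph{Proof proposal.}\; The idea is to first recast the statement as a Laplace principle for the normalised push\nobreakdash-forwards $i_n(\mathbb P_n)$ and then to pass to a large deviation principle in the usual way. Applying Theorem~\ref{theorem: laplace} to $f\equiv 0$ gives
\begin{equation*}
\frac{1}{n\beta_n}\log Z_n\;\xrightarrow[\,n\to\infty\,]{}\;-\inf_{\mu\in\mathcal P(M)}F(\mu);
\end{equation*}
since $W_n\ge C$ and $D(\cdot\|\pi)\ge 0$ we have $F\ge C$, so $\inf F\in[C,\infty)$ (we assume, as we may, that $\inf F<\infty$, otherwise $I$ is not even well defined). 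For a bounded continuous $f:\mathcal P(M)\to\mathbb R$ the identity $\int_{\mathcal P(M)}e^{-n\beta_n f}\,d\bigl(i_n(\mathbb P_n)\bigr)=\frac1{Z_n}\int_{M^n}e^{-n\beta_n f\circ i_n}\,d\gamma_n$, combined with Theorem~\ref{theorem: laplace} and the limit above, yields
\begin{equation*}
\frac{1}{n\beta_n}\log\int_{\mathcal P(M)}e^{-n\beta_n f}\,d\bigl(i_n(\mathbb P_n)\bigr)\;\xrightarrow[\,n\to\infty\,]{}\;-\inf_{\mu\in\mathcal P(M)}\bigl\{f(\mu)+I(\mu)\bigr\},\qquad I:=F-\inf F.
\end{equation*}
Thus $\{i_n(\mathbb P_n)\}$ obeys the Laplace principle with the non\nobreakdash-negative rate function $I$.

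To convert this into the large deviation principle I would invoke the standard equivalence between the two (e.g.\ Dupuis--Ellis, Chapter~1, or Bryc's inverse Varadhan lemma): the lower bound over open sets is exactly equivalent to the Laplace upper bound just established, and the only additional input needed for the upper bound over arbitrary closed sets is exponential tightness of $\{i_n(\mathbb P_n)\}$ at speed $n\beta_n$, i.e.\ that for every $L>0$ there is a compact $K\subset\mathcal P(M)$ with $\limsup_n\frac1{n\beta_n}\log\mathbb P_n\bigl(i_n^{-1}(K^c)\bigr)\le -L$.

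Establishing this exponential tightness is the step I expect to be the main obstacle, and it is where the two regimes of $\beta$ genuinely differ. If $\beta<\infty$, the bound $d\gamma_n\le e^{-n\beta_n C}\,d\pi^{\otimes_n}$ gives $\mathbb P_n\le (e^{-n\beta_n C}/Z_n)\,\pi^{\otimes_n}$, so the claim follows from the exponential tightness of $\{i_n(\pi^{\otimes_n})\}$ at speed $n$ (a standard consequence of the tightness of $\pi$, in the spirit of Sanov's theorem), using that $n\beta_n$ is comparable to $n$ and that $\frac1{n\beta_n}\log(e^{-n\beta_n C}/Z_n)\to\inf F-C$ is finite. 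If $\beta=\infty$ the speed $n\beta_n$ outgrows $n$ and this transfer breaks down; here one uses the \emph{confining sequence} assumption~\ref{confiningsequence}. That assumption implies (arguing separately for sequences with bounded and with unbounded index) that $\mathcal K_A:=\overline{\bigcup_{n}\{\mu\in\mathcal P(M):\tilde W_n(\mu)\le A\}}$ is compact for every $A\in\mathbb R$; taking $K\supseteq\mathcal K_A$, every $x\in M^n$ with $i_n(x)\notin K$ satisfies $W_n(x)=\tilde W_n(i_n(x))>A$, whence
\begin{equation*}
\mathbb P_n\bigl(i_n^{-1}(K^c)\bigr)\le\frac1{Z_n}\int_{\{W_n>A\}}e^{-n\beta_n W_n}\,d\pi^{\otimes_n}\le\frac{e^{-n\beta_n A}}{Z_n},
\end{equation*}
so that $\limsup_n\frac1{n\beta_n}\log\mathbb P_n\bigl(i_n^{-1}(K^c)\bigr)\le\inf F-A\to-\infty$ as $A\to\infty$.

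It remains to check that the rate function produced by this argument is exactly $I$. Exponential tightness together with the convergence of the logarithmic moment functionals yields, via Bryc, a large deviation principle with \emph{some} good rate function $J$, and Varadhan's lemma then forces $\inf_\mu\{f(\mu)+J(\mu)\}=\inf_\mu\{f(\mu)+I(\mu)\}$ for every bounded continuous $f$. Since $D(\cdot\|\pi)$ is lower semicontinuous for the weak topology and $W$ is lower semicontinuous --- the latter following from the lower limit assumption~\ref{lowerlimitassumption} combined with~\ref{upperlimitassumption} (resp.~\ref{regularityassumption}) through a diagonal argument and the law of large numbers for $\mu^{\otimes_n}$ --- the function $I=F-\inf F$ is lower semicontinuous, hence $J=I$. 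Assembling the open\nobreakdash-set lower bound, the closed\nobreakdash-set upper bound and this identification gives the large deviation principle with speed $n\beta_n$ and rate function $I=F-\inf F$.
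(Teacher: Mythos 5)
Your overall strategy is sound and its first half coincides with the paper's: you derive the normalized Laplace principle by applying Theorem~\ref{theorem: laplace} twice, to $f$ and to the zero function, exactly as in the paper's proof. Where you genuinely diverge is in the conversion from Laplace principle to large deviation principle. The paper invokes the Dupuis--Ellis equivalence (their Theorems~1.2.1 and~1.2.3), whose only hypothesis beyond the Laplace principle is that the rate function $I$ has compact level sets; this is checked for $\beta<\infty$ from the compact level sets of $D(\cdot\|\pi)$ plus lower semicontinuity and boundedness below of $W$, and for $\beta=\infty$ from the confining, lower limit and regularity assumptions. You instead prove exponential tightness of the measures $\{i_n(\mathbb P_n)\}$ and appeal to Bryc's inverse Varadhan lemma, then identify the rate function by lower semicontinuity. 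Both routes are legitimate; the paper's buys economy (one citation, and the compactness check is soft because one may \emph{choose} the approximating empirical measures and hence their indices), while yours is more self-contained on the probabilistic side and makes the role of tightness of the laws, rather than of the level sets, explicit. Your $\beta<\infty$ tightness argument via domination by $\pi^{\otimes_n}$ is correct and is essentially the content of the paper's Lemma~\ref{lemma: entropy confining}.

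There is, however, one soft spot in your $\beta=\infty$ argument. You assert that the confining assumption~\ref{confiningsequence} makes $\mathcal K_A=\overline{\bigcup_n\{\mu:\tilde W_n(\mu)\le A\}}$ compact, ``arguing separately for sequences with bounded and with unbounded index.'' The unbounded case is fine: extract a strictly increasing subsequence of indices and apply~\ref{confiningsequence}. But the bounded case reduces to relative compactness of the sublevel set $\{\tilde W_m\le A\}$ for a \emph{fixed} $m$, and the assumption as stated only speaks of increasing index sequences, so it gives no control there; a priori these fixed-$m$ sublevel sets need not be contained in any common compact set, and then your containment $\{i_n\notin K\}\subset\{W_n>A\}$ fails for infinitely many $n$, which is exactly what the $\limsup$ in exponential tightness cannot tolerate. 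The paper's route avoids this because, when showing $\{W\le c\}$ is relatively compact, one approximates each measure by an empirical measure whose index one is free to take strictly increasing. To repair your version you would either need to strengthen the reading of~\ref{confiningsequence} to non-decreasing index sequences (which does hold in all the paper's examples, e.g.\ via Lemma~\ref{boundconfining}), or replace the crude containment by a quantitative tightness bound uniform in $n$. With that point addressed, your proof is complete.
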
	 
\ \\
In the next section, Section \ref{section: kbody},
 we shall study the
usual case of $k$-body interaction.
Section \ref{section: applications} 
will be about
some more specific examples, 
such as
the conditional Gibbs measure, 
the Coulomb gas on a compact Riemannian manifold,
the usual Gibbs measures on a 
noncompact space such as the Euclidean space
and the Gaussian random polynomials.

\section{Example of a stable sequence: k-body interaction}

\label{section: kbody}

We will give 
the most basic non-trivial example of a
{\it stable sequence} \ref{stablesequence}. 
Take
an integer $k > 0$ and
a symmetric lower semicontinuous function
bounded from below
$G : M^k \to ( - \infty, \infty] .$
Define the symmetric
measurable functions
$W_n : M^n \to ( - \infty, \infty]$ by
	$$W_n(x_1,...,x_n) =
	\frac{1}{n^k}
	\sum_{\stackrel{\{i_1,...,i_k\}  
	\subset \{1,...,n\}}
	{ \# \{i_1,...,i_k\} = k}}
	 G(x_{i_1},...,x_{i_k}) .$$
and $W: \mathcal P(M) \to ( - \infty, \infty]$ by
	$$W(\mu) =  \frac{1}{k!}\int_{M^k} G(x_1,...,x_k) 
	d\mu(x_1) ... d\mu(x_k).$$
\begin{proposition}[Stability,
lower and upper limit assumption, 
(A1) and (A2)]
\label{proposition: k-body}
$\{W_n\}_{n \in \mathbb N}$ is a 
stable sequence \ref{stablesequence},
$W$ is lower semicontinuous and
the pair $(\{W_n\}_{n \in \mathbb N}, W)$ satisfies
the {\it lower and upper limit assumption}, 
\ref{lowerlimitassumption} and
\ref{upperlimitassumption}.

\begin{proof}

To see that $\{W_n\}_{n \in \mathbb N}$ 
is a {\it stable sequence}
\ref{stablesequence}
we notice that if $C \leq G$ then 
${n \choose k}\frac{1}{ n^k} C \leq W_n$.
The lower semicontinuity of $W$ is 
a consequence of the lower semicontinuity of
 $G$ and the fact
 that it is bounded from below. Now,
let us prove that $(\{W_n\}_{n \in \mathbb N}, W)$ satisfies
the {\it lower and upper limit assumption},
\ref{lowerlimitassumption} and
\ref{upperlimitassumption}.

$\bullet$ {\bf Lower limit assumption 
\ref{lowerlimitassumption}.} 
Let $\mu \in \mathcal P(M)$. Take
 $N > 0$ and define $G_N = G \wedge N$. 
 We will prove that
\begin{equation} \label{eq: kenergy inequality}
	\tilde W_n(\mu) + 
	\frac{N}{k!\,  n^k} 
	 	\left(n^k
	  - \frac{n!}{(n - k)!} \right)	
	\geq
	 \frac{1 }{k!}\int_{M^k} G_N(x_1,...,x_k) 
	d\mu(x_1) ... d\mu(x_k)
\end{equation}
where $\tilde W_n$ is the extension defined
in (\ref{eq: extension}).	
If $\tilde W_n(\mu) = \infty$ there is nothing to prove.
If $\tilde W_n(\mu) < \infty$ then
$\mu=\frac{1}{n}\sum_{i=1}^n \delta_{x_i}$
for some $(x_1,...,x_n) \in M^n$. We have

	$$\frac{1}{n^k}
	\sum_{\stackrel{\{i_1,...,i_k\}  
	\subset \{1,...,n\}}
	{ \# \{i_1,...,i_k\} = k}}
	 G_N(x_{i_1},...,x_{i_k}) 
	 + \frac{N}{k!\,  n^k} 
	 	\left(n^k
	  - \frac{n!}{(n - k)!} \right)
	\geq \frac{1 }{k!}\int_{M^k} G_N(x_1,...,x_k) 
	d\mu(x_1) ... d\mu(x_k),$$
which due to the fact that $G \geq G_N$ implies the
inequality (\ref{eq: kenergy inequality}).
	
Let $\mu_n \to \mu \in \mathcal P(M)$. 
Then, using the inequality 
(\ref{eq: kenergy inequality}) and taking
the lower limit
we get
\begin{align*}
	\liminf_{n \to \infty} \tilde W_n(\mu_n) &\geq
	   \frac{1 }{k!}\int_{M^k} G_N(x_1,...,x_k) 
	d\mu(x_1) ... d\mu(x_k) ,
\end{align*}		
where we have used that $G_N$ is lower semicontinuous
and bounded from below.
Finally, as $G$ is bounded from below we can take
$N$ to infinity and use
the monotone convergence theorem to get
	$$\liminf_{n \to \infty} \tilde W_n(\mu_n) 
	 \geq
	\frac{1 }{k!}\int_{M^k} G(x_1,...,x_k) 
	d\mu(x_1) ... d\mu(x_k) .$$

$\bullet$ {\bf Upper limit assumption
\ref{upperlimitassumption}.}
For this it is enough to take $\mu \in \mathcal P(M)$
and notice that
	$$ \mathbb E_{\mu^{\otimes_n}}  [W_n]  =
		\frac{1}{n^k}{n \choose k}\int_{M^k} G(x_1,...,x_k) 
	d\mu(x_1) ... d\mu(x_k). $$

\end{proof}

\end{proposition}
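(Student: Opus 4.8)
The statement bundles four claims of different character, so the plan is to dispatch them in turn, reserving most effort for the last. Stability is immediate: if $C\le G$ then each of the ${n \choose k}$ summands defining $W_n$ is at least $C$, hence $W_n\ge {n \choose k}n^{-k}C$, and since ${n \choose k}n^{-k}\le 1/k!$ for every $n$ this bound is uniform ($W_n\ge 0$ when $C\ge 0$, $W_n\ge C/k!$ when $C<0$), which is \ref{stablesequence}. For the lower semicontinuity of $W$ I would combine two standard facts: if $\mu_n\to\mu$ in $\mathcal P(M)$ then $\mu_n^{\otimes k}\to\mu^{\otimes k}$ in $\mathcal P(M^k)$, and a lower semicontinuous function on a Polish space that is bounded below is the pointwise supremum of an increasing sequence of bounded continuous functions; together with monotone convergence and the portmanteau theorem these give $\liminf_n\int_{M^k}G\,d\mu_n^{\otimes k}\ge\int_{M^k}G\,d\mu^{\otimes k}$, i.e.\ $W$ is lower semicontinuous. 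The same argument applies to every truncation $G_N:=G\wedge N$, which is again lower semicontinuous (because $\{G_N>a\}=\{G>a\}\cap\{N>a\}$ is open) and now bounded, so $\mu\mapsto\int_{M^k}G_N\,d\mu^{\otimes k}$ is lower semicontinuous too; this will be the workhorse for \ref{lowerlimitassumption}.

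The upper limit assumption \ref{upperlimitassumption} is a short computation. Taking $X_1,\dots,X_n$ i.i.d.\ with law $\mu$, every summand of $W_n$ involves $k$ distinct indices, so $\mathbb E_{\mu^{\otimes_n}}[W_n]={n \choose k}n^{-k}\int_{M^k}G\,d\mu^{\otimes k}$. If $\int_{M^k}G\,d\mu^{\otimes k}=\infty$ there is nothing to prove; otherwise ${n \choose k}n^{-k}\to 1/k!$ gives $\mathbb E_{\mu^{\otimes_n}}[W_n]\to W(\mu)$, and in particular $\limsup_n\mathbb E_{\mu^{\otimes_n}}[W_n]\le W(\mu)$.

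The lower limit assumption \ref{lowerlimitassumption} is where the real work lies, and the step I expect to be the main obstacle: $W_n$ drops the ``diagonal'' tuples that $\int_{M^k}(\cdot)\,d\mu^{\otimes k}$ retains, and since $G$ is bounded only from below these tuples cannot be discarded directly. The remedy is truncation. For atomic $\mu=\frac1n\sum_{i=1}^n\delta_{x_i}$ one has $\frac1{k!}\int_{M^k}G_N\,d\mu^{\otimes k}=\frac1{k!\,n^k}\sum_{(i_1,\dots,i_k)\in\{1,\dots,n\}^k}G_N(x_{i_1},\dots,x_{i_k})$; separating the $\frac{n!}{(n-k)!}$ tuples with pairwise distinct entries — which by symmetry of $G$ and $G\ge G_N$ contribute at most $\tilde W_n(\mu)$ — from the remaining $n^k-\frac{n!}{(n-k)!}$ tuples, on each of which $G_N\le N$, yields
\[
\tilde W_n(\mu)+\frac{N}{k!\,n^k}\Bigl(n^k-\frac{n!}{(n-k)!}\Bigr)\ \ge\ \frac1{k!}\int_{M^k}G_N\,d\mu^{\otimes k},
\]
which also holds trivially when $\tilde W_n(\mu)=\infty$. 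Given $\mu_n\to\mu$, I would apply this with $\mu=\mu_n$ and take $\liminf_n$: the correction term vanishes since $\frac{n!}{(n-k)!}n^{-k}\to 1$, while the right-hand side has $\liminf$ at least $\frac1{k!}\int_{M^k}G_N\,d\mu^{\otimes k}$ by the lower semicontinuity noted above, so $\liminf_n\tilde W_n(\mu_n)\ge\frac1{k!}\int_{M^k}G_N\,d\mu^{\otimes k}$. Finally, letting $N\to\infty$ and applying monotone convergence to $G_N-C\ge 0$ promotes this to $\liminf_n\tilde W_n(\mu_n)\ge W(\mu)$, which is \ref{lowerlimitassumption}.
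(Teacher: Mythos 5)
Your proposal is correct and follows essentially the same route as the paper: the same truncation $G_N=G\wedge N$, the same key inequality comparing $\tilde W_n(\mu)$ with $\frac1{k!}\int_{M^k}G_N\,d\mu^{\otimes k}$ via the count of diagonal versus off-diagonal tuples, lower semicontinuity of the truncated energy to pass to the limit, and monotone convergence in $N$. You merely spell out the counting step behind the key inequality a bit more explicitly than the paper does.
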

\ \\
Now we give a sufficient condition
for a $k$-body interaction
to be a {\it confining sequence}
\ref{confiningsequence}.

\begin{proposition}[$k$-body interaction and confining
assumption]
\label{proposition: kbodyconfining}
Suppose $G(x_1,...,x_k)$ tends to infinity
when $x_i \to \infty$ for all $i \in \{1,...,k\}$,
i.e.  suppose that for every $C \in \mathbb R$ there exists 
a compact set $K$ such that 
$G|_{K^c \times ... \times K^c} \geq C$. Then
 $\{W_n\}_{n \in \mathbb N}$
is a {\it confining sequence} \ref{confiningsequence}.
\begin{proof}
Without loss of generality 
we can suppose $G$ positive. 
Remember the definition of $\tilde W_n$
in (\ref{eq: extension}). All we need
is the following result.

\begin{lemma}[Bound on the number of particles
outside a compact set]

\label{boundconfining}

Suppose that $G$ is positive.
Take $n \in \mathbb N$, 
$A \in \mathbb R$ and $\mu \in \mathcal P(M)$ 
that satisfies
	$$\tilde W_n(\mu) \leq A.$$
If $K$ is a compact set such that
$G|_{K^c \times ... \times K^c} \geq C$ 
with $C>0$, then
	$$\mu(K^c) \leq
	\left(\frac{A}{C} \, k! \right)^{1/k} + 
	\frac{k}{n}.$$

\begin{proof}

We first
notice that
 $\mu = \frac{1}{n}\sum_{i=1}^n \delta_{x_i}$
 for some $(x_1,...,x_n) \in M^n$.
By the hypotheses we can see that
\begin{equation} \label{eq: boundonkcombinations}
	 \frac{1}{n^k}
	[\mbox{number of $k$-combinations outside $K$}]
	 \, C
	\leq 
	\frac{1}{n^k}
	\sum_{\stackrel{\{i_1,...,i_k\}  \subset \{1,...,n\}}
	{ \# \{i_1,...,i_k\} = k}}
	 G(x_{i_1},...,x_{i_k}) \leq A.
\end{equation}
where, more precisely,
$[\mbox{number of $k$-combinations outside $K$}]$ 
denotes
the cardinal of the following set,
$\{S  \subset \{1,...,n\}: \
\# S = k \mbox{ and }
\forall i \in S, \, x_{i} \notin K  \}$.
But, if $m$ denotes the
number of points among $x_1,...,x_n$ outside $K$
and if $k \leq m$, we have
	$$\frac{(m-k)^k }{k!} \leq
	\frac{m!}{(m-k)!\, k!}
	=
	[\mbox{number of $k$-combinations outside $K$}]$$
which, along with the inequality 
(\ref{eq: boundonkcombinations}),
implies
	$$\frac{m}{n} \leq 
		\left(\frac{A}{C} \, k! \right)^{1/k}
		 + \frac{k}{n}.$$
As $\mu = \frac{1}{n}\sum_{i=1}^n \delta_{x_i}$ then
	$\mu(K^c) = \frac{m}{n}$ which concludes the proof. 
		
\end{proof}

\end{lemma}

Then we can conclude
using Prokhorov's theorem and the fact
that every single probability measure is tight.

\end{proof}
\end{proposition}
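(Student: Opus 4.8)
The plan is to reduce the \emph{confining sequence} property to a single uniform tightness estimate for atomic configurations of bounded energy, and then close the argument with Prokhorov's theorem.

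First I would normalize the sign of $G$. Since $G$ is bounded from below, pick a constant $M$ with $G' := G + M \geq 0$; clearly $G'$ still diverges outside compact sets in the sense of the hypothesis. The associated energies satisfy $W_n' = W_n + \binom{n}{k} M / n^k$, and because $0 \leq \binom{n}{k}/n^k \leq 1/k!$, the two sequences differ by a uniformly bounded amount. Hence a bound $\tilde W_{n_j}(\mu_j) \leq A$ turns into a bound $\tilde W_{n_j}'(\mu_j) \leq A + M/k!$, and relative compactness of $\{\mu_j\}_{j}$ does not depend on which of the two we use; so I may assume $G \geq 0$ from the outset, which is exactly the normalization under which the auxiliary estimate below is stated.

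The core of the argument is a combinatorial inequality. If $\tilde W_n(\mu) \leq A$ then $\tilde W_n(\mu)$ is finite, so by definition $\mu = \frac1n \sum_{i=1}^n \delta_{x_i}$ for some $(x_1,\dots,x_n) \in M^n$. Given $C > 0$, use the hypothesis to fix a compact $K$ with $G \geq C$ on $K^c \times \cdots \times K^c$, and let $m$ be the number of indices $i$ with $x_i \notin K$. Each of the $\binom{m}{k}$ many $k$-subsets of those indices contributes a term $\geq C$ to $n^k W_n(x_1,\dots,x_n)$, whence
$$ \binom{m}{k}\, \frac{C}{n^k} \;\leq\; W_n(x_1,\dots,x_n) \;\leq\; A . $$
For $m \geq k$ one has $\binom{m}{k} \geq (m-k)^k/k!$, so rearranging gives $m/n \leq (Ak!/C)^{1/k} + k/n$; for $m < k$ this is trivial. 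Since $\mu(K^c) = m/n$, this is the quantitative tightness bound I am after, uniform in the particular configuration.

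Finally I would assemble the pieces. Let $\{n_j\}_j$ be increasing and $\{\mu_j\}_j$ with $\tilde W_{n_j}(\mu_j) \leq A$, and fix $\varepsilon > 0$. Choose $C$ large enough that $(Ak!/C)^{1/k} < \varepsilon/2$ and take the associated compact $K$; then $\mu_j(K^c) < \varepsilon/2 + k/n_j < \varepsilon$ for all but finitely many $j$. The remaining finitely many $\mu_j$ are each individually tight, so replacing $K$ by a larger compact set — which only reinforces the inequality $G \geq C$ off it — absorbs them as well. Thus $\{\mu_j\}_j$ is uniformly tight, hence relatively compact in $\mathcal P(M)$ by Prokhorov's theorem, which is precisely the \emph{confining sequence} condition. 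The only genuine subtlety is organizational: making the compact set depend only on $\varepsilon$ and not on $j$, and handling separately the finitely many small indices $n_j$; everything else is elementary.
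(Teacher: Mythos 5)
Your proof is correct and follows essentially the same route as the paper: reduce to $G \geq 0$, derive the combinatorial bound $\mu(K^c) \leq (Ak!/C)^{1/k} + k/n$ by counting $k$-subsets of points outside $K$, and conclude uniform tightness via Prokhorov's theorem together with the tightness of the finitely many remaining measures. You merely make explicit two steps the paper leaves to the reader — the justification of the normalization $G \mapsto G + M$ and the final assembly into a single compact set — which is a welcome elaboration rather than a departure.
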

\ \\
Finally we notice that in the 
{\it regularity assumption} 
\ref{regularityassumption}
we can replace finite entropy by
absolute continuity with respect to $\pi$.

\begin{proposition}
[$k$-body interaction and regularity assumption]
\label{proposition: kbodyregularity}

Let
\begin{align*}
	\mathcal N_1 &=
	\left\{\mu \in \mathcal P(M) :\
	D(\mu \| \pi) < \infty
		 \right\} \ \mbox{ and} \\
	\mathcal N_2 & =
	\left\{\mu \in \mathcal P(M) :\
	\mu 
	\mbox{ is absolutely continuous with respect to }
	 \pi
	 \right\}.
\end{align*}	 
Suppose that for
every $\mu$ with $W(\mu) < \infty$, there exists
a sequence $\{\mu_n \}_{n\in \mathbb N}$
in $\mathcal N_2$ such that
$\mu_n \to \mu$ and $W(\mu_n) \to W(\mu)$
then the same is true if we replace $\mathcal N_2$ by
$\mathcal N_1$. 

\begin{proof}
It is enough to prove that 
for every $\mu \in \mathcal N_2$ 
there exists a sequence
$\{\mu_n\}_{n \in \mathbb N}$
in $\mathcal N_1$ such that $\mu_n \to \mu$
and $W(\mu_n) \to W(\mu)$. 
Let $\rho$ be the density of $\mu$
with respect to $\pi$, i.e. $d\mu = \rho \, d\pi$.
For each $n > 0$ define $\mu_n \in \mathcal N_1$ by 
$d \mu_n = \frac{\rho \wedge n\,  d \pi}
{ \int_M \rho \wedge n \,  d \pi}$.
Then, by the monotone
convergence theorem we can see that
$\mu_n \to \mu$. And, again, by the monotone convergence
 theorem, by supposing $G \geq 0$, we can see
 that $W(\mu_n) \to W(\mu)$.

\end{proof}
\end{proposition}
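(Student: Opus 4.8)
The plan is to reduce the proposition to a single approximation fact and then diagonalize. Since $\mathcal{P}(M)$ is Polish, fix a metric $d$ inducing its topology. Given $\mu \in \mathcal{P}(M)$ with $W(\mu) < \infty$, the hypothesis provides a sequence $\{\nu_m\}_{m \in \mathbb{N}}$ in $\mathcal{N}_2$ with $\nu_m \to \mu$ and $W(\nu_m) \to W(\mu)$; discarding finitely many indices we may assume $W(\nu_m) < \infty$ for all $m$. The claim I would then isolate is: \emph{for every $\nu \in \mathcal{N}_2$ with $W(\nu) < \infty$ there is a sequence in $\mathcal{N}_1$ converging to $\nu$ along which $W$ converges to $W(\nu)$.} Granting this, for each $m$ I would pick $\mu_{m,k} \in \mathcal{N}_1$ with $\mu_{m,k} \to \nu_m$ and $W(\mu_{m,k}) \to W(\nu_m)$ as $k \to \infty$, then choose $k(m)$ with $d(\mu_{m,k(m)}, \nu_m) < 1/m$ and $|W(\mu_{m,k(m)}) - W(\nu_m)| < 1/m$; the diagonal sequence $\{\mu_{m,k(m)}\}_{m \in \mathbb{N}}$ lies in $\mathcal{N}_1$, converges to $\mu$, and satisfies $W(\mu_{m,k(m)}) \to W(\mu)$, as desired.

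To prove the isolated claim, I would first observe that replacing $G$ by $G - C$ (with $C \le G$) shifts $W$ by the additive constant $-C/k!$ and leaves the classes $\mathcal{N}_1, \mathcal{N}_2$ untouched, so there is no loss in assuming $G \ge 0$. Given $\nu \in \mathcal{N}_2$, write $\rho = d\nu/d\pi$ and, for $n \ge 1$, set $c_n = \int_M (\rho \wedge n)\, d\pi$; since $\rho \wedge n$ increases pointwise to $\rho$ and is dominated by $\rho \in L^1(\pi)$, monotone convergence gives $c_n \uparrow 1$, and $c_n > 0$ for every $n$ because $\rho$ is not $\pi$-a.e. zero. Define $\mu_n$ by $d\mu_n = c_n^{-1}(\rho \wedge n)\, d\pi$. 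Its density is bounded by $n/c_n$, so $D(\mu_n \| \pi) \le \log(n/c_n) < \infty$ and hence $\mu_n \in \mathcal{N}_1$. For bounded continuous $f$ one has $\int f\, d\mu_n = c_n^{-1}\int f\,(\rho \wedge n)\, d\pi \to \int f\, \rho\, d\pi = \int f\, d\nu$ by dominated convergence with dominating function $\|f\|_\infty \rho$, so $\mu_n \to \nu$ in the weak topology.

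Finally I would check $W(\mu_n) \to W(\nu)$. Since $d\mu_n^{\otimes k} = c_n^{-k}(\rho \wedge n)^{\otimes k}\, d\pi^{\otimes k}$ and $(\rho \wedge n)^{\otimes k}$ increases pointwise to $\rho^{\otimes k}$ on $M^k$, monotone convergence applied to the nonnegative integrand $G$ yields $\int_{M^k} G\,(\rho \wedge n)^{\otimes k}\, d\pi^{\otimes k} \to \int_{M^k} G\, \rho^{\otimes k}\, d\pi^{\otimes k}$; combining with $c_n^{-k} \to 1$ gives $W(\mu_n) \to \frac{1}{k!}\int_{M^k} G\, \rho^{\otimes k}\, d\pi^{\otimes k} = W(\nu)$. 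I do not anticipate a genuine obstacle here: the whole argument is a truncation-and-renormalization, and the only points that deserve a moment's care are the positivity and convergence of the normalizers $c_n$, the reduction to $G \ge 0$, and the bookkeeping of the diagonal extraction.
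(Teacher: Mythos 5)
Your proposal is correct and follows essentially the same route as the paper's proof: truncate the density at level $n$, renormalize, and use monotone convergence for both the weak convergence and the convergence of $W$, after reducing to $G\ge 0$. The only difference is that you spell out the diagonal extraction and the estimates on the normalizing constants, which the paper leaves implicit in its ``it is enough to prove'' reduction.
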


\section{Proof of the theorem} 

\label{section: proof theorems}

This section is dedicated to the proof of the main
theorem, i.e. 
Theorem \ref{theorem: laplace}.
We start giving a sketch of the proof.

\subsection{Idea of the proof}
\label{idea}

We shall use the 
following very known result that
tells us the Legendre transform of $D(\cdot \| \mu)$,
defined in (\ref{eq: entropy}).
See \cite[Proposition 4.5.1]{bdupuis} for a proof.
\begin{lemma}[Legendre transform of the entropy]
\label{lemma: legendre}
	Let $E$ be a Polish probability space,
	$\mu$ a probability measure on $E$
	 and $g:E \to ( - \infty, \infty]$
	 a measurable function bounded from below.
	 Then
	$$\log \mathbb E_\mu \left[e^{-g} \right] = 
	- \inf_{\tau \in \mathcal P(E)} 
	\left\{ \mathbb E_\tau \left[ g \right] + 
	D (\tau \| \mu)\right\}.$$
\end{lemma}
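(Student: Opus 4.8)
The plan is to prove the two inequalities separately, the central device in both being the \emph{tilted} (Gibbs) probability measure $\tau_0$ with $d\tau_0 = e^{-g}\, d\mu / \mathbb E_\mu[e^{-g}]$. First I would dispose of the degenerate case: since $g$ is bounded below, $e^{-g}$ is bounded above and $\mathbb E_\mu[e^{-g}] \in [0,\infty)$; if $\mathbb E_\mu[e^{-g}] = 0$ then $g = +\infty$ $\mu$-almost everywhere, the left-hand side is $-\infty$, and for every $\tau \in \mathcal P(E)$ one has $\mathbb E_\tau[g] + D(\tau\|\mu) = +\infty$ (either $\tau$ fails to be absolutely continuous with respect to $\mu$, or it is and then $g = +\infty$ $\tau$-a.e.), so the right-hand side is $-\infty$ as well. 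Hence I may assume $0 < \mathbb E_\mu[e^{-g}] < \infty$, so that $\tau_0$ is well defined.

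For the inequality $\log \mathbb E_\mu[e^{-g}] \le -\big(\mathbb E_\tau[g] + D(\tau\|\mu)\big)$ for \emph{every} $\tau \in \mathcal P(E)$, I would argue as follows. If $\tau$ is not absolutely continuous with respect to $\mu$ the right-hand side is $+\infty$ and there is nothing to prove; otherwise $\tau \ll \mu$, hence $\tau \ll \tau_0$ with $\frac{d\tau}{d\tau_0} = \frac{d\tau}{d\mu}\, e^{g}\, \mathbb E_\mu[e^{-g}]$. Since $g \ge C$ we have $\mathbb E_\tau[g] \ge C > -\infty$, and $D(\tau\|\mu) = \mathbb E_\tau\big[\log\tfrac{d\tau}{d\mu}\big] \ge 0$ because relative entropy is non-negative, so no $\infty - \infty$ cancellation can occur and the integrals may be regrouped freely:
\begin{align*}
\mathbb E_\tau[g] + D(\tau\|\mu)
&= \mathbb E_\tau\!\left[ g + \log\frac{d\tau}{d\mu} \right]
= \mathbb E_\tau\!\left[ \log\frac{d\tau}{d\tau_0} \right] - \log \mathbb E_\mu[e^{-g}] \\
&= D(\tau\|\tau_0) - \log \mathbb E_\mu[e^{-g}] \;\ge\; - \log \mathbb E_\mu[e^{-g}],
\end{align*}
using once more the non-negativity of relative entropy. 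This gives $-\inf_{\tau}\{\cdots\} \le \log\mathbb E_\mu[e^{-g}]$.

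For the reverse inequality it suffices to evaluate the functional at $\tau = \tau_0$. The key elementary observation is that $t \mapsto t e^{-t}$ is bounded on $[C,\infty)$, so $g\, e^{-g}$ is $\mu$-integrable and $\mathbb E_{\tau_0}[g] = \mathbb E_\mu[g\, e^{-g}] / \mathbb E_\mu[e^{-g}] < \infty$. Then $\log\frac{d\tau_0}{d\mu} = -g - \log\mathbb E_\mu[e^{-g}]$, whence $D(\tau_0\|\mu) = -\mathbb E_{\tau_0}[g] - \log\mathbb E_\mu[e^{-g}]$, a finite quantity, and therefore $\mathbb E_{\tau_0}[g] + D(\tau_0\|\mu) = -\log\mathbb E_\mu[e^{-g}]$. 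Hence $\inf_{\tau}\{\cdots\} \le -\log\mathbb E_\mu[e^{-g}]$, and combined with the previous paragraph the two sides agree; in fact the infimum is attained at $\tau_0$.

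The only real subtlety — and the one place where I would be careful — is the bookkeeping of $+\infty$ values when regrouping the integrals of the terms $g$, $\log\frac{d\tau}{d\mu}$ and $\log\frac{d\tau}{d\tau_0}$: the hypothesis that $g$ is bounded below, together with the non-negativity of relative entropy, is exactly what rules out indeterminate forms and legitimises every manipulation above. Everything else is the standard Gibbs variational principle; the Polish structure of $E$ plays no role beyond guaranteeing the usual Radon–Nikodym machinery.
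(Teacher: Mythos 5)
Your proof is correct and is the standard Gibbs-variational-principle argument via the tilted measure $\tau_0$; the paper does not prove this lemma itself but simply cites Dupuis--Ellis, Proposition 4.5.1, whose proof is this same computation. The only imprecision is the claim that $\tau \ll \mu$ implies $\tau \ll \tau_0$ --- this fails when $\tau(\{g = +\infty\}) > 0$, since $d\tau_0/d\mu$ vanishes on that set --- but there $\mathbb E_\tau[g] = +\infty$ and the desired inequality is trivial, so the argument survives intact.
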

\ \\
Remember the definition
of $\gamma_n$ in (\ref{eq: gibbs}) and $F$ in 
(\ref{eq: free}).
  With the help of Lemma \ref{lemma: legendre} 
  we can write
\begin{align*}
\frac{1}{n\beta_n} \log \,  \int_{M^n} 
	 e^{- n\beta_n f \circ i_n} d \gamma_n 
&= \frac{1}{n\beta_n} \log \,  \mathbb E_{\pi^{\otimes_n} } 
	\left[ e^{- n\beta_n \left( f \circ \, i_n +  W_n \right)}\right]
		\\
	&=		
	- \inf_{\tau \in \mathcal P(M^n)}
	 \left\{ \mathbb E_{i_n(\tau)} \left[  f \right] + 
	 \mathbb E_\tau 
	  \left[ W_n  \right] + \frac{1}{n\beta_n}D (\tau \| \pi^{\otimes_n})\right\}.
\end{align*}
where $i_n(\tau)$ denotes the pushforward 
measure of
$\tau$ by $i_n$.
So, we need to prove that
\begin{equation} \label{eq: limit to prove}
  \inf_{\tau \in \mathcal P(M^n)}
	 \left\{ \mathbb E_{i_n(\tau)} \left[  f \right] + 
	 \mathbb E_\tau 
	  \left[ W_n  \right] + \frac{1}{n\beta_n}D (\tau \| \pi^{\otimes_n})\right\}
	 \xrightarrow[\: n \to \infty \:]{}
 	\inf_{\mu \in \mathcal P(M) } \{ f \left( \mu \right) +
	 F \left( \mu \right) \}.
\end{equation}	 
\subsection{Proof of 
Theorem \ref{theorem: laplace}:
Case of finite 
$\beta$ }	

In this subsection we shall prove
the Laplace principle Theorem \ref{theorem: laplace}
 and the
large deviation principle 
 Corollary \ref{cor: deviations}
for the case of finite $\beta$.

To prove this we need the following properties
of the entropy. The first
one is analogous to the  
{\it lower limit assumption} \ref{lowerlimitassumption}.

\begin{lemma}[Lower limit property of the entropy]
\label{lemma: entropy lower limit}
Let $\{n_j\}_{j \in \mathbb N}$
 be an increasing sequence in $\mathbb N$.  
 For each $j \in \mathbb N$ 
 take $\tau_j \in \mathcal P(M^{n_j})$.
 If $i_{n_j}(\tau_j) 
 \to \zeta \in \mathcal P( \mathcal P(M) )$,
 then
 $$\mathbb E_{\zeta} \left[ D \left(\cdot | \pi \right )
	 \right]  \leq 
	\liminf_{j \to \infty}  
	\frac{1}{n_j} D (\tau_j \| \pi^{\otimes_{n_j}}) .$$
\begin{proof}	
The idea of the proof is presented
in \cite{bdupuis}. We can also see
\cite{david}. It can be
seen as equivalent to the 
large deviation upper bound of Sanov's theorem
thanks to \cite[Theorem 3.5]{mauro}.
\end{proof}	
\end{lemma}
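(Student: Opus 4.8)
\textbf{Proof proposal for Lemma \ref{lemma: entropy lower limit}.}

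The plan is to reduce this ``lower limit property'' to a statement about the entropy of empirical measures that can be attacked either via Sanov's theorem or, more self-containedly, via the variational characterisation of relative entropy. First I would fix notation: write $\pi^{\otimes_{n_j}}$ on $M^{n_j}$ and, pushing forward by $i_{n_j}$, let $\eta_j = i_{n_j}(\pi^{\otimes_{n_j}}) \in \mathcal P(\mathcal P(M))$ be the law of the empirical measure of $n_j$ i.i.d.\ $\pi$-points. The key reduction is that, by the data-processing inequality for relative entropy (entropy can only decrease under a measurable map),
$$ D\bigl(i_{n_j}(\tau_j) \,\|\, \eta_j\bigr) \leq D\bigl(\tau_j \,\|\, \pi^{\otimes_{n_j}}\bigr). $$
So it suffices to show two things: (i) $\liminf_{j\to\infty} \frac{1}{n_j} D\bigl(i_{n_j}(\tau_j)\,\|\,\eta_j\bigr) \geq \mathbb E_\zeta[D(\cdot\,\|\,\pi)]$ whenever $i_{n_j}(\tau_j)\to\zeta$; and (ii) the full strength of the lemma actually requires the reverse inequality direction to be harmless, i.e.\ the loss in (i) is exactly captured by Sanov. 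Concretely, Sanov's theorem says $\{\eta_j\}$ satisfies an LDP on $\mathcal P(M)$ with speed $n_j$ and good rate function $D(\cdot\,\|\,\pi)$; the cited \cite[Theorem 3.5]{mauro} (a Laplace-principle/Varadhan-type transfer) upgrades this to: for any sequence of probability measures $\zeta_j \to \zeta$ on $\mathcal P(M)$,
$$ \liminf_{j\to\infty} \frac{1}{n_j} D(\zeta_j \,\|\, \eta_j) \geq \int_{\mathcal P(M)} D(\mu\,\|\,\pi)\, d\zeta(\mu) = \mathbb E_\zeta\bigl[D(\cdot\,\|\,\pi)\bigr]. $$
Applying this with $\zeta_j = i_{n_j}(\tau_j)$ and chaining with the data-processing inequality gives the lemma.

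In the order I would carry it out: Step 1, record the data-processing inequality $D(i_{n_j}(\tau_j)\,\|\,i_{n_j}(\pi^{\otimes_{n_j}})) \leq D(\tau_j\,\|\,\pi^{\otimes_{n_j}})$, which is immediate from the variational formula $D(\alpha\,\|\,\beta) = \sup_{g}\{\mathbb E_\alpha[g] - \log\mathbb E_\beta[e^g]\}$ restricted to $g$ of the form $h\circ i_{n_j}$. Step 2, invoke Sanov's theorem for $\eta_j = i_{n_j}(\pi^{\otimes_{n_j}})$: this is classical on Polish spaces (Dembo--Zeitouni, or \cite{bdupuis}), and it is precisely the place where the hypothesis that $M$ is Polish and $\mathcal P(M)$ carries the weak topology is used. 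Step 3, apply the transfer lemma from \cite{mauro} relating the LDP for a sequence to the asymptotic lower bound for relative entropies along weakly convergent sequences; alternatively one can prove the needed inequality directly: writing $D(\zeta_j\,\|\,\eta_j) = \sup_{\Phi}\{\mathbb E_{\zeta_j}[\Phi] - \log\mathbb E_{\eta_j}[e^{\Phi}]\}$ over bounded continuous $\Phi: \mathcal P(M)\to\mathbb R$, choose $\Phi = n_j \phi$ for bounded continuous $\phi$, use weak convergence $\zeta_j\to\zeta$ to handle the first term and Varadhan's lemma (consequence of Sanov) for $\frac{1}{n_j}\log\mathbb E_{\eta_j}[e^{n_j\phi}] \to \sup_\mu\{\phi(\mu) - D(\mu\,\|\,\pi)\}$, then optimise over $\phi$ and use that $D(\cdot\,\|\,\pi)$ is the Legendre dual of $\phi\mapsto\sup_\mu\{\phi(\mu)-D(\mu\,\|\,\pi)\}$ on the (convex, lower-semicontinuous) side. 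Step 4, conclude by combining Steps 1 and 3.

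The main obstacle is Step 3: turning the Sanov LDP into the precise lower bound $\liminf \frac{1}{n_j} D(\zeta_j\,\|\,\eta_j) \geq \mathbb E_\zeta[D(\cdot\,\|\,\pi)]$ for \emph{weakly convergent} $\zeta_j$ rather than for a fixed $\zeta$. The subtlety is that relative entropy is jointly lower semicontinuous in the weak topology, but one needs a genuinely uniform statement; the cleanest route is the dual/variational argument sketched above, where the only analytic input beyond Sanov is that $\phi\mapsto\sup_{\mu}\{\phi(\mu)-D(\mu\,\|\,\pi)\}$ and $D(\cdot\,\|\,\pi)$ form a Legendre pair on $\mathcal P(M)$ with its weak topology — a standard fact since $D(\cdot\,\|\,\pi)$ is convex, lower semicontinuous, and has compact sublevel sets (goodness of the Sanov rate function). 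Since the paper is content to cite \cite{bdupuis}, \cite{david}, and \cite{mauro} here, I would keep the write-up short: state the data-processing reduction explicitly, then defer the core inequality to those references exactly as the author does.
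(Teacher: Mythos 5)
Your proposal is correct, and it fleshes out one of the two routes the paper merely cites: the reduction to Sanov's theorem via Mariani's $\Gamma$-convergence equivalence. The one genuinely necessary ingredient you supply that the citation glosses over is the contraction (data-processing) step $D(i_{n_j}(\tau_j)\,\|\,i_{n_j}(\pi^{\otimes_{n_j}}))\leq D(\tau_j\,\|\,\pi^{\otimes_{n_j}})$, which is exactly what lets you pass from measures on the varying spaces $M^{n_j}$ to measures on the fixed space $\mathcal P(M)$ where Sanov and the transfer theorem live; that step is in the right direction for the claimed lower bound and is correctly justified by restricting the variational formula to test functions factoring through $i_{n_j}$. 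Two small remarks. First, in your Step 3 the ``Legendre pair'' worry is a red herring: once you have $\liminf_j \frac{1}{n_j}D(\zeta_j\|\eta_j)\geq \mathbb E_\zeta[\phi]-\sup_\mu\{\phi(\mu)-D(\mu\|\pi)\}$ for every bounded continuous $\phi$, it suffices to take $\phi=I_k$ where $I_k\uparrow D(\cdot\|\pi)$ is an increasing sequence of bounded continuous functions (available since $D(\cdot\|\pi)$ is lower semicontinuous and nonnegative on a metric space); then $\sup_\mu\{I_k(\mu)-D(\mu\|\pi)\}\leq 0$ and monotone convergence finishes it — no convexity or duality theory is needed, only the Laplace-principle upper bound from Sanov. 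Second, the paper's primary reference (Dupuis--Ellis, also the author's longer arXiv version) proves the same inequality by a different and more elementary mechanism: the chain rule $D(\tau_j\|\pi^{\otimes_{n_j}})=\sum_i \mathbb E_{\tau_j}[D(\tau_j^{(i)}(\cdot\mid x_1,\dots,x_{i-1})\,\|\,\pi)]$, convexity of relative entropy to compare the average of the conditional laws with the empirical measure, and lower semicontinuity of $D(\cdot\|\pi)$; that route avoids invoking Sanov altogether (indeed it is how Sanov's upper bound is derived in that framework), whereas yours treats Sanov as a black box and is shorter if one is willing to cite it.
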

\ \\
And the second one is analogous to the 
notion of {\it confining sequence}
 \ref{confiningsequence}.

\begin{lemma}[Confining property of the entropy]
\label{lemma: entropy confining}
Let $\{n_j\}_{j \in \mathbb N}$
 be an increasing sequence in $\mathbb N$.  
 For each $j \in \mathbb N$ 
 take $\tau_j \in \mathcal P(M^{n_j})$. If
 there exists a real constant $C$ such that
 $$\frac{1}{n_j}D(\tau_j \| \pi^{\otimes_{n_j}} )
  \leq C$$
 for every $j \in \mathbb N$, then the sequence 
 $\{i_{n_j}(\tau_j)\}_{j \in \mathbb N}$ is tight.
\begin{proof} 
The idea of the proof is presented
in \cite{bdupuis}. We can also see
\cite{david}. It can be
seen as equivalent to the 
exponential tightness in Sanov's theorem
thanks to \cite[Theorem 3.3]{mauro}.
\end{proof}
\end{lemma}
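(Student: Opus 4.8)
The plan is to prove this \emph{exponential tightness} directly, essentially reproving the relevant half of Sanov's theorem instead of quoting it. Since $\mathcal P(M)$ is Polish, Prokhorov's theorem reduces the claim to producing, for each $\varepsilon>0$, a \emph{compact} set $\mathcal K_\varepsilon\subset\mathcal P(M)$ with $i_{n_j}(\tau_j)(\mathcal K_\varepsilon)\ge 1-\varepsilon$ for every $j$. I would take
$$\mathcal K_\varepsilon=\Big\{\mu\in\mathcal P(M):\ \mu(K_m^c)\le \delta_m\ \text{for all }m\ge 1\Big\},$$
where $\delta_m\downarrow 0$ and each $K_m\subset M$ is compact; such $K_m$ with $\pi(K_m^c)$ as small as desired exist because $\pi$, being a Borel probability measure on a Polish space, is tight. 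Each set being intersected is closed, since $\mu\mapsto\mu(K_m^c)$ is lower semicontinuous ($K_m^c$ is open), so $\mathcal K_\varepsilon$ is closed; and it is tight because $\delta_m\to 0$; hence it is compact.

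For a point $(x_1,\dots,x_{n_j})\in M^{n_j}$ one has $i_{n_j}(x_1,\dots,x_{n_j})(K_m^c)=\frac1{n_j}\#\{i:x_i\notin K_m\}$, whence
$$i_{n_j}(\tau_j)(\mathcal K_\varepsilon^c)\ \le\ \sum_{m\ge 1}\tau_j(A_{j,m}),\qquad A_{j,m}:=\Big\{\tfrac1{n_j}\sum_{i=1}^{n_j}\mathbf 1_{K_m^c}(x_i)>\delta_m\Big\}\subset M^{n_j}.$$
The key step is a change-of-measure bound from $\tau_j$ to $\pi^{\otimes_{n_j}}$: applying the variational formula of Lemma \ref{lemma: legendre} with the (bounded) test function $g=-\lambda\mathbf 1_{A_{j,m}}$ and the elementary inequality $\log(1+x)\le x$ gives, for every $\lambda>0$,
$$\tau_j(A_{j,m})\ \le\ \frac{D(\tau_j\|\pi^{\otimes_{n_j}})+e^{\lambda}\,\pi^{\otimes_{n_j}}(A_{j,m})}{\lambda}\ \le\ \frac{n_jC+e^{\lambda}\,\pi^{\otimes_{n_j}}(A_{j,m})}{\lambda}$$
(we may assume $C\ge 0$, since relative entropy is nonnegative). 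Under $\pi^{\otimes_{n_j}}$ the variable $\sum_i\mathbf 1_{K_m^c}(x_i)$ is a sum of $n_j$ i.i.d.\ Bernoulli$(p_m)$ variables with $p_m:=\pi(K_m^c)$, so a Chernoff bound gives $\pi^{\otimes_{n_j}}(A_{j,m})\le e^{-n_j c_m}$ where $c_m>0$ depends only on $\delta_m$ and $p_m$ and, for $\delta_m$ fixed, $c_m\to\infty$ as $p_m\to 0$. Choosing $\lambda=n_j c_m$ makes the exponential term equal to $1$, so $\tau_j(A_{j,m})\le (n_jC+1)/(n_j c_m)\le (C+1)/c_m$ for every $j$.

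Finally I would fix $\delta_m:=2^{-m}$, and then use the tightness of $\pi$ to choose the compacts $K_m$ with $p_m=\pi(K_m^c)$ small enough that $c_m\ge 2^m(C+1)/\varepsilon$; this gives $i_{n_j}(\tau_j)(\mathcal K_\varepsilon^c)\le\sum_{m}(C+1)/c_m\le\varepsilon$ uniformly in $j$, completing the proof. The point that needs care is the order of the choices: first $\delta_m$ (so that $\mathcal K_\varepsilon$ is tight and $c_m$ is a genuine positive rate), then $p_m$ small enough to push $c_m$ up as far as needed, and only afterwards $\lambda$ in the entropy inequality to absorb the factor $e^{-n_j c_m}$; apart from this bookkeeping everything is routine. (This is the exponential-tightness half of Sanov's theorem, which one could instead simply cite.)
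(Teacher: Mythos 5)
Your proof is correct, and it is genuinely different in character from what the paper does: the paper does not prove the lemma at all but defers to the literature (Dupuis--Ellis, the longer arXiv version, and Mariani's equivalence with exponential tightness in Sanov's theorem), whereas you give a complete, self-contained argument. The standard cited route goes through the mean measure $\bar\tau_j(B)=\int_{\mathcal P(M)}\mu(B)\,d(i_{n_j}(\tau_j))(\mu)$: by the chain rule and convexity (superadditivity of relative entropy over marginals) one gets $D(\bar\tau_j\|\pi)\le\frac{1}{n_j}D(\tau_j\|\pi^{\otimes_{n_j}})\le C$, then uses that entropy level sets are tight and that tightness of a family of random measures follows from tightness of their intensities. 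Your route instead reproves the exponential-tightness half of Sanov's theorem by hand: the compact $\mathcal K_\varepsilon$ is the standard one, the change-of-measure bound $\tau(A)\le\lambda^{-1}\bigl(D(\tau\|\mu)+e^{\lambda}\mu(A)\bigr)$ follows correctly from Lemma \ref{lemma: legendre} applied to $g=-\lambda\mathbf 1_A$ (which is bounded, so the lemma applies), the Chernoff estimate for the binomial tail is right, and the order of the choices ($\delta_m$ first, then $K_m$ to make the rate $c_m$ large, then $\lambda=n_jc_m$) is handled properly so that the final bound $\sum_m(C+1)/c_m\le\varepsilon$ is uniform in $j$. What your approach buys is independence from external references and from the machinery of mean measures, at the cost of a longer computation; it also pleasantly reuses the same variational formula (Lemma \ref{lemma: legendre}) that drives the rest of the paper.
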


Without loss of generality,
we can suppose $\beta_n =1$ 
for every $n$ by redefinition of $W_n$
and $W$. Then
the {\it Gibbs measure} (\ref{eq: gibbs}) and
the {\it free energy} (\ref{eq: free}) are
	$$d \gamma_n =
	e^{-n  W_n} d\pi^{\otimes_n} 
	\quad \text{and} \quad
	F = W + D( \cdot \| \pi).$$
As explained in Subsection \ref{idea} we need to prove
(\ref{eq: limit to prove}) which in this case is
 $$ \inf_{\tau \in \mathcal P(M^n)}
	 \left\{ \mathbb E_{i_n(\tau)} \left[  f \right] + 
	 \mathbb E_\tau 
	  \left[ W_n  \right] + \frac{1}{n}D (\tau \| \pi^{\otimes_n})\right\}
	 \xrightarrow[\: n \to \infty \:]{}
 	\inf_{\mu \in \mathcal P(M) } \{ f \left( \mu \right) +
	 F \left( \mu \right) \}.$$

\begin{proof}[Proof of Theorem 
\ref{theorem: laplace}: Case of finite $\beta$]

First, we will prove the
{\bf lower limit bound}
\begin{equation} \label{eq: lower limit bound}
\liminf_{n \to \infty} 
 \inf_{\tau \in \mathcal P(M^n)}
	 \left\{ \mathbb E_{i_n(\tau)} \left[  f \right] + 
	 \mathbb E_\tau 
	  \left[ W_n  \right] + \frac{1}{n}D (\tau \| \pi^{\otimes_n})\right\} \\
	  \geq 	  \inf_{\mu \in \mathcal P(M) } \{ f \left( \mu \right) + F \left( \mu \right)\}.
\end{equation}
This is equivalent to say that for every 
increasing sequence 
of natural numbers $\{n_j\}_{j \in \mathbb N}$
if we choose, for each $j \in \mathbb N$, a
probability measure
$\tau_j \in \mathcal P(M^{n_j})$ we have

\begin{equation} \label{eq: liminf part}
	\lim_{j \to \infty}\left\{	
	\mathbb E_{i_{n_j}(\tau_j)} \left[  f \right] + 
	 \mathbb E_{\tau_j}
	  \left[ W_{n_j}  \right] + \frac{1}{n_j} 
	  D (\tau_j \| \pi^{\otimes_{n_j}})
	  \right\}
	\geq
	\inf_{\mu \in \mathcal P(M) } 
	\{ f \left( \mu \right) + 
	F \left( \mu \right)\}. 
\end{equation}
where we can suppose that the limit exists
and that it is finite and, in particular,
the sequence is bounded from above.

Using that $\{W_n\}_{n \in \mathbb N}$ 
is a {\it stable sequence} \ref{stablesequence},
we get that 
	$\frac{1}{n_j} 
	  D (\tau_j \| \pi^{\otimes_{n_j}})$
is uniformly bounded from above.
By the {\it 
confining property of the entropy},
Lemma \ref{lemma: entropy confining}, we get that
$i_{n_j}(\tau_j)$ is tight. By taking a subsequence 
using Prokhorov's theorem,
we shall 
assume it converges to some 
$\zeta \in \mathcal P( \mathcal P(M) )$.
 Then, by the {\it 
lower limit property of the entropy}, 
Lemma \ref{lemma: entropy lower limit},
we get
 $$\mathbb E_{\zeta} 
 \left[ D \left(\cdot | \pi \right )
	 \right]  \leq 
	\liminf_{j \to \infty}  
	\frac{1}{n_j} D (\tau_{j} \| \pi^{\otimes_{n_j}}).
	$$
As $\tilde W_n$ is measurable 
for every $n$ (see \cite[Proposition 7.6]{david}
for a proof)
and the sequence 
$\{\tilde W_n\}_{n \in \mathbb N}$
is uniformly bounded from below we 
may use the {\it lower limit assumption} 
\ref{lowerlimitassumption}
to get (see \cite[Proposition 3.2]{mauro})
	$$\mathbb E_\zeta [W] 
	\leq
	   \liminf_{j \to \infty} \mathbb E_{\tau_{j}}
	    \left[	 W_{n_j} \right].$$
Then, by taking the lower limit 
when $j$ tends to infinity 
in (\ref{eq: liminf part}),
we obtain
	$$\lim_{j \to \infty} \left\{
	\mathbb E_{i_{n_j}(\tau_j)} \left[  f \right] + 
	 \mathbb E_{\tau_j}
	  \left[ W_{n_j}  \right] + \frac{1}{n_j} 
	  D (\tau_j \| \pi^{\otimes_{n_j}})
	  \right\}
	  \geq 
	  \mathbb E_{\zeta} \left[ f + 
	W +  D (\cdot \| \pi) \right]
	\geq
		\inf_{\mu \in \mathcal P(M) } 
	\{ f \left( \mu \right) + 
	F \left( \mu \right)\}. 
	  $$

\noindent
Now let us prove the {\bf upper limit bound}
\begin{equation} \label{eq: upper limit bound}
	 \limsup_{n \to \infty}  \inf_{\tau \in \mathcal 
	 P(M^n)}
	 \left\{ \mathbb E_{i_n(\tau)} \left[ f \right] + 
	 	\mathbb E_{\tau} \left[ W_n \right] +
	 \frac{1}{n} D (\tau \| \pi^{\otimes_n})\right\}
	\leq 	 \inf_{\mu \in \mathcal P(M) } 
	\{ f \left( \mu \right) + F \left( \mu \right)\}. 
\end{equation}
We need to prove that for every probability 
measure
$\mu \in \mathcal P(M)$
	$$ \limsup_{n \to \infty} 
	\inf_{\tau \in \mathcal P(M^n)}
	 \left\{ \mathbb E_{i_n(\tau)} \left[ f \right] + 
	 	\mathbb E_{\tau} \left[ W_n \right]+
	 \frac{1}{n} D (\tau \| \pi^{\otimes_n})\right\}
	 \leq 	  
	f \left( \mu \right) + F \left( \mu \right). $$
It is enough to find a sequence 
$\tau_n \in \mathcal P(M^n)$ such that
	$$\limsup_{n \to \infty} 
	\left\{ \mathbb E_{i_n(\tau_n)} \left[ f \right] + 
	\mathbb E_{\tau_n} \left[ W_n \right]+
	\frac{1}{n}D (\tau_n \| \pi^{\otimes_n})\right\}
	\leq 	  
	f \left( \mu \right) + F \left( \mu \right). 
	$$
We shall choose $\tau_n = \mu^{\otimes_n}$. 
Then we know that, by the 
law of large numbers,
we have the weak convergence 
$ i_n(\tau_n) \to \delta_\mu,$ so
	$$ \lim_{n \to \infty} 
	\mathbb E_{i_n \left(\tau_n \right)} 
	\left[  f  \right] = f(\mu) .$$
In addition, 
by using that 
$D (\tau_n \| \pi^{\otimes_n}) =
 n \, D(\mu \|  \pi)$ and
the {\it upper
limit assumption} \ref{upperlimitassumption}
 we get that
	$$\limsup_{n \to \infty} 
	 \left\{ \mathbb E_{i_n(\tau_n)} 
	 \left[ f \right] +  
	  \mathbb E_{\tau_n} 
	  \left[ W_n  \right] +
	 \frac{1}{n} D (\tau_n \| \pi^{\otimes_n})\right\}
	  \leq
	 f(\mu) + W(\mu) + D(\mu \| \pi) $$
completing the proof.

\end{proof}

\subsection{Proof of Theorem 
\ref{theorem: laplace}:
 Case of infinite $\beta$}

In this subsection we provide a proof for Theorem
 \ref{theorem: laplace} for the case of
 infinite $\beta$
by modifying the proof
used in the case of finite $\beta$.
Recall that from the definition 
of {\it Gibbs measure} (\ref{eq: gibbs}) and 
{\it free energy} (\ref{eq: free})
now we have
	$$d \gamma_n =
	e^{-n  \beta_n W_n} d\pi^{\otimes_n} ,
	\quad \text{and} \quad
	F = W .$$
where $\beta_n \to \infty$.

We first notice that a
{\it confining sequence} \ref{confiningsequence} 
satisfies an
a priori stronger property.

\begin{proposition}[Confining property of
the expected value of the energy]
\label{generalconfining}

Assume that $\{W_n\}_{n \in \mathbb N}$
is a stable \ref{stablesequence}
and confining \ref{confiningsequence} sequence
and take
a sequence of
probability measures $\{\chi_j\}_{j \in \mathbb N}$ on 
$\mathcal P(M)$, i.e. $\chi_j \in \mathcal P( \mathcal P(M))$.
Suppose there exists an increasing sequence
$\{n_j\}_{j \in \mathbb N}$ 
of natural numbers and a constant $C < \infty$
such that $\mathbb E_{\chi_j} \left[\tilde W_{n_j} \right] \leq C$
for every $j \in \mathbb N$ . Then
$\{\chi_j\}_{n \in \mathbb N}$ is 
relatively compact in 
$\mathcal P( \mathcal P(M))$.
\begin{proof}
The proof is left to the reader.
See for instance \cite[Lemma 2.1]{sandierserfaty}
for an idea or \cite[Proposition 3.4]{david}
and \cite[Proposition 7.4]{david} for a full proof.
\end{proof}

\end{proposition}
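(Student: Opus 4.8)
\textbf{Proof proposal for Proposition \ref{generalconfining}.}

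The plan is to reduce the statement about measures $\chi_j$ on $\mathcal{P}(M)$ to the ordinary confining assumption \ref{confiningsequence} by passing through a suitable tightness-of-barycenters argument. First I would observe that $\mathcal{P}(\mathcal{P}(M))$ is again a Polish space, and that a family $\{\chi_j\}$ is relatively compact there iff it is tight, by Prokhorov. So the goal becomes: produce, for each $\varepsilon>0$, a compact set $\mathcal{K}_\varepsilon \subset \mathcal{P}(M)$ with $\chi_j(\mathcal{K}_\varepsilon^c) \le \varepsilon$ for all $j$. The natural candidate for $\mathcal{K}_\varepsilon$ is a sublevel set of $\tilde W$-type data: because of the uniform bound $\mathbb{E}_{\chi_j}[\tilde W_{n_j}] \le C$ and stability ($\tilde W_n \ge C_0$ for some fixed $C_0 \in \mathbb{R}$), Markov's inequality gives, for any $A > C_0$,
\[
\chi_j\big(\{\mu : \tilde W_{n_j}(\mu) > A\}\big) \le \frac{C - C_0}{A - C_0}.
\]
Thus, choosing $A = A_\varepsilon$ large, the complement is controlled uniformly in $j$. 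It remains to show that the ``good'' event $\{\mu : \tilde W_{n_j}(\mu) \le A_\varepsilon\}$ is contained in a single compact subset of $\mathcal{P}(M)$ that does not depend on $j$ — and this is exactly what the confining assumption \ref{confiningsequence} provides, since any sequence of measures $\mu_j$ with $\tilde W_{n_j}(\mu_j) \le A_\varepsilon$ is relatively compact in $\mathcal{P}(M)$; hence the union $\bigcup_j \{\mu : \tilde W_{n_j}(\mu) \le A_\varepsilon\}$ has compact closure $\mathcal{K}_\varepsilon$. With $\chi_j(\mathcal{K}_\varepsilon^c) \le (C-C_0)/(A_\varepsilon - C_0) \le \varepsilon$ for all $j$, tightness of $\{\chi_j\}$ follows.

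The one point that needs care — and which I expect to be the main obstacle — is measurability and the precise formulation needed to apply Markov's inequality: one must know that $\tilde W_{n_j} : \mathcal{P}(M) \to (-\infty,\infty]$ is Borel measurable (this is cited earlier, \cite[Proposition 7.6]{david}) so that $\{\mu : \tilde W_{n_j}(\mu) > A\}$ is a genuine Borel set and $\mathbb{E}_{\chi_j}[\tilde W_{n_j}]$ makes sense. A second subtlety is that the confining assumption as stated speaks of sequences $\{\mu_j\}$ with a common bound along an increasing sequence $\{n_j\}$; to conclude that $\bigcup_j \{\mu : \tilde W_{n_j}(\mu) \le A_\varepsilon\}$ is relatively compact one argues by contradiction: if not, extract $\mu_{j_\ell}$ from these sets with no convergent subsequence, contradicting \ref{confiningsequence} applied to the subsequence $\{n_{j_\ell}\}$. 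Everything else is the routine Prokhorov/Markov packaging, and indeed the paper itself defers the details to \cite[Proposition 3.4]{david} and \cite[Proposition 7.4]{david}, with \cite[Lemma 2.1]{sandierserfaty} cited for the underlying idea.
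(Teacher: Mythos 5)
Your overall strategy --- Prokhorov in $\mathcal P(\mathcal P(M))$, Markov's inequality using stability \ref{stablesequence} to reduce to the sublevel sets $S_j^A=\{\mu:\tilde W_{n_j}(\mu)\le A\}$, and the confining assumption \ref{confiningsequence} to place these in a common compact set --- is exactly the intended route, and the Markov and measurability steps are fine. The one step that does not go through as written is the claim that $\bigcup_j S_j^A$ has compact closure. Your contradiction argument extracts $\nu_\ell\in S_{j_\ell}^A$ with no convergent subsequence and applies \ref{confiningsequence} to $\{n_{j_\ell}\}$; this works only when the indices $j_\ell$ can be taken strictly increasing. If instead infinitely many of the $\nu_\ell$ lie in a \emph{single} sublevel set $S_{j^*}^A$, the confining assumption gives you nothing: as stated it concerns sequences $\{\mu_j\}$ with $\tilde W_{n_j}(\mu_j)\le A$ along an increasing sequence of integers, and it does not assert that a sublevel set of one fixed $\tilde W_n$ is relatively compact. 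For instance, $W_2\equiv 0$ with all other $W_n$ strongly confining is still a stable and confining sequence, yet $\{\tilde W_2\le A\}=i_2(M^2)$ is not relatively compact when $M$ is noncompact, so your set $\mathcal K_\varepsilon$ would fail to be compact.

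The gap is easily repaired without changing the approach. Since $\mathcal P(M)$ is Polish, each single measure $\chi_j$ is tight, so there is a compact $\mathcal L_j\subset\mathcal P(M)$ with $\chi_j(\mathcal L_j^c)\le\varepsilon/2$. Choose $A$ so that $(C-C_0)/(A-C_0)\le\varepsilon/2$ and set $T_j=S_j^A\cap\mathcal L_j$, so that $\chi_j(T_j^c)\le\varepsilon$ for every $j$. Now $\bigcup_j T_j$ \emph{is} relatively compact: given $\nu_\ell\in T_{j_\ell}$, either infinitely many $\ell$ share one index $j^*$, in which case those $\nu_\ell$ lie in the compact set $\mathcal L_{j^*}$, or the $j_\ell$ take infinitely many values, in which case one passes to a strictly increasing subsequence of indices and invokes \ref{confiningsequence}; in both cases a convergent subsequence exists. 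Taking $\mathcal K_\varepsilon=\overline{\bigcup_j T_j}$ gives $\chi_j(\mathcal K_\varepsilon^c)\le\varepsilon$ for all $j$, and Prokhorov concludes. (If one reads ``increasing'' in \ref{confiningsequence} as merely non-decreasing, constant index sequences are allowed, each $S_j^A$ is itself relatively compact, and your argument is complete as written; the patch above makes the proof independent of that reading.)
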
	 
\ \\
Now we proceed with the proof of the theorem.
	 
\begin{proof}[Proof of Theorem 
\ref{theorem: laplace}: Case of infinite $\beta$]
Take $f:\mathcal P(M) \to \mathbb R$ bounded continuous.
By Subsection \ref{idea} 
about the idea of the proof we need to obtain
(\ref{eq: limit to prove}). We start proving
the {\bf lower limit bound}
$$
\liminf_{n \to \infty}  \inf_{\tau \in \mathcal P(M^n)}
	 \left\{ \mathbb E_{i_n(\tau)} \left[  f \right] + 
	 \mathbb E_\tau 
	  \left[ W_n  \right] + \frac{1}{n \beta_n}
	  D (\tau \| \pi^{\otimes_n})\right\} 
	  \geq 	  \inf_{\mu \in \mathcal P(M) } 
	  \{ f \left( \mu \right) + W \left( \mu \right)\} .    
	  $$
As in the proof used in the case of finite $\beta$
we want to see that
for every 
increasing sequence 
of natural numbers $\{n_j\}_{j \in \mathbb N}$
and choosing for each $j \in \mathbb N$ a
probability measure
$\tau_j \in \mathcal P(M^{n_j})$ we have	
	
\begin{equation} 
	\lim_{j \to \infty}\left\{	
	\mathbb E_{i_{n_j}(\tau_j)} \left[  f \right] + 
	 \mathbb E_{\tau_j}
	  \left[ W_{n_j}  \right] + 
	  \frac{1}{n_j \beta_{n_j}} 
	  D (\tau_j \| \pi^{\otimes_{n_j}})
	  \right\}
	\geq
	\inf_{\mu \in \mathcal P(M) } 
	\{ f \left( \mu \right) + 
	W \left( \mu \right)\}. 
\end{equation}		
where we can suppose that the limit
exists and it is finite.
As the entropy is non-negative we see that	
$\mathbb E_{\tau_j} \left[ W_{n_j}  \right]
= 
\mathbb E_{i_{n_j}(\tau_j)} \left[ \tilde W_{n_j}  \right]$
is a bounded sequence and, since 
$\{W_n\}_{n \in \mathbb N}$ is a  
{\it confining sequence} \ref{confiningsequence},
Proposition \ref{generalconfining} tells us
that $i_{n_j}(\tau_j)$ is 
relatively compact in 
$\mathcal P(\mathcal P(M))$. We continue as 
in the proof used in the case of finite $\beta$
where now $W$ is bounded from below
by the {\it regularity assumption} 
\ref{regularityassumption} and
because $\{W_n\}_{n \in \mathbb N}$
is a {\it stable sequence} \ref{stablesequence}.
\\
The proof of the {\bf upper limit bound}
follows the same reasoning as in 
the case of finite $\beta$.
Take $\mu \in \mathcal P(M)$.
Following the arguments used 
in the case of finite $\beta$ we can prove that
\begin{equation}
 \label{eq: upper limit bound without inf}
\limsup_{n \to \infty} 
\inf_{\tau \in \mathcal P(M^n)}
	 \left\{ \mathbb E_{i_n(\tau)} \left[  f \right] + 
	 \mathbb E_\tau 
	  \left[ W_n  \right] + 
	  \frac{1}{n \beta_n}
	  D (\tau \| \pi^{\otimes_n})\right\} 		
	  \leq 	\inf_{\mu \in \mathcal N}
	  \left\{  f \left( \mu \right) + 
	  W \left( \mu \right) \right\}
\end{equation}	 
where $\mathcal N$ was defined in (\ref{eq: nice}).
By the {\it regularity assumption} 
\ref{regularityassumption} we get
	$$\inf_{\mu \in \mathcal N}
	\left\{
	f \left( \mu \right) + W \left( \mu \right)
	\right\}
	=
	\inf_{\mu \in \mathcal P(M)}
	\left\{
	f \left( \mu \right) + W \left( \mu \right)
	\right\}
	$$
completing the proof.

\end{proof}

\subsection{Proof of Corollary \ref{cor: deviations}}

\begin{proof}[Proof of Corollary 
\ref{cor: deviations}]
We know that the large deviation principle is
equivalent to the Laplace principle for the sequence 
$i_n(\mathbb P_n)$ if
the rate function has compact level sets
(see
\cite[Theorem 1.2.1]{bdupuis} 
and 
\cite[Theorem 1.2.3]{bdupuis})

If $\beta < \infty$ this is the case because
the entropy has compact level sets
(see \cite[Lemma 1.4.3 (c)]{bdupuis}) and
$W$ is a lower semicontinuos function
bounded from below.
The lower semicontinuity of $W$ is a consequence
of the {\it lower and upper limit assumption, }
\ref{lowerlimitassumption} and
\ref{upperlimitassumption}.

If $\beta$ is infinite then there is no entropy
term and we can use that
$\{W_n\}_{n \in \mathbb N}$
is a {\it confining sequence}
\ref{confiningsequence},
and that $(\{W_n\}_{n \in \mathbb N},W)$ 
satisfies the {\it lower limit assumption } 
\ref{lowerlimitassumption}
and the {\it regularity assumption}
\ref{regularityassumption} 
to prove that $W$ has compact level sets.

Then we have to prove that,
for every bounded continuous
function $f: \mathcal P(M) \to \mathbb R$,
	$$\frac{1}{n\beta_n} 
	\log \,  \mathbb E_{i_n \left(\mathbb P_n \right)}
	 \left[ e^{- n \beta_n f} \right] 
	\,
	 \xrightarrow[\: n \to \infty \:]{}
	\,
	- \inf_{\mu \in \mathcal P(M) } \{ f \left( \mu \right) +
	 F \left( \mu \right) - \inf F \}$$
or, using the measures $\gamma_n$,

	$$\frac{1}{n \beta_n} \log \,  \int_{M^n} 
	 e^{- n \beta_n f \circ i_n} \frac{d \gamma_n}{Z_n} 
	 \,
	 \xrightarrow[\: n \to \infty \:]{}
	 \,  
	- \inf_{\mu \in \mathcal P(M) } \{ f \left( \mu \right) +
	 F \left( \mu \right) - \inf F \}.$$	 
This we can achieve
by using	 Theorem \ref{theorem: laplace} 
twice, for $f$ and for the zero function. 

\end{proof}

\begin{remark}[Other proof in the case of finite 
$\beta$]
When treating
the case
$\beta<\infty$, the
proof we are aware of 
is \cite{vortex}. It uses a ``quasi-continuity''
of the energy and it seems somewhat specific
to the logarithmic energy.
\end{remark}

\begin{remark}[Other proofs in 
the case of infinite $\beta$]
\label{remark: usual}
The proofs
that treat the case
$\beta = \infty$ usually follow
closely the approach we used
for the large deviation upper bound.
For the large deviation lower bound
they proceed as follows.
If $A$ is an open set of
$\mathcal P(M)$ and $\mu \in A$, they 
try to obtain
$$\liminf_{n \to \infty} \frac{1}{n \beta_n}
\log \int_{i_n^{-1}(A)} e^{-n \beta_n W_n} d\pi^{\otimes_n}	 
\geq - W(\mu).$$
For this, they search
pairwise
disjoint sets 
$B_1,...,B_n$ such that
$i_n(B_1 \times ... \times B_n) \subset A$
and such that
$\max_{B_1 \times... \times B_n} W_n
\simeq W(\mu)$. Then we may write
	$$
	 \int_{i_n^{-1}(A)} 
	 e^{-n \beta_n W_n} d\pi^{\otimes_n}	 
	\geq 
	\sum_{\sigma \in S_n}
	 \int_{B_{\sigma (1)} \times
	 ... \times B_{\sigma(n)}}
	  e^{-n \beta_n W_n} 
	 d\pi^{\otimes_n}	 
	 \geq
	 n! \pi(B_1)...\pi(B_n)
	 e^{-n \beta_n \max_{B_1 \times... \times B_n} W_n} 
	.$$
If we are able to choose those sets
such that $\pi(B_i) \geq \frac{C}{n}$ for
some $C$ independent of $n$ we can obtain,
using Stirling's formula,
	$$\liminf_{n \to \infty}
	\frac{1}{n \beta_n}\log(n!\pi(B_1)...\pi(B_n))
	\geq 
	\liminf_{n \to \infty}
	\frac{1}{n\beta_n}(\log n! - n \log n)
	\geq 0
	$$
and conclude by using that
$\lim_{n \to \infty}
\max_{B_1 \times... \times B_n} W_n
= W(\mu)$.

\end{remark}

\section{Applications}

\label{section: applications}

In this section we shall give the main applications
we are thinking of: Conditional Gibbs measure,
a Coulomb gas on a Riemannian manifold,
the known results of Coulomb gases in
Euclidean space
and the zeros of Gaussian random polynomials.

\subsection{Conditional Gibbs measure}

\label{subsection: conditional}

In this subsection we treat the case
of the Gibbs measure associated to a two-body interaction
but with some of the points conditioned to
be deterministic. 
We proceed by considering 
the deterministic points
as a background charge and
treat the interaction with this background
as some potential energy
that depends on $n$. More precisely, we use
the following more general setup. 

Let $\{\nu_n\}_{n \in \mathbb N}$ be a 
sequence of probability measures on 
a compact metric space $M$ that converges
to some probability measure
$\nu \in \mathcal P(M)$. 
Suppose we have a lower semicontinuous
function
$G^E:M \times M \to ( - \infty, \infty]$
that
shall be thought of as the interaction
 energy between the particles and the 
 environment
 and a symmetric lower semicontinuous function
$G^I:M \times M \to ( - \infty, \infty]$
that will be interpreted as the interaction
 energy between the particles. 
 More precisely
 we define two kinds of energy.\\
 {\it External potential energy.}
The probability measure $\nu_n$ will interact with
the $n$ particles via the external potential
$V_n: M \to \mathbb R$ defined by
$V_n(x) = \int_{M} G^E(x,y) d\nu_n(y)$.
This gives rise to the external energy
 $W^E_n: M^n \to ( - \infty, \infty]$
$$W^E_n(x_1,...,x_n) =
\frac{1}{n} 
	\sum_{i=1}^n V_n(x_i) $$
with a macroscopic external energy
$W^E : \mathcal P(M) \to ( - \infty, \infty]$
 	$$W^E(\mu) = \int_{M \times M} 
 	G^E(x,y)\,  d\mu(x) d\nu(y). $$
{\it Internal potential energy.} 
For each $n$ we shall think of $n$ particles
interacting with the two-particle potential $G^I$.
This would give rise to 
an internal energy  
$W^I_n : M^n \to ( - \infty, \infty]$
$$W^I_n(x_1,...,x_n) =
	\frac{1}{n^2}\sum_{i < j}^n G^I(x_i,x_j) $$
and a macroscopic internal energy
$W^I : \mathcal P(M) \to ( - \infty, \infty]$
	 $$W^I(\mu) = 
	 \frac{1}{2}\int_{M \times M}
	  G^I(x,y)\,  d\mu(x) d\mu(y) .$$
{\it Total potential energy.} 
For each $n$ we define
	$$W_n = W^E_n + W^I_n \ \ \ \mbox{ and } \ \ \ 
	W = W^E + W^I. $$

Then, it is not hard to see that
$\{W_n\}_{n \in \mathbb N}$ is a
{\it stable sequence} \ref{stablesequence} 
and
$W$ is a lower semicontinuous function.
The example of a conditional
Gibbs measure
can be obtained essentially by choosing as $\nu_n$
the empirical measure of some points and
$G^I = G^E$. So, a particular case of the
next theorem is a Coulomb gas
conditioned to all but an increasing number of points.

\begin{theorem}[Varying environment]
Suppose that
	$x \mapsto \int_M G^E(x,y) d\nu(y)$
is continuous. 

\label{theorem: conditionalgas}

Let
	$$\tilde{\mathcal N} =
	\left\{ \mu \in \mathcal P(M) :\ 
	D(\mu\| \pi) < \infty \mbox{ and }
	y \mapsto \int_M G^E(x,y) d\mu(x) 
	\mbox{ is continuous}
	\right\}$$
and suppose that for every 
$\mu \in \mathcal P(M)$ such that $W^I(\mu) < \infty$
there exists a sequence 
$\{\mu_n\}_{n \in \mathbb N}$ of probability
measures
in $ \tilde{\mathcal N} $ such that $\mu_n \to \mu$
and $ W^I(\mu_n) \to  W^I(\mu)$.

Then 
$W$ is the zero temperature macroscopic limit of
$\{W_n\}_{n \in \mathbb N}$.
In particular, if 
we choose $\beta_n \to \infty$, 
Theorem \ref{theorem: laplace}
and Corollary \ref{cor: deviations} may
be applied for $(\{W_n\}_{n \in \mathbb N},W)$.

\begin{proof}
Let us prove
the {\it lower limit assumption} 
\ref{lowerlimitassumption}.

{\bf Lower limit assumption \ref{lowerlimitassumption}.} 
By Proposition \ref{proposition: k-body}, 
we already know that 
$(\{W^I_n\}_{n \in \mathbb N},W^I)$
satisfies the {\it lower limit assumption} 
\ref{lowerlimitassumption}.
We only 
need to check this for $(\{W^E_n\}_{n \in \mathbb N},
W^E)$.

If $\mu_n = i_n(x_1,...,x_n)$ then
	$$\tilde W^E_n(\mu_n) = \int_M V_n d\mu_n =
					\int_{M \times M}
					 G^E(x,y) d\mu_n(x) d\nu_n(y),$$
where $\tilde W^E_n$ is defined in 
(\ref{eq: extension}).		
So, the {\it lower limit assumption}
\ref{lowerlimitassumption}
is a consequence of the lower semicontinuity of $G^E$.

{\bf Regularity assumption 
\ref{regularityassumption}.}
To prove the {\it regularity assumption} 
\ref{regularityassumption}
we take
$\mu \in \mathcal P(M)$ such that
$W(\mu) < \infty$. 
Then $W^I(\mu) < \infty$.
By hypothesis, 
we know that there exists
a sequence $\{\mu_n\}_{n \in \mathbb N}$ of probability
measures
in $ \tilde{\mathcal N} $ such that $\mu_n \to \mu$
and $ W^I(\mu_n) \to  W^I(\mu)$.
As $x \mapsto \int_M G^E(x,y) d\nu(y)$ is continuous
we also have that
$W^E(\mu_n) \to W^E(\mu)$.
So, $W(\mu_n) \to W(\mu)$.
 
We have to prove that the sequence we chose is in the set
$\mathcal N$ defined in (\ref{eq: nice}) by
$$\mathcal N = \left\{ \mu \in \mathcal P(M) :\
D(\mu \| \pi) < \infty \mbox{ and }
\limsup_{n \to \infty}
 \mathbb E_{\mu^{\otimes_n}}[W_{n}] \leq W(\mu)
 \right\},$$
i.e. we need to see that
$\tilde{\mathcal N} \subset \mathcal N$.

Let $\mu \in \tilde{\mathcal N}$. Then
\begin{align*}
	\mathbb E_{\mu^{\otimes_n}}[W^E_n] 
	&=\mathbb E_{\mu}[V_n]			\\
	&=\int_M
	\left(\int_M G^E(x,y) d\nu_n(y)\right)
	d\mu(x)					 		\\
	&=\int_M
	\left(\int_M G^E(x,y) d\mu(x)\right)
	d\nu_n(y). 
\end{align*}
So, as $y \mapsto 
\int_M G^E(x,y) d\mu(x)$ is continuous,
we get 
	$$\mathbb E_{\mu^{\otimes_n}}[W^E_n] 
	\,
	\xrightarrow[\: n \to \infty \:]{}
	\,	
	 \int_{M \times M}
	  G^E(x,y) d\nu(y)  d\mu(x) = W^E(\mu).$$
By Proposition \ref{proposition: k-body} we already know that 
$\lim_{n \to \infty} \mathbb E_{\mu^{\otimes_n}}[W^I_n] =
 W^I(\mu) $
and then
$\mu \in \mathcal N$.

\end{proof}
\end{theorem}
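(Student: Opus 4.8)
The plan is to verify that the pair $(\{W_n\}_{n \in \mathbb N}, W)$ satisfies the two conditions defining the zero temperature macroscopic limit, namely the lower limit assumption \ref{lowerlimitassumption} and the regularity assumption \ref{regularityassumption}, and then invoke Theorem \ref{theorem: laplace} and Corollary \ref{cor: deviations} directly. Since $W_n = W^E_n + W^I_n$ and $W = W^E + W^I$, I would handle the external and internal parts separately wherever possible, using Proposition \ref{proposition: k-body} to dispose of the internal part (which is exactly a $2$-body interaction with kernel $G^I$, so it satisfies both (A1) and (A2), and $W^I$ is lower semicontinuous).

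For the lower limit assumption, it suffices to treat $(\{W^E_n\}_{n \in \mathbb N}, W^E)$, since a liminf of a sum is at least the sum of liminfs and the internal part is already controlled. The point is that for an atomic $\mu_n = i_n(x_1,\dots,x_n)$ one has the identity $\tilde W^E_n(\mu_n) = \int_M V_n\, d\mu_n = \int_{M\times M} G^E(x,y)\, d\mu_n(x)\, d\nu_n(y)$, so if $\mu_n \to \mu$ then, since $\nu_n \to \nu$, the product measures $\mu_n \otimes \nu_n$ converge weakly to $\mu \otimes \nu$ on the compact space $M \times M$, and lower semicontinuity of $G^E$ together with its lower boundedness (automatic on a compact space since $G^E$ is lower semicontinuous) gives $\liminf_n \tilde W^E_n(\mu_n) \geq \int_{M\times M} G^E\, d(\mu\otimes\nu) = W^E(\mu)$. (If $\mu_n$ fails to be atomic for some $n$, then $\tilde W^E_n(\mu_n) = \infty$ and there is nothing to check for that index.)

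For the regularity assumption, given $\mu$ with $W(\mu) < \infty$, hence $W^I(\mu) < \infty$, the hypothesis furnishes $\mu_n \to \mu$ in $\tilde{\mathcal N}$ with $W^I(\mu_n) \to W^I(\mu)$; continuity of $x \mapsto \int_M G^E(x,y)\, d\nu(y)$ upgrades this to $W^E(\mu_n) \to W^E(\mu)$, so $W(\mu_n) \to W(\mu)$ and in particular $\limsup_n W(\mu_n) \le W(\mu)$. It then remains to show $\tilde{\mathcal N} \subset \mathcal N$, where $\mathcal N$ is the set (\ref{eq: nice}). For $\mu \in \tilde{\mathcal N}$ one has finite entropy by definition; the content is the limsup condition on $\mathbb E_{\mu^{\otimes_n}}[W_n]$. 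The internal part is immediate from Proposition \ref{proposition: k-body}. For the external part, Fubini gives $\mathbb E_{\mu^{\otimes_n}}[W^E_n] = \mathbb E_\mu[V_n] = \int_M \big(\int_M G^E(x,y)\, d\mu(x)\big)\, d\nu_n(y)$, and since $\mu \in \tilde{\mathcal N}$ the inner integral is a continuous bounded function of $y$, so $\nu_n \to \nu$ yields $\mathbb E_{\mu^{\otimes_n}}[W^E_n] \to W^E(\mu)$. Adding the two, $\limsup_n \mathbb E_{\mu^{\otimes_n}}[W_n] \le W(\mu)$, so $\mu \in \mathcal N$, as needed.

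The main obstacle—though the proof itself is short—is the bookkeeping around the external term: one must be careful that the Fubini swap is legitimate (here it is fine because $G^E$ is lower semicontinuous on a compact space, hence bounded below, so the integrals are well-defined in $(-\infty,\infty]$), and that the weak convergence $\mu_n \otimes \nu_n \to \mu \otimes \nu$ really does hold when only the marginals converge (true by testing against products of continuous functions, which generate the weak topology on the compact product space). Everything else is a direct application of Theorem \ref{theorem: laplace} in its $\beta = \infty$ form.
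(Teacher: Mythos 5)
Your proposal is correct and follows essentially the same route as the paper: decompose $W_n = W^E_n + W^I_n$, dispose of the internal part via Proposition \ref{proposition: k-body}, verify the lower limit assumption for the external part through the identity $\tilde W^E_n(\mu_n) = \int_{M\times M} G^E \, d(\mu_n \otimes \nu_n)$ and lower semicontinuity, and establish the regularity assumption by showing $\tilde{\mathcal N} \subset \mathcal N$ with the same Fubini swap and the continuity of $y \mapsto \int_M G^E(x,y)\, d\mu(x)$. The only difference is that you spell out a few justifications the paper leaves implicit (the weak convergence $\mu_n \otimes \nu_n \to \mu \otimes \nu$ and the lower boundedness of $G^E$ on the compact space), which is harmless.
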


For the sake of completeness
we treat the case of 
a Coulomb gas conditioned
to all points but a finite fixed 
number of them. Again, by considering
the deterministic points
as a background charge we can use the following
more general framework.
Suppose we have two compact metric spaces
$M$ and $N$, a probability measure $\Pi$
on $N$ and two
lower semicontinuous functions 
$G^E:N \times M \to ( - \infty, \infty]$
and
$G^I: N \to (-\infty, \infty]$.
Let $\{\nu_n\}_{n \in \mathbb N}$ be a 
sequence of probability measures on 
$M$ that converges
to some probability measure
$\nu \in \mathcal P(M)$.
We will consider one particle in $N$ interacting
with the environment via $G^E$, i.e.
via a potential energy $V_n:N \to ( - \infty, \infty]$
defined by $V_n(x) = \int_M G^E(x,y) d\nu_n(y)$.
This particle
will also have a self-interaction
given by $\lambda_n G^I$ where 
$\{\lambda_n\}_{n \in \mathbb N}$
is a sequence that converges to zero.
The case of a Coulomb gas conditioned to all
but $k$ particles may be obtained by essentially 
taking
 $N = M^k$, 
$\Pi = \pi^{\otimes_k}$,
$G^I(x_1,...,x_k) = \sum_{i<j}G(x_i,x_j)$,
$G^E ((x_1,...,x_k),y) = \sum_{i=1}^k G(x_i, y)$,
 $\lambda_n = \frac{1}{n}$
 and $\nu_n$ as the empirical measure
 of the deterministic particles.

\begin{theorem}
[A particle in a varying environment]
Suppose that $V:N \to \mathbb R$
defined by $V(x) = \int_M G^E(x,y)d\nu(y)$
is (bounded and) continuous.
Let
	$$\tilde{\mathcal N} =
	\left\{ \mu \in \mathcal P(N) :\ 
	D(\mu\| \Pi) < \infty,
	\int_N G^I d\mu< \infty \mbox{ and }
	y \mapsto \int_N G^E(x,y) 
	d\mu(x) \mbox{ is continuous}
	\right\}$$
and suppose that for every $z \in N$ 
there exists a sequence 
of probability measures $\{\mu_n\}_{n \in \mathbb N}$
in $\tilde{\mathcal N}$ such that $\mu_n \to \delta_z$.
Take a sequence of
non-negative numbers $\{\beta_n\}_{n \in \mathbb N}$
such that $\beta_n \to \infty$
and define the measures $\gamma^c_n$ by
	$$d\gamma^c_n = e^{-\beta_n (V_n + \lambda_n G^I)}
	 d\Pi.$$
Then, we have
the following Laplace principle. 	
For every (bounded) continuous function
$f: N \to \mathbb R$  
	$$\frac{1}{\beta_n} 
	 \log \,  \int_{N} 
	 e^{- \beta_n  f  }
	 d \gamma^c_n
	\,
	\xrightarrow[\: n \to \infty \:]{}
	\,
	 - \inf_{x \in N}
	\left\{ f (x) +
		V(x)\right\}.	$$
	
\begin{proof}
We use Lemma \ref{lemma: legendre} to write
	$$\frac{1}{\beta_n} 
	 \log \,  \int_{N} 
	 e^{- \beta_n  f  }
	 d \gamma^c_n
	 =
	 - \inf_{\mu \in \mathcal P(N)}
	 \left\{ 
	 	 \mathbb E_\mu \left[ f \right]
	 	 +
		\mathbb E_\mu \left[V_n \right]
		+ \lambda_n 		 
		\mathbb E_\mu \left[G^I \right]
		+ \frac{1}{\beta_n}D(\mu\|\Pi)
	 \right\}.$$
Following the same ideas used in the proofs
of Theorem \ref{theorem: laplace}
and Theorem \ref{theorem: conditionalgas} we get	
	$$\liminf_{n \to \infty}
	\inf_{\mu \in \mathcal P(N)}
	 \left\{ 
	 	\mathbb E_\mu \left[ f \right] +
		\mathbb E_\mu \left[V_n \right]
		+ \lambda_n 		 
		\mathbb E_\mu \left[G^I \right]
		+ \frac{1}{\beta_n}D(\mu\|\Pi)
	 \right\}
	 \geq \inf_{\mu \in \mathcal P(N)}
	 \left\{ 
	 	 \mathbb E_{\mu}[f] + 
	 	 \mathbb E_\mu[V]
		\right\} $$
and
	 	$$\limsup_{n \to \infty}
	\inf_{\mu \in \mathcal P(N)}
	 \left\{ 
	 	\mathbb E_\mu \left[ f \right]
	 	+
		\mathbb E_\mu \left[V_n \right]
		+ \lambda_n 		 
		\mathbb E_\mu \left[G^I \right]
		+ \frac{1}{\beta_n}D(\mu\|\Pi)
	 \right\}
	 \leq \inf_{\mu \in \tilde{\mathcal N}}
	 \left\{
	 	 \mathbb E_{\mu}[f] 
	 	 + \mathbb E_\mu[V]
		\right\} .$$
We shall think of $N$ as included
in $\mathcal P(N)$
by the application $z \mapsto \delta_z$. 
Then, by the continuity of $V$ and $f$ and as
we are assuming that elements
of $N$ are approximated by
elements of $\tilde{\mathcal N}$ we know that
	$$\inf_{\mu \in \tilde{\mathcal N}} 
	 \left\{ \mathbb E_\mu[f]
	 +\mathbb E_{\mu}[V] \right\}	
	 =
	 \inf_{\mu \in \tilde{\mathcal N} \cup N}
	 \left\{ \mathbb E_{\mu}[f]
	 +\mathbb E_\mu[V]
	 \right\}	.$$	 
As the infimum is achieved in $N$ we get
	$$\inf_{\mu \in \tilde{\mathcal N}} 
	 \left\{ \mathbb E_\mu[f]
	 +\mathbb E_{\mu}[V] \right\}	
	 =\inf_{x \in N} 
	 \left\{ f(x) + V(x)\right\}	
	 =
	 \inf_{\mu \in \mathcal P(N)} 
	 \left\{ \mathbb E_\mu[f]
	 +\mathbb E_{\mu}[V] \right\}$$
concluding the proof.	 
\end{proof}	
	
\end{theorem}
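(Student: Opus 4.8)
The plan is to run the same argument as for Theorem~\ref{theorem: laplace} and Theorem~\ref{theorem: conditionalgas}, specialised to a ``one particle'' system. First I would apply Lemma~\ref{lemma: legendre} to the Polish probability space $(N,\Pi)$ with $g = \beta_n(f + V_n + \lambda_n G^I)$, which is bounded from below because $G^E$ and $G^I$ are lower semicontinuous on the compact spaces $N \times M$ and $N$, hence bounded below. This gives
$$\frac{1}{\beta_n}\log \int_N e^{-\beta_n f}\, d\gamma^c_n = -\inf_{\mu \in \mathcal P(N)} J_n(\mu), \qquad J_n(\mu) = \mathbb E_\mu[f] + \mathbb E_\mu[V_n] + \lambda_n \mathbb E_\mu[G^I] + \tfrac{1}{\beta_n} D(\mu \| \Pi),$$
so it suffices to prove $\inf_\mu J_n(\mu) \to \inf_{x \in N}\{f(x) + V(x)\}$. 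I would establish this via a \emph{lower limit bound}, $\liminf_n \inf_\mu J_n(\mu) \geq \inf_{x}\{f(x)+V(x)\}$, and an \emph{upper limit bound}, $\limsup_n \inf_\mu J_n(\mu) \leq \inf_{x}\{f(x)+V(x)\}$.

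For the lower limit bound, take an increasing sequence $\{n_j\}$ and near-minimizers $\mu_j \in \mathcal P(N)$, assuming (as one may) that $\lim_j J_{n_j}(\mu_j)$ exists and is finite. Since $N$ is compact, $\mathcal P(N)$ is compact, so after extracting a subsequence $\mu_j \to \mu$. Then $\mathbb E_{\mu_j}[f] \to \mathbb E_\mu[f]$ because $f$ is bounded continuous; the entropy term is $\geq 0$ and $\lambda_{n_j}\mathbb E_{\mu_j}[G^I] \geq \lambda_{n_j}\min_N G^I \to 0$, so neither hurts in the limit. The one delicate term is $\mathbb E_{\mu_j}[V_{n_j}]$, which by Tonelli (legitimate since $G^E$ is bounded below) equals $\int_{N \times M} G^E\, d(\mu_j \otimes \nu_{n_j})$; using that $\mu_j \otimes \nu_{n_j} \to \mu \otimes \nu$ weakly in $\mathcal P(N \times M)$ and that $G^E$ is lower semicontinuous and bounded below, one obtains $\liminf_j \mathbb E_{\mu_j}[V_{n_j}] \geq \int_{N \times M} G^E\, d(\mu \otimes \nu) = \mathbb E_\mu[V]$ — the same lower-semicontinuity-of-integration fact already used in Proposition~\ref{proposition: k-body}. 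Adding up, $\lim_j J_{n_j}(\mu_j) \geq \mathbb E_\mu[f + V] \geq \inf_{\mu' \in \mathcal P(N)} \mathbb E_{\mu'}[f+V] = \inf_{x \in N}\{f(x)+V(x)\}$, the last equality holding because $f + V$ is continuous on the compact set $N$.

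For the upper limit bound, fix any $\mu \in \tilde{\mathcal N}$ and bound $\inf_{\mu'} J_n(\mu') \leq J_n(\mu)$. Writing $\mathbb E_\mu[V_n] = \int_M \left( \int_N G^E(x,y)\, d\mu(x) \right) d\nu_n(y)$ and noting that, by definition of $\tilde{\mathcal N}$, the inner integral is a continuous (hence bounded) function of $y$ on the compact $M$, the weak convergence $\nu_n \to \nu$ yields $\mathbb E_\mu[V_n] \to \mathbb E_\mu[V]$; meanwhile $\lambda_n \mathbb E_\mu[G^I] \to 0$ and $\tfrac{1}{\beta_n} D(\mu \| \Pi) \to 0$ because $\int_N G^I\, d\mu < \infty$, $D(\mu\|\Pi) < \infty$, $\lambda_n \to 0$ and $\beta_n \to \infty$. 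Hence $\limsup_n \inf_{\mu'} J_n(\mu') \leq \mathbb E_\mu[f] + \mathbb E_\mu[V]$ for every $\mu \in \tilde{\mathcal N}$, that is, $\limsup_n \inf_{\mu'} J_n(\mu') \leq \inf_{\mu \in \tilde{\mathcal N}}\{\mathbb E_\mu[f] + \mathbb E_\mu[V]\}$. Finally I would identify this infimum with $\inf_{x \in N}\{f(x)+V(x)\}$: the inequality $\geq$ is immediate since any $\mu \in \tilde{\mathcal N} \subset \mathcal P(N)$ has $\mathbb E_\mu[f+V] \geq \inf_N (f+V)$, and $\leq$ follows from the approximation hypothesis, since for each $z \in N$ a sequence $\mu_m \to \delta_z$ in $\tilde{\mathcal N}$ gives $\mathbb E_{\mu_m}[f] + \mathbb E_{\mu_m}[V] \to f(z) + V(z)$ by boundedness and continuity of $f$ and $V$.

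I expect the main obstacle to be the environment term in the lower limit bound: one must resist analysing $\mu_j$ and $\nu_{n_j}$ separately and instead pass to the product space $\mathcal P(N \times M)$, where weak convergence of the products together with lower semicontinuity of $G^E$ (and its lower bound, which is where compactness of $N \times M$ is used) delivers the inequality. The self-interaction term $\lambda_n G^I$ is a minor nuisance, but it must be handled consistently on both sides — harmless in the lower bound because it is bounded below and $\lambda_n \to 0$, and vanishing in the upper bound because it is finite on $\tilde{\mathcal N}$ and $\lambda_n \to 0$.
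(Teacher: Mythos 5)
Your proposal is correct and follows essentially the same route as the paper: the same Legendre-transform reduction, a lower bound via compactness of $\mathcal P(N)$ and lower semicontinuity of $\int G^E\,d(\mu\otimes\nu)$, an upper bound by testing with $\mu\in\tilde{\mathcal N}$ and the Fubini swap that exploits continuity of $y\mapsto\int_N G^E(x,y)\,d\mu(x)$, and the final identification of $\inf_{\tilde{\mathcal N}}$ with $\inf_N\{f+V\}$ via the Dirac approximation hypothesis. The paper leaves the two limit bounds as "the same ideas as in Theorems \ref{theorem: laplace} and \ref{theorem: conditionalgas}"; you have simply written out those details, correctly.
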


\subsection{A Coulomb gas on a Riemannian manifold}
\label{subsection: riemannian}

Let $(M,g)$ be a compact oriented $n$-dimensional
Riemannian manifold without boundary where
$g$ denotes the Riemannian metric.
We shall define a 
continuous function
$G:M \times M \to ( - \infty, \infty]$ naturally
associated to the Riemannian structure of $M$.
This function along with
the normalized volume form $\pi$ of $(M,g)$
will allow us to define the Gibbs measures
$\gamma_n$ of (\ref{eq: gibbs}) and
will put us in the context of Theorem
\ref{theorem: laplace}.

For this we establish some notation.
A signed measure $\Lambda$ will be called a
differentiable signed measure 
if it is given by an $n$-form or
equivalently if it has a differentiable
density with respect to $\pi$.
From now on we shall identify $\Omega^n(M)$ with the
space of differentiable signed measures.
Denote by $\Delta: C^\infty(M) \to \Omega^n(M)$ 
the Laplacian
operator, i.e. $\Delta = d*d$ where $*$
is the Hodge star operator or, equivalently,
$\Delta f = \nabla^2 f\, d\pi$ where $\nabla^2$
is the Laplace-Beltrami operator.
The function $G$
we will be interested in
is given by
the following result.

\begin{proposition}[Green function]
\label{proposition: green}

Take any differentiable signed measure $\Lambda$.
Then, there exists
a symmetric continuous function
$G:M \times M \to ( - \infty, \infty]$ such that
for every $x \in M$ the function
$G_x: M \to ( - \infty, \infty]$ defined
by $G_x(y) = G(x,y)$ is integrable with respect to
$\pi$ and
	$$\Delta G_x = -\delta_x + \Lambda.$$
More explicitly, the previous equality can be written
as follows. For every $f \in C^\infty(M)$
we have
	$$\int_M G_x \, \Delta f = -f(x) + 
	\int_M f  d \Lambda.$$
Such a function will be called a Green
function associated to $\Lambda$.
Furthermore $G$ is integrable with respect 
to $\pi \otimes \pi$. If $\mu$ is a 
differentiable signed measure
then $\psi: M \to \mathbb R$
defined by $\psi(x)=\int_M G(x,y)d\mu(y)$ belongs
to $C^\infty(M)$ and
	$$\Delta \psi = - \mu +  \mu(M) \Lambda.$$

In particular, we can get that
$G$ is bounded from below,
$\int_M G_x d\Lambda$ does not depend on $x \in M$
and a Green function
associated to $\Lambda$  
is unique up to an additive constant.

\begin{proof}

This result is well known if $\Lambda = \pi$.
See for instance \cite[Chapter 4]{aubin}.
Then if $H$ is a Green function associated to $\pi$
we define
$\phi \in C^{\infty}(M)$ by
$\phi(x) = \int_M H(x,y) d\Lambda(y)$ 
and 
the function
 $G: M \times M \to ( - \infty, \infty] $
given by $G(x,y) = H(x,y) - \phi(x) - \phi(y)$ 
is a Green function associated to $\Lambda$.  
 
\end{proof}
\end{proposition}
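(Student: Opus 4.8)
The plan is to reduce everything to the classical case $\Lambda=\pi$ and then correct by an explicit additive term, exactly as the statement suggests. First I would quote the standard construction of the Green function of the Laplace--Beltrami operator on a compact Riemannian manifold without boundary (see \cite[Chapter 4]{aubin}): there is a symmetric function $H:M\times M\to(-\infty,\infty]$, smooth off the diagonal, with the usual singularity on the diagonal (comparable to $d(x,y)^{2-\dim M}$ when $\dim M\ge 3$, logarithmic when $\dim M=2$, bounded when $\dim M=1$), such that
$$\int_M H_x\,\Delta f=-f(x)+\int_M f\,d\pi\qquad\text{for all }f\in C^\infty(M),\ x\in M.$$
Because this singularity tends to $+\infty$ and is integrable near the diagonal, $H$ is bounded from below, $H_x\in L^1(\pi)$ for each $x$, and $H\in L^1(\pi\otimes\pi)$; these properties will pass to $G$.

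Next I would note that $\Delta G_x=-\delta_x+\Lambda$ forces $\Lambda(M)=1$, since $\int_M\Delta f=0$ for every $f\in C^\infty(M)$ by Stokes' theorem, so I treat this normalization as part of the hypothesis and write $\Lambda=h\,\pi$ with $h$ its smooth density. Put $\phi(x)=\int_M H(x,y)\,d\Lambda(y)$. Using Fubini (legitimate since $H\in L^1(\pi\otimes\pi)$ and the smooth factors involved are bounded on the compact $M$), the symmetry of $H$, and the displayed property of $H$ with base point $y$, one checks that $\phi$ is a weak solution of $\Delta\phi=-\Lambda+\pi$; the right-hand side is a smooth $n$-form, so elliptic regularity (Weyl's lemma) gives $\phi\in C^\infty(M)$, in particular $\phi$ bounded. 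Now set $G(x,y)=H(x,y)-\phi(x)-\phi(y)$. Symmetry, continuity into $(-\infty,\infty]$, boundedness from below, and integrability with respect to $\pi$ in each variable and to $\pi\otimes\pi$ are all inherited from $H$ and the boundedness of $\phi$. For the defining identity, expanding $\int_M G_x\,\Delta f$ as $\int_M H_x\,\Delta f-\phi(x)\int_M\Delta f-\int_M\phi\,\Delta f$ and using $\int_M\Delta f=0$ together with the weak equation for $\phi$ yields $\int_M G_x\,\Delta f=-f(x)+\int_M f\,d\Lambda$, i.e. $\Delta G_x=-\delta_x+\Lambda$.

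For the remaining assertions, given a differentiable signed measure $\mu=q\,\pi$ I would set $\psi(x)=\int_M G(x,y)\,d\mu(y)$ and run the same Fubini-plus-regularity argument: $\psi$ is a weak solution of $\Delta\psi=-\mu+\mu(M)\Lambda$, whose right-hand side is a smooth $n$-form, hence $\psi\in C^\infty(M)$ and the identity holds in $\Omega^n(M)$. Specializing $\mu=\Lambda$ (so $\mu(M)=1$) gives that $x\mapsto\int_M G_x\,d\Lambda$ satisfies $\Delta(\cdot)=-\Lambda+\Lambda=0$, hence is harmonic on the compact manifold and therefore constant. Lastly, if $G^1,G^2$ are two Green functions associated to $\Lambda$, then for each fixed $x$ the function $y\mapsto G^1(x,y)-G^2(x,y)$ is weakly harmonic, hence smooth and constant; comparing with the analogous statement in the other variable and using symmetry shows this constant is independent of $x$, so $G^1-G^2$ is a global additive constant. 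The only genuinely non-elementary ingredient is the classical existence and singularity estimate for $H$, which is quoted; the single delicate point is justifying the Fubini exchanges and the passage from weak solutions of $\Delta u=(\text{smooth }n\text{-form})$ to smooth functions via elliptic regularity, after which everything is routine verification.
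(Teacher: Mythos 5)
Your proposal is correct and follows essentially the same route as the paper: quote the classical existence of the Green function $H$ for $\Lambda=\pi$ from Aubin, set $\phi(x)=\int_M H(x,y)\,d\Lambda(y)$, and define $G(x,y)=H(x,y)-\phi(x)-\phi(y)$. The paper leaves the verification implicit, whereas you carry it out (including the correct observation that the normalization $\Lambda(M)=1$ is forced), but there is no substantive difference in approach.
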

\ \\
We fix
a differentiable signed measure $\Lambda$.
For simplicity we choose the Green function $G$
associated to $\Lambda$ that satisfies
$\int_M G_x d\Lambda = 0$ for every $x \in M$.
Define
$W_n: M^n \to \mathbb R \cup \{\infty\}$
by $W_n(x_1,...,x_n) 
= \frac{1}{n^2}\sum_{i<j}^n G(x_i,x_j)$
and
$W: \mathcal P(M) \to ( - \infty, \infty] $
by $W(\mu) = 
\frac{1}{2}\int_{M \times M} G(x,y) d\mu(x) d\mu(y)$.
Because $G$ is bounded from below
and lower semicontinuous
we may apply 
Proposition \ref{proposition: k-body} about the k-body interaction. In particular,
we obtain that
$\{W_n\}_{n \in \mathbb N}$ is a
{\it stable sequence} \ref{stablesequence},
$W$ is lower semicontinuous and 
$(\{W_n\}_{n \in \mathbb N}, W)$
satisfies the
{\it lower limit assumption}
\ref{lowerlimitassumption}
and the {\it upper limit assumption}
\ref{upperlimitassumption}.

We can prove a strong form of the regularity assumption
for $W$.

\begin{proposition}[Regularity property
of the Green energy]

\label{proposition: regularity of the green energy}
 
Let $\mu \in \mathcal P(M)$. There exists
a sequence $\{\mu_n\}_{n \in \mathbb N}$
of differentiable probability measures such that
$\mu_n \to \mu$ and 
$W(\mu_n) \to W(\mu)$.

\begin{proof}
We can assume $W(\mu) < \infty$, otherwise
any sequence $\{\mu_n\}_{n \in \mathbb N}$
of differentiable probability measures such that
$\mu_n \to \mu$
will satisfy $W(\mu_n) \to W(\mu)$ 
due to the lower semicontinuity of $W$.

Using the proof of \cite[Lemma 3.13]{cbeltran} 
for the case of probability measures we know
that the result is true for
the Green function $H$ associated to $\pi$.
For general $\Lambda$,
take $\phi \in C^\infty(M)$ 
defined by
$\phi(x) = \int_M H(x,y) d\Lambda(y)$ as in 
the proof of Proposition \ref{proposition: green}.
Then $G: M \times M \to ( - \infty, \infty]$
given by $G(x,y) = H(x,y) - \phi(x) - \phi(y)$
is a Green function for $\Lambda$ and
for every $\mu \in \mathcal P(M)$ we have
	$$\int_{M \times M} G(x,y) d\mu(x) d\mu(y)
	= \int_{M \times M} H(x,y) d\mu(x) d\mu(y)
	- 2\int_{M} \phi \, d\mu.$$
From this relation and the result for $H$ we get
the result for $G$.

\end{proof}
 
\end{proposition}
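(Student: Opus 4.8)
The plan is to separate two regimes according to whether $W(\mu)$ is finite. If $W(\mu) = \infty$, it suffices to take any sequence $\{\mu_n\}_{n \in \mathbb N}$ of differentiable probability measures with $\mu_n \to \mu$ in $\mathcal P(M)$ — such sequences exist on a compact manifold, e.g. by convolving $\mu$ with a smooth approximate identity in local charts — and invoke the lower semicontinuity of $W$ obtained from Proposition \ref{proposition: k-body}: it forces $\liminf_{n \to \infty} W(\mu_n) \geq W(\mu) = \infty$, hence $W(\mu_n) \to W(\mu)$. So I would assume henceforth that $W(\mu) < \infty$.

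For $W(\mu) < \infty$, the first step is to reduce to the model Green function. Let $H$ be the Green function associated to the normalized volume form $\pi$, as in Proposition \ref{proposition: green}. The desired statement, with $G$ replaced by $H$ and $W$ replaced by $\nu \mapsto \frac{1}{2}\int_{M \times M} H\, d\nu\, d\nu$, is exactly \cite[Lemma 3.13]{cbeltran} specialized to probability measures; I would simply apply it to get a sequence $\{\mu_n\}_{n \in \mathbb N}$ of differentiable probability measures with $\mu_n \to \mu$ and $\int_{M \times M} H\, d\mu_n\, d\mu_n \to \int_{M \times M} H\, d\mu\, d\mu$.

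The second step transfers this to the fixed differentiable signed measure $\Lambda$. Following the construction in the proof of Proposition \ref{proposition: green}, set $\phi(x) = \int_M H(x,y)\, d\Lambda(y) \in C^\infty(M)$, so that (up to the harmless additive renormalization enforcing $\int_M G_x\, d\Lambda = 0$) $G(x,y) = H(x,y) - \phi(x) - \phi(y)$. For every $\nu \in \mathcal P(M)$ one then has
$$\int_{M \times M} G(x,y)\, d\nu(x)\, d\nu(y) = \int_{M \times M} H(x,y)\, d\nu(x)\, d\nu(y) - 2\int_M \phi\, d\nu,$$
and since $\phi$ is bounded and continuous, $\nu \mapsto \int_M \phi\, d\nu$ is continuous for the weak topology. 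Applying this identity along the sequence $\{\mu_n\}_{n \in \mathbb N}$ produced in the first step yields $W(\mu_n) \to W(\mu)$, which completes the argument.

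The main obstacle is confined entirely to the first step: the mollification argument producing differentiable approximants with convergence of the possibly singular $H$-energy, which is where the specific analytic structure of the Green function on a compact Riemannian manifold enters. This is precisely the content of \cite[Lemma 3.13]{cbeltran}, so once it is invoked the passage from $\pi$ to a general $\Lambda$ is a purely algebraic manipulation relying only on the smoothness (hence boundedness and continuity) of $\phi$.
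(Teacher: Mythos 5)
Your proposal is correct and follows essentially the same route as the paper: reduce to $W(\mu)<\infty$ by lower semicontinuity, invoke \cite[Lemma 3.13]{cbeltran} for the Green function $H$ associated to $\pi$, and transfer to general $\Lambda$ via the identity $\int G\,d\mu\,d\mu=\int H\,d\mu\,d\mu-2\int\phi\,d\mu$ with $\phi$ smooth. The only additions are harmless elaborations (existence of smoothing sequences in the infinite-energy case, weak continuity of $\nu\mapsto\int\phi\,d\nu$, and the remark about the additive normalization constant), all consistent with the paper's argument.
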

\ \\
Then, $(\{W_n\}_{n \in \mathbb N}, W)$ is a nice model
where Theorem \ref{theorem: laplace}
and the results of Subsection
\ref{subsection: conditional} can be used.

\begin{corollary}[Macroscopic limit]
$W$ is the zero temperature macroscopic limit
and the positive temperature macroscopic limit
of $\{W_n\}_{n \in \mathbb N}$, i.e.
  $(\{W_n\}_{n \in \mathbb N}, W)$
 satisfies all the conditions
of Theorem \ref{theorem: laplace}.
Additionally the results of Subsection
\ref{subsection: conditional} about
the Conditional Gibbs measure may be applied.

\end{corollary}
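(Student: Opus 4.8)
The plan is to assemble facts that are already in hand. First, since $M$ is compact, $\mathcal P(M)$ is compact, so every sequence in $\mathcal P(M)$ is automatically relatively compact; thus $\{W_n\}_{n\in\mathbb N}$ is trivially a confining sequence \ref{confiningsequence}. Next I would invoke the observation made just before the statement: applying Proposition \ref{proposition: k-body} to the Green function $G$, which is bounded from below and lower semicontinuous, shows at once that $\{W_n\}_{n\in\mathbb N}$ is a stable sequence \ref{stablesequence}, that $W$ is lower semicontinuous, and that the pair $(\{W_n\}_{n\in\mathbb N},W)$ satisfies both the lower limit assumption \ref{lowerlimitassumption} and the upper limit assumption \ref{upperlimitassumption}. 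By definition this already says that $W$ is the positive temperature macroscopic limit of $\{W_n\}_{n\in\mathbb N}$, and so the hypotheses of Theorem \ref{theorem: laplace} for $\beta<\infty$ are met.

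It then remains only to check the regularity assumption \ref{regularityassumption}, since this together with \ref{lowerlimitassumption} and the confining property above gives that $W$ is also the zero temperature macroscopic limit, completing the verification of all hypotheses of Theorem \ref{theorem: laplace}. Given $\mu\in\mathcal P(M)$ with $W(\mu)<\infty$, I would take the sequence $\{\mu_n\}_{n\in\mathbb N}$ of differentiable probability measures with $\mu_n\to\mu$ and $W(\mu_n)\to W(\mu)$ furnished by Proposition \ref{proposition: regularity of the green energy}; then $\limsup_{n\to\infty}W(\mu_n)\le W(\mu)$ is immediate, so the only point requiring an argument is that each $\mu_n$ lies in the set $\mathcal N$ of \eqref{eq: nice}. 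For this, note that a differentiable probability measure has a continuous — hence bounded — density with respect to the normalised volume form $\pi$, which makes $D(\mu_n\|\pi)<\infty$; and by Proposition \ref{proposition: k-body} (indeed by the exact evaluation of $\mathbb E_{\mu_n^{\otimes_m}}[W_m]$ given in its proof, which is the content of the upper limit assumption \ref{upperlimitassumption} applied at $\mu_n$) one has $\limsup_{m\to\infty}\mathbb E_{\mu_n^{\otimes_m}}[W_m]\le W(\mu_n)$. Hence $\mu_n\in\mathcal N$ and \ref{regularityassumption} follows.

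For the applicability of Subsection \ref{subsection: conditional}, take $G^E=G^I=G$, so that $W^I=W$. The only structural hypothesis of Theorem \ref{theorem: conditionalgas} — that every $\mu$ with $W^I(\mu)<\infty$ be approximable within $\tilde{\mathcal N}$ by a sequence along which $W^I$ converges — is again supplied by the sequence of Proposition \ref{proposition: regularity of the green energy}: for a differentiable probability measure $\mu_n$ the potential $y\mapsto\int_M G(x,y)\,d\mu_n(x)$ is smooth by Proposition \ref{proposition: green}, hence continuous, and $D(\mu_n\|\pi)<\infty$ as above, so $\mu_n\in\tilde{\mathcal N}$; the same remark handles the particle-in-a-varying-environment theorem. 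I would flag here the one thing that is not purely formal: the continuity of the limiting external potential $x\mapsto\int_M G(x,y)\,d\nu(y)$ (respectively $V$), which those theorems also require, is not forced by the Riemannian structure for an arbitrary environment $\nu$, but it does hold, for instance, whenever $\nu$ is itself differentiable, once more by Proposition \ref{proposition: green}. With that proviso the results of Subsection \ref{subsection: conditional} transfer directly. The genuine obstacle has thus already been dealt with in Proposition \ref{proposition: regularity of the green energy}; the rest is a matter of matching the hypotheses of the earlier theorems.
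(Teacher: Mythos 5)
Your proposal is correct and matches the paper's (implicit) argument: the paper leaves this corollary without a written proof precisely because it is the assembly you describe — compactness of $M$ gives the confining property for free, Proposition \ref{proposition: k-body} gives stability, \ref{lowerlimitassumption} and \ref{upperlimitassumption}, and Proposition \ref{proposition: regularity of the green energy} together with the boundedness of smooth densities on a compact manifold gives \ref{regularityassumption} and membership in $\tilde{\mathcal N}$. Your added caveat about the continuity of $x\mapsto\int_M G^E(x,y)\,d\nu(y)$ being a hypothesis on the environment rather than a consequence of the Riemannian structure is a fair and accurate reading of Theorem \ref{theorem: conditionalgas}.
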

\ \\

Now we shall enunciate a theorem that
is our main motivation for choosing this model.
Remember the definitions of $i_n$,
 (\ref{eq: inclusion}),
and $\mathbb P_n$, 
(\ref{eq: gibbs probability finite beta}).
Let $\{X_n\}_{n \in \mathbb N}$
be a sequence of random variables
taking values in $\mathcal P(M)$ such that, for every 
$n \in \mathbb N$,
$X_n$ has law $i_n (\mathbb P_n) $.
By studying the minimizers of the
{\it free energy} $F$ defined in (\ref{eq: free})
 we can understand
the possible limit points of 
$\{X_n\}_{n \in \mathbb N}$.
In particular, if $F$ attains its minimum
at a unique probability measure
$\mu_{eq}$, we get
	$$X_n     
	 \xrightarrow[\: n \to \infty \:]{a.s.}
	 \mu_{eq}.$$
This is a consequence of Borel-Cantelli lemma
and the large deviation principle 
in Corollary \ref{cor: deviations}.

We specialize
to the case of dimension two
and finite $\beta$
because the minimizer of $F$
has
a nice geometric meaning in this case. 

\begin{theorem}[Minimizer of the free energy]
\label{theorem: meanfieldequation}
Let $\rho$ be a strictly positive differentiable
function such that
\begin{equation} \label{eq: meanfield}
	 \Delta \log \rho  = 
	\beta \, \mu_{eq} - \beta\Lambda
\end{equation}
where $\mu_{eq}$ denotes
the probability measure defined by
$d\mu_{eq} = \rho \, d\pi$ (see  \cite{meanfield}
for the existence).
Then $F(\mu_{eq}) < F(\mu)$ 
for every $\mu \in \mathcal P(M)$ different
from $\mu_{eq}$.
In particular, there exists only
one strictly positive differentiable function
that satisfies (\ref{eq: meanfield}).
\end{theorem}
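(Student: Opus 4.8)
The plan is to show that $F$ is strictly convex on the set where it is finite, so that a critical point is automatically the unique global minimizer, and then to verify that $\mu_{eq}$ as defined by \eqref{eq: meanfield} is exactly such a critical point. Recall $F = W + \frac{1}{\beta} D(\cdot \| \pi)$ with $W(\mu) = \frac{1}{2}\int_{M\times M} G(x,y)\, d\mu(x)d\mu(y)$. The relative entropy $D(\cdot \| \pi)$ is well known to be strictly convex on $\mathcal{P}(M)$ (it is finite and strictly convex along any segment joining two distinct absolutely continuous measures, and $+\infty$ otherwise). For $W$, the key point is that the bilinear form $(\mu,\nu) \mapsto \int_{M\times M} G\, d\mu\, d\nu$ is positive semidefinite on signed measures of total mass zero: this follows from the defining property $\Delta G_x = -\delta_x + \Lambda$ in Proposition \ref{proposition: green}, since for a differentiable signed measure $\sigma$ with $\sigma(M)=0$ one computes $\int_{M\times M} G\, d\sigma\, d\sigma = \int_M \psi_\sigma\, d\sigma$ where $\psi_\sigma(x) = \int_M G(x,y)d\sigma(y)$ satisfies $\Delta\psi_\sigma = -\sigma$ (the $\Lambda$-term drops because $\sigma(M)=0$), and an integration by parts (Green's identity, $\int_M \psi\,\Delta\varphi = \int_M \varphi\,\Delta\psi$ extended appropriately) gives $\int_M \psi_\sigma\, d\sigma = \int_M |\nabla \psi_\sigma|^2\, d\pi \geq 0$. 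Hence $W$ is convex, and $F$ is strictly convex on $\{F < \infty\}$.

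Next I would identify the Euler–Lagrange condition. For a competitor $\mu \in \mathcal{P}(M)$ with $F(\mu) < \infty$, consider $\mu_t = (1-t)\mu_{eq} + t\mu$ and differentiate $t \mapsto F(\mu_t)$ at $t=0^+$. The derivative is $\int_M \left( \psi_{eq} + \frac{1}{\beta}\log\rho \right) d(\mu - \mu_{eq})$ where $\psi_{eq}(x) = \int_M G(x,y)\, d\mu_{eq}(y)$. Now \eqref{eq: meanfield} says $\Delta \log\rho = \beta\mu_{eq} - \beta\Lambda$, while Proposition \ref{proposition: green} gives $\Delta\psi_{eq} = -\mu_{eq} + \mu_{eq}(M)\Lambda = -\mu_{eq} + \Lambda$. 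Adding, $\Delta\left(\psi_{eq} + \frac{1}{\beta}\log\rho\right) = 0$, so by connectedness of $M$ the function $c := \psi_{eq} + \frac{1}{\beta}\log\rho$ is a constant. Therefore the directional derivative equals $c\cdot(\mu(M) - \mu_{eq}(M)) = 0$ for every admissible direction. By convexity of $F$ this makes $\mu_{eq}$ a global minimizer, and by the strict convexity established above the minimizer is unique, giving $F(\mu_{eq}) < F(\mu)$ for all $\mu \neq \mu_{eq}$. Finally, if $\rho_1, \rho_2$ were two strictly positive differentiable solutions of \eqref{eq: meanfield}, both $\rho_1\, d\pi$ and $\rho_2\, d\pi$ would be the unique minimizer $\mu_{eq}$, hence $\rho_1 = \rho_2$ $\pi$-a.e., and by continuity everywhere.

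The main obstacle is the justification of the integration-by-parts / positivity step for $W$ at the level needed, and the differentiation of $F$ along segments when $\mu$ is a general probability measure (not differentiable, possibly with $W(\mu)$ finite but $\psi_{eq}$ only integrable rather than smooth). One clean way around this is to restrict the variational comparison to differentiable $\mu$ first — where Proposition \ref{proposition: green} guarantees $\psi_{eq}, \psi_\mu \in C^\infty(M)$ and Green's identity is unproblematic — proving $F(\mu_{eq}) \le F(\mu)$ there, and then extend to all of $\mathcal{P}(M)$ using Proposition \ref{proposition: regularity of the green energy}: any $\mu$ with $F(\mu)<\infty$ is approximated by differentiable $\mu_n \to \mu$ with $W(\mu_n) \to W(\mu)$, while lower semicontinuity of $D(\cdot\|\pi)$ handles the entropy term, yielding $F(\mu_{eq}) \le \liminf F(\mu_n) \le F(\mu)$ is not quite enough for strictness, so strictness is instead recovered from the strict convexity argument applied to the midpoint $\frac{1}{2}(\mu_{eq}+\mu)$. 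I would also double-check the sign conventions in $\Delta = d{*}d$ versus the Laplace–Beltrami operator to make sure the quadratic form comes out nonnegative rather than nonpositive; this is the kind of bookkeeping that is easy to get backwards but not conceptually hard.
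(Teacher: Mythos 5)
Your proposal is correct and follows essentially the same route as the paper: strict convexity of $F$ (convexity of $W$ via Green's identity together with strict convexity of the relative entropy) combined with the vanishing of the derivative of $t \mapsto F(\mu_t)$ at $t=0$ along segments $\mu_t = t\mu + (1-t)\mu_{eq}$. The only cosmetic difference is that you deduce the constancy of $\psi_{eq} + \frac{1}{\beta}\log\rho$ from harmonicity and connectedness of $M$, whereas the paper integrates the mean-field equation against $G(x,\cdot)$ and uses the normalization $\int_M G_x\, d\Lambda = 0$; the technical points you flag (differentiating $F$ along segments, approximation by differentiable measures, recovering strictness from convexity) are handled in the paper exactly as you suggest.
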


\begin{remark}[Scalar curvature relation]
The motivation
for studying a $2$-dimensional manifold
is that $\mu_{eq}$ has a nice geometrical
interpretation if we choose 
adequate
$\Lambda$ and $\beta$.

We shall suppose that $\chi(M)$, 
the Euler characteristic of $M$, is different from
 zero.
If $\bar g$ is any metric, we denote by $R_{\bar g}$
the scalar curvature of $\bar g$.
Choose
	$$d\Lambda = \frac{ R_g \, d  \pi} 
	{4\pi \chi (M)}.$$		
It can be seen that if $\bar g = \rho g$,
where $\int_M \rho \, d\pi = 1$, then
 	$$\Delta \log \rho = R_g d\pi - R_{\bar g} 
 	\rho \, d\pi .$$
With this identity we can prove that 
$\rho$ is a solution to
	\begin{equation*}
	R_{\bar g} =
	\left( 4\pi \chi(M) + \beta\right) 
 	R_g \rho^{-1} - \beta 			
	\end{equation*}
where  	$\bar g = \rho \, g$ if and only if
$\rho$ is a solution to
 	\begin{equation*} 
	\Delta \log \rho  = 
	\beta \, \mu_{eq} - \beta\Lambda 
	\end{equation*}
where 	$d\mu_{eq} = \rho \, d\pi$.	
 In particular, if $\chi(M) <0$ and
 $\beta = -4 \pi \chi(M)$
 then
 $\bar g$ satisfies
	$$R_{\bar g} =
		4 \pi \chi(M), $$
i.e. $\bar g$ is a metric with constant curvature.
In other words, if
 $\beta = -4 \pi \chi(M)$,
 the empirical measure converges
almost surely to the volume form
of the constant curvature metric conformally
equivalent to the chosen metric.
\end{remark}

The proof of Theorem \ref{theorem: meanfieldequation}
will be based
on the fact that $F$ is strictly convex
and
that we can calculate its derivative.
We begin by proving its convexity.

\begin{proposition}[Convexity of $W$]
$W$ is convex.
\begin{proof}

To prove the convexity it is enough to show that
for every $\mu, \nu \in \mathcal P(M)$ 
\begin{equation}
\label{eq: convexity inequality}
	\frac{1}{2} W(\mu)+
	\frac{1}{2} W(\nu)
	\geq
	W \left(\frac{1}{2}\mu + \frac{1}{2} \nu \right)
\end{equation}
due to the lower semicontinuity of $W$.	
If $\mu$ and $\nu$
are differentiable probability measures this is equivalent to
	$$ \int_M \| \nabla (f - g)\|^2 \, d \pi \geq 0 $$
where
	$f(x) = \int_{M} G (x,y) \, d \mu(y)$
	 and
	$g(x) = \int_{M} G (x,y)\, d \nu(y) $.
For general $\mu$ and $\nu$ we can conclude
using
Proposition
\ref{proposition: regularity of the green energy},
and taking lower limits in
the inequality 
(\ref{eq: convexity inequality}) 
for differentiable
probability measures.
\end{proof}

\end{proposition}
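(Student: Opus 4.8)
The plan is to deduce convexity from midpoint convexity, and midpoint convexity from the case of differentiable measures. Since $W$ is lower semicontinuous (Proposition~\ref{proposition: k-body} applied to the Green function $G$, which is lower semicontinuous and bounded from below), it suffices to prove
$$
\frac12 W(\mu) + \frac12 W(\nu) \;\geq\; W\left(\frac{\mu+\nu}{2}\right)
\qquad\text{for all }\mu,\nu\in\mathcal P(M).
$$
Iterating this inequality yields $W(\lambda\mu+(1-\lambda)\nu)\le \lambda W(\mu)+(1-\lambda)W(\nu)$ for every dyadic $\lambda\in[0,1]$; for a general weight one approximates $\lambda$ by dyadic rationals and uses lower semicontinuity of $W$ along the continuous segment $t\mapsto t\mu+(1-t)\nu$, the case where $W(\mu)$ or $W(\nu)$ is infinite being trivial. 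Moreover, by Proposition~\ref{proposition: regularity of the green energy} we may choose differentiable probability measures $\mu_k\to\mu$ and $\nu_k\to\nu$ with $W(\mu_k)\to W(\mu)$ and $W(\nu_k)\to W(\nu)$; then $\frac12\mu_k+\frac12\nu_k\to\frac12\mu+\frac12\nu$, and one more application of lower semicontinuity reduces the midpoint inequality to the case in which $\mu$ and $\nu$ are both differentiable.

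So assume $\mu$ and $\nu$ are differentiable probability measures, and set $f(x)=\int_M G(x,y)\,d\mu(y)$ and $g(x)=\int_M G(x,y)\,d\nu(y)$. By Proposition~\ref{proposition: green}, $f,g\in C^\infty(M)$ and $\Delta f=-\mu+\Lambda$, $\Delta g=-\nu+\Lambda$, so $\mu-\nu=-\Delta(f-g)$. Writing $\langle\alpha,\beta\rangle=\int_{M\times M}G\,d\alpha\,d\beta$ (finite here, since $f,g$ are continuous on the compact manifold $M$) and using the symmetry of $G$ and Fubini, expand $W\left(\frac{\mu+\nu}{2}\right)=\frac18\langle\mu+\nu,\mu+\nu\rangle$ and use $2W(\mu)=\langle\mu,\mu\rangle$, $2W(\nu)=\langle\nu,\nu\rangle$ to obtain
$$
\frac12 W(\mu) + \frac12 W(\nu) - W\left(\frac{\mu+\nu}{2}\right)
= \frac18\,\langle\mu-\nu,\mu-\nu\rangle
= \frac18\int_M (f-g)\,d(\mu-\nu).
$$
Substituting $\mu-\nu=-\Delta(f-g)$ and integrating by parts on the closed manifold $M$ (recall $\Delta(f-g)=\nabla^2(f-g)\,d\pi$), this equals $\frac18\int_M\|\nabla(f-g)\|^2\,d\pi\ge 0$, which is exactly the midpoint inequality for differentiable measures.

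I do not expect a genuine obstacle: the core is a short computation once the reductions are in place. The three points deserving care are (i) the algebraic identity reducing the midpoint deficit to $\frac18\int_M(f-g)\,d(\mu-\nu)$, which uses only the symmetry $G(x,y)=G(y,x)$, Fubini, and finiteness of the integrals in the differentiable case; (ii) the integration-by-parts step, which is the standard Green identity on a boundaryless compact manifold; and (iii) correctly applying lower semicontinuity—first to pass from differentiable to arbitrary $\mu,\nu$ via Proposition~\ref{proposition: regularity of the green energy}, and then to upgrade midpoint convexity to full convexity by dyadic approximation. I would carry out the steps in this order: reduce to the midpoint inequality, reduce to the differentiable case, prove the deficit identity and integrate by parts, then reassemble.
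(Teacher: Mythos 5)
Your proof is correct and follows essentially the same route as the paper: reduce to the midpoint inequality via lower semicontinuity, verify it for differentiable measures by rewriting the deficit as $\frac{1}{8}\int_M \|\nabla(f-g)\|^2\, d\pi \geq 0$ using the Green function and integration by parts, and pass to general measures via Proposition \ref{proposition: regularity of the green energy} and lower limits. You simply spell out the algebraic identity, the dyadic upgrade to full convexity, and the Green-identity step that the paper leaves implicit.
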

\ \\
As $D(\cdot \| \pi)$ is strictly convex 
(see \cite[Lemma 1.4.3]{bdupuis}) we obtain that
the {\it free energy} $F$
of parameter $\beta < \infty$ is strictly convex.

Now we calculate the derivative 
of $W$ and the entropy
at $\mu_{eq}$.

\begin{lemma} [Derivative of $W$ and the
entropy]
\label{lemma: derivative}

Let $\mu$ be
any probability measure different from $\mu_{eq}$ 
such that $F(\mu) < \infty$.
Define
	$$\mu_t = t\mu + (1-t)\mu_{eq},\ \ t \in [0,1].$$\\
Then,
$ W(\mu_t)$ and $D(\mu_t \| \pi)$
are differentiable at $t = 0$, and
\begin{equation} \label{eq: derivative energy}
\hspace*{-4cm}
	\frac{d}{dt}  W(\mu_t)|_{t = 0} 				= 
		\int_{M \times M} G(x,y) \, d\mu_{eq} (x) \,
		\left(d \mu(y) - d \mu_{eq}(y)\right)\, ,
\end{equation}		

\begin{equation} \label{eq: derivative entropy}
\hspace*{-6.5cm}
	\frac{d}{dt} D(\mu_t \| \pi)|_{t = 0}		= 		
		\int_{M} \log  \rho(y) 
			\left(d \mu(y) - d \mu_{eq}(y) \right).
\end{equation}

\begin{proof}
To get (\ref{eq: derivative energy}) we just notice that
$W(\mu_t)$ is a polynomial of degree $2$ and
to obtain
(\ref{eq: derivative entropy}) we use
the monotone convergence theorem
as said for instance in
\cite[Proposition 2.11]{bermanThermodynamicalformalism}.

\end{proof}

\end{lemma}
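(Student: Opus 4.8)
The plan is to treat the energy term and the entropy term separately, using throughout that $F(\mu)<\infty$ forces both $W(\mu)<\infty$ and $D(\mu\|\pi)<\infty$, that $\rho$ is a strictly positive smooth function on the compact manifold $M$ (hence bounded above and bounded away from $0$, so that $\log\rho$ is bounded), and that by Proposition \ref{proposition: green} the potential $\psi(x):=\int_M G(x,y)\,d\mu_{eq}(y)$ lies in $C^\infty(M)$, hence is bounded; in particular $W(\mu_{eq})=\tfrac12\int_M\psi\,d\mu_{eq}<\infty$.

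For (\ref{eq: derivative energy}) the point is simply that $t\mapsto W(\mu_t)$ is a quadratic polynomial. Writing $\delta=\mu-\mu_{eq}$ and using the symmetry of $G$ together with Tonelli's theorem (applicable since $G$ is bounded below),
\[
2W(\mu_t)=\int_{M\times M}G\,d\mu_{eq}\,d\mu_{eq}+2t\int_{M\times M}G\,d\mu_{eq}\,d\delta+t^{2}\int_{M\times M}G\,d\delta\,d\delta .
\]
I would then check that each coefficient is a finite real number: the first equals $2W(\mu_{eq})$; the second equals $\int_M\psi\,d\delta$, finite because $\psi$ is bounded; the third I expand as $2W(\mu)-2\int_M\psi\,d\mu+2W(\mu_{eq})$, each summand being well defined since $\int G\,d\mu\,d\mu_{eq}=\int_M\psi\,d\mu$ — here it is essential to reduce everything to these four lower-bounded pairings of $G$ rather than integrate $G$ against the signed product measure $\delta\otimes\delta$ directly, since $G$ may take the value $+\infty$ on the diagonal. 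Consequently $W(\mu_t)$ is genuinely polynomial in $t$ on $[0,1]$, so its derivative at $t=0$ is the coefficient of $t$, namely $\int_{M\times M}G(x,y)\,d\mu_{eq}(x)\,(d\mu(y)-d\mu_{eq}(y))$, which is (\ref{eq: derivative energy}).

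For (\ref{eq: derivative entropy}), put $h=d\mu/d\pi$, $\rho=d\mu_{eq}/d\pi$ and $\eta(s)=s\log s$, so $\mu_t$ has $\pi$-density $h_t=(1-t)\rho+th$. Convexity of $\eta$ gives $\eta(h_t)\le(1-t)\eta(\rho)+t\eta(h)$, which shows $D(\mu_t\|\pi)<\infty$ and, more to the point, that the difference quotient $g_t:=t^{-1}\bigl(\eta(h_t)-\eta(\rho)\bigr)$ is nondecreasing in $t$ on $(0,1]$; hence, as $t\downarrow0$, $g_t$ decreases pointwise to the one-sided derivative $\eta'(\rho)(h-\rho)=(1+\log\rho)(h-\rho)$ (valid because $\rho>0$ everywhere). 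Since $g_t\le g_1=\eta(h)-\eta(\rho)\in L^{1}(\pi)$ — both $D(\mu\|\pi)$ and $D(\mu_{eq}\|\pi)$ being finite — the monotone convergence theorem applied to the nonnegative nondecreasing family $g_1-g_t$ yields
\[
\frac{D(\mu_t\|\pi)-D(\mu_{eq}\|\pi)}{t}=\int_M g_t\,d\pi\;\xrightarrow[\,t\downarrow0\,]{}\;\int_M(1+\log\rho)(h-\rho)\,d\pi=\int_M\log\rho\,\,(d\mu-d\mu_{eq}),
\]
the constant $1$ dropping out because $\int_M(h-\rho)\,d\pi=0$; this is (\ref{eq: derivative entropy}), the argument being the one indicated in \cite[Proposition 2.11]{bermanThermodynamicalformalism}.

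I expect the only real obstacle to be the passage to the limit under the integral sign in the entropy computation, and everything there hinges on the convexity of $s\mapsto s\log s$: it simultaneously delivers the monotonicity of the difference quotients $g_t$ and the integrable majorant $\eta(h)-\eta(\rho)$, the latter being finite precisely because of the standing hypothesis $D(\mu\|\pi)<\infty$. The energy part is routine once one is careful to expand everything in terms of the four well-defined pairings of $G$ against $\mu\otimes\mu$, $\mu\otimes\mu_{eq}$, $\mu_{eq}\otimes\mu$ and $\mu_{eq}\otimes\mu_{eq}$.
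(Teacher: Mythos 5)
Your proposal is correct and follows essentially the same route as the paper: the energy identity comes from observing that $t\mapsto W(\mu_t)$ is a quadratic polynomial (you merely spell out the finiteness of the four pairings, using $F(\mu)<\infty$ and the smoothness of $x\mapsto\int_M G(x,y)\,d\mu_{eq}(y)$), and the entropy derivative is obtained by the monotone convergence argument based on the convexity of $s\mapsto s\log s$, which is exactly the argument the paper delegates to \cite[Proposition 2.11]{bermanThermodynamicalformalism}. No gaps; your write-up just makes the paper's two-line sketch explicit.
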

	
And now we are ready to finish the proof
of Theorem 
\ref{theorem: meanfieldequation}.

\begin{proof}[Proof of Theorem 
\ref{theorem: meanfieldequation}]
As in Lemma \ref{lemma: derivative}, let $\mu$ be
any probability measure different from $\mu_{eq}$ 
such that $F(\mu) < \infty$ and
define
	$$\mu_t = t\mu + (1-t)\mu_{eq},\ \ t \in [0,1].$$
Multiply (\ref{eq: meanfield})
by $G(x,y)$ and integrate in one variable
to get

	$$ -\log \rho(y) + \int_M \log \rho(x)
	 \, d \Lambda(x)
		=\beta \int_M G(x,y)\rho(x) \, d \pi(x) .$$
Then, we have that

\begin{align*}
	\frac{d}{dt} & F(\mu_t)|_{t = 0} \\
	& = 
	\int_{M\times M} G(x,y) d \mu_{eq}(x)
		\, \left(d \mu(y) - d \mu_{eq}(y) \right)+
			\frac{1}{\beta}
			\int_{M \times M} \log  \rho(y) 
			\left(d \mu(y) - d \mu_{eq}(y) \right) \\
	&= \int_M
	\left(\int_M G(x,y) \rho(x) \, d \pi(x) +
	\frac{1}{\beta}\log \rho(y)  \right)
		\left(d \mu(y) - d \mu_{eq}(y)\right)		\\
	&= \frac{1}{\beta}\int_M
	\left(\int_M \log  \rho(x)\, d \Lambda(x) \right)
		\left(d \mu(y) - d \mu_{eq}(y) \right)		\\
	&= 	\frac{1}{\beta}
	\left(\int_M \log \rho(x)\, d \Lambda(x) \right)
		\left(\int_M
		\left( d \mu(y) - d \mu_{eq}(y)\right) 
		\right)= 0.
\end{align*}		
This implies, due to the strict convexity of $F(\mu_t)$
in $t$, that
	$$ F(\mu_{eq}) < F(\mu).$$ 
	
\end{proof}

\subsection{Usual Coulomb gases}

In this subsection we provide
different proofs to the large deviation
principles associated to Coulomb gases
studied in
 \cite{hardy} 
and \cite{adupuis}. These
models are usually motivated as 
describing the laws of eigenvalues of
some random matrices and has as particular cases 
the models studied in \cite{ldpwigner},
\cite{hiaiunitary}, \cite{hiaigaussian}
and
 \cite{chafai}.
We may see \cite{guionnet}
for an introduction to random matrices.
We would like to remark that
the model studied in
\cite{zeitouni} may be treated by the same
methods but does not fall directly in the
regime of
application of Theorem \ref{theorem: laplace}.

Suppose that $l$ is a not necessarily finite measure
on the Polish space $M$.
Let $V: M \to ( - \infty, \infty] $ 
and
$G: M \times M \to ( - \infty, \infty] $
be lower semicontinuous functions with $G$ symmetric
and such that $(x,y) \mapsto G(x,y) + V(x)+V(y)$
is bounded from below.
Define $H_n : M^n \to ( - \infty, \infty] $ by
	$$ H_n(x_1,...,x_n)
	= \sum_{i < j}^n G(x_i, x_j) +
	n \sum_{i =1}^n V(x_i)$$
and $W: \mathcal P(M) \to ( - \infty, \infty] $
by
	$$
	W(\mu) = \frac{1}{2}\int_{M\times M}
	\left(G(x,y) + V(x) + V(y) \right) 
	 d\mu(x) d\mu(y).
	$$
Take a sequence $\{\beta_n\}_{n \in \mathbb N}$
such that $\beta_n \to \infty$ and
let  $\gamma_n$ be the Gibbs measure defined by	
	$$d \gamma_n =
	e^{-\frac{\beta_n}{n} H_n} d l^{\otimes_n} .$$
We shall give some hypotheses that imply
that $\gamma_n$ satisfies
a Laplace principle.

The first example is related to
\cite{hardy}. More precisely,
if
we choose $G(x,y) = -\beta \log \|x - y\|$,
condition
$(1.7)$ of \cite{hardy} implies the first
three conditions of 
the following theorem 
(see the proof of Proposition 
\ref{proposition: gibbspolynomial}
for an idea) and the last condition is a consequence
the nature of the logarithmic interaction and
the required continuity of $V$ in \cite{hardy}.
We remark
that there is a slight typo in 
\cite{hardy}: we should require $\beta' > 2$
in
dimension two.

\begin{theorem}[Weakly confining case]
\label{theorem: weakly}

Take $\beta_n = n$. Suppose that

$\bullet$ $\int_M e^{-V} dl < \infty$,

$\bullet$ the function
$(x,y) \mapsto G(x,y) + V(x) + V(y)$
is bounded from below,

$\bullet$ $G(x,y) + V(x) + V(y) \to \infty$ when
$x, y \to \infty$ at the same time, and

$\bullet$
for every $\mu \in \mathcal P(M)$
such that $W(\mu) < \infty$,
there exists a sequence $\{\mu_n \}_{n \in \mathbb N}$
of probability measures absolutely continuous
with respect to $l$ such that
$\mu_n \to \mu$ and $W(\mu_n) \to W(\mu)$.

Then, for every bounded continuous function
$f: \mathcal P(M) \to \mathbb R$
we have
	$$\frac{1}{n^2} \log \,  \int_{M^n} 
	 e^{- n^2 f \circ i_n} d \gamma_n  
	 \,
	  \xrightarrow[\: n \to \infty \:]{}
	 \,
	- \inf_{\mu \in \mathcal P(M) } 
	\{ f \left( \mu \right) +
	 W \left( \mu \right) \}.$$	 
	 
\begin{proof}	 
Assume $\int_M e^{-V} dl = 1$ 
for simplicity. We notice that
	$$d\gamma_n =
	e^{-
	\left(  \sum_{i < j}^n G(x_i, x_j) +
	(n - 1) \sum_{i =1}^n V(x_i) 
	 \right)}
	d(e^{-V} l)^{\otimes_n}.$$
If we define
	$$\tilde G(x,y) = G(x,y) + V(x) + V(y)$$
and
	$$W_n(x_1,...,x_n) = 
	\frac{1}{n^2}\sum_{i < j}^n \tilde G(x_i, x_j)$$
we have
	$$d \gamma_n =
	e^{-n^2 W_n }
	d(e^{-V} l)^{\otimes_n}.$$
We now prove that $\{W_n\}_{n \in \mathbb N}$ 
satisfies
the conditions necessary to apply 
Theorem \ref{theorem: laplace}.	

{\bf Lower and upper limit assumption, 
\ref{lowerlimitassumption} and 
\ref{upperlimitassumption}}.
By hypotheses, $\tilde G$ is lower semicontinuous
and bounded from below.
We can apply Proposition \ref{proposition: k-body} to get
that $\{W_n\}_{n \in \mathbb N}$
is a {\it stable sequence} \ref{stablesequence}
 and
that $(\{W_n\}_{n \in \mathbb N},W)$ satisfies
the {\it lower limit assumption} 
\ref{lowerlimitassumption}
and the {\it upper limit assumption} 
\ref{upperlimitassumption}.

{\bf Regularity assumption 
\ref{regularityassumption}}.
Since $(\{W_n\}_{n \in \mathbb N},W)$
satisfies the {\it upper limit assumption} 
\ref{upperlimitassumption},
the {\it regularity assumption} 
\ref{regularityassumption} does not depend
on $\{W_n\}_{n \in \mathbb N}$
and we can use
Proposition \ref{proposition: kbodyregularity}.
Take $\mu \in \mathcal P(M)$ such that
$W(\mu) < \infty$. Then, by hypothesis,
there exists a sequence $\{\mu_n\}_{n \in \mathbb N}$
of probability measures absolutely continuous
with respect to $l$ such that $\mu_n \to \mu$ and
$W(\mu_n) \to W(\mu)$. As $W(\mu) < \infty$
we can assume $W(\mu_n) < \infty$ for every
$n \in \mathbb N$. Fix $n \in \mathbb N$. 
We want to prove that $\mu_n$ 
is absolutely continuous with respect to
the measure defined by $e^{-V} dl$.
For this it is enough
to notice that
$\mu_n (\{x \in M : \ V(x) = \infty\}) = 0$.
We can see that the set 
$\{(x,y) \in M \times M: \ V(x) = \infty
\mbox{ and } V(y) = \infty\}$
is included in the set
$\{(x,y) \in M \times M :
\ G(x,y) + V(x) + V(y) = \infty\}$. The latter
has zero measure because $W(\mu) < \infty$ and
we conclude by the definition of product measure.
 
{\bf Confining sequence \ref{confiningsequence}}.
Using that
$\tilde G(x,y) \to \infty$ when
$x, y \to \infty$ at the same time
and Proposition \ref{proposition: kbodyconfining}
we get that $\{W_n\}_{n \in \mathbb N}$ is a {\it confining sequence} \ref{confiningsequence}.

We can finally
apply Theorem \ref{theorem: laplace}.
\end{proof}

\end{theorem}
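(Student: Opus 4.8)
The plan is to recognize the setup as the mean-field ($2$-body) interaction of Section~\ref{section: kbody} with kernel $\tilde G(x,y) = G(x,y) + V(x) + V(y)$ and reference probability measure $\pi = e^{-V}\, l$, and then to verify the hypotheses of Theorem~\ref{theorem: laplace} in the case $\beta = \infty$. After the harmless normalization $\int_M e^{-V}\, dl = 1$ (the resulting constant contributes $O(1/n)$ to the rescaled logarithm), I would rewrite the Gibbs measure: since $\beta_n = n$ gives $\tfrac{\beta_n}{n} = 1$, one splits $n\sum_i V(x_i) = (n-1)\sum_i V(x_i) + \sum_i V(x_i)$, absorbs the last sum into $dl^{\otimes_n}$ to get $d\pi^{\otimes_n}$, and uses the identity $\sum_{i<j}\tilde G(x_i,x_j) = \sum_{i<j}G(x_i,x_j) + (n-1)\sum_i V(x_i)$ to obtain $d\gamma_n = e^{-n^2 W_n}\, d\pi^{\otimes_n}$ with $W_n = \tfrac{1}{n^2}\sum_{i<j}\tilde G(x_i,x_j)$. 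This is precisely the framework of Theorem~\ref{theorem: laplace} with $n\beta_n = n^2$, hence $\beta = \infty$ and $F = W$.

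Next I would check the conditions needed in the $\beta = \infty$ case. Since $\tilde G$ is lower semicontinuous and, by hypothesis, bounded from below, Proposition~\ref{proposition: k-body} applied with $k = 2$ and $G := \tilde G$ gives immediately that $\{W_n\}_{n \in \mathbb N}$ is a stable sequence~\ref{stablesequence} and that $(\{W_n\}_{n \in \mathbb N}, W)$ satisfies the lower and upper limit assumptions~\ref{lowerlimitassumption} and~\ref{upperlimitassumption}, the associated macroscopic functional $\tfrac{1}{2}\int \tilde G\, d\mu\, d\mu$ being exactly the stated $W$. The confining condition~\ref{confiningsequence} follows from Proposition~\ref{proposition: kbodyconfining}, since the hypothesis that $G(x,y) + V(x) + V(y) \to \infty$ as $x, y \to \infty$ simultaneously is its hypothesis for $\tilde G$.

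The one delicate point is the regularity assumption~\ref{regularityassumption}. Because the upper limit assumption~\ref{upperlimitassumption} holds, the set $\mathcal N$ of~(\ref{eq: nice}) reduces to $\{\mu : D(\mu\|\pi) < \infty\}$, and Proposition~\ref{proposition: kbodyregularity} then lets me replace finite entropy by absolute continuity with respect to $\pi$; so it suffices to approximate any $\mu$ with $W(\mu) < \infty$ by probability measures absolutely continuous with respect to $\pi = e^{-V}\, l$. The hypothesis provides approximants $\mu_n \to \mu$ with $W(\mu_n) \to W(\mu)$ that are only absolutely continuous with respect to $l$, and since $\pi$ and $l$ need not be equivalent on $\{V = \infty\}$, this does not by itself give $\mu_n \ll \pi$. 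I would bridge the gap with the energy bound: we may assume $W(\mu_n) < \infty$, and since $\tilde G$ is bounded from below the set $\{(x,y) : V(x) = V(y) = \infty\} \subset \{\tilde G = \infty\}$ has $\mu_n \otimes \mu_n$-measure zero, whence $\mu_n(\{V = \infty\}) = 0$ and therefore $\mu_n \ll \pi$. With all conditions verified, Theorem~\ref{theorem: laplace} in the case $\beta = \infty$ yields the stated Laplace principle with $F = W$. I expect this reconciliation of ``$\ll l$'' with ``$\ll \pi$'' through finiteness of the energy to be the only genuine obstacle; the rest is a direct application of the results of Section~\ref{section: kbody}.
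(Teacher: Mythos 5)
Your proposal is correct and follows essentially the same route as the paper: the same rewriting of $\gamma_n$ with kernel $\tilde G(x,y)=G(x,y)+V(x)+V(y)$ and reference measure $e^{-V}l$, the same appeal to Propositions \ref{proposition: k-body}, \ref{proposition: kbodyconfining} and \ref{proposition: kbodyregularity}, and the same resolution of the only delicate point, namely upgrading ``absolutely continuous with respect to $l$'' to ``absolutely continuous with respect to $e^{-V}l$'' via the inclusion $\{V(x)=V(y)=\infty\}\subset\{\tilde G=\infty\}$ and the finiteness of the energy. No gaps.
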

\ \\
The second example is 
related to the article this
work is inspired on, i.e.  \cite{adupuis}.
More precisely, Assumptions C1-C3 of 
\cite[Theorem 1.6]{adupuis} imply the conditions of the following theorem.
We remark that there is a slight typo
in \cite{adupuis}: Assumption A should
be changed by any weaker assumption that guarantees
the finiteness of the Gibbs measures.

\begin{theorem}[Strongly confining case]
\ \\
Suppose that

$\bullet$ 
There exists $\xi > 0$
such that $\int_M e^{- \xi V} dl < \infty$,

$\bullet$
$V$ is bounded from below,

$\bullet$
there exists $\epsilon \in [0,1)$ such that
$(x,y) \mapsto G(x,y) + \epsilon V(x) + \epsilon V(y)$
is
bounded from below, 

$\bullet$
the function
$G(x,y) +  V(x) +  V(y)$
tends to infinity when $x,y \to \infty$
at the same time, and

$\bullet$
for every $\mu \in \mathcal P(M)$
such that $W(\mu) < \infty$,
there exists a sequence $\{\mu_n \}_{n \in \mathbb N}$
of probability measures absolutely continuous
with respect to $l$ such that
$\mu_n \to \mu$ and $W(\mu_n) \to W(\mu)$.

Then, for every bounded continuous function
$f: \mathcal P(M) \to \mathbb R$
we have
	$$\frac{1}{n\beta_n} \log \,  \int_{M^n} 
	 e^{- n\beta_n f \circ i_n} d \gamma_n  
	 \,
	  \xrightarrow[\: n \to \infty \:]{}
	 \,
	- \inf_{\mu \in \mathcal P(M) } 
	\{ f \left( \mu \right) +
	 W \left( \mu \right) \}.$$	 

\begin{proof}

We can assume $\int_M e^{- \xi V} dl = 1$ for 
simplicity.
Then we can write
	$$d \gamma_n =
	e^{-\frac{\beta_n}{n}
	\left(  \sum_{i < j}^n G(x_i, x_j) +
	\left(n - \frac{n}{\beta_n} \xi \right) 
	\sum_{i =1}^n V(x_i) 
	 \right)}
	d (e^{-\xi V} l)^{\otimes_n}.$$
which may only make sense
for $n$ large enough
due to some positive and negative 
infinities. If we define
	$$G^n(x,y) = G(x,y) + \frac{1}{n-1}
	\left(n - \frac{n}{\beta_n} \xi \right) V(x) +
	\frac{1}{n-1}
	\left(n - \frac{n}{\beta_n} \xi \right) V(y) $$
and
	$$W_n(x_1,...,x_n) = 
	\frac{1}{n^2}\sum_{i < j}^n G^n(x_i, x_j)$$
we have
	$$d\gamma_n =
	e^{-n\beta_n W_n }
	d(e^{- \xi V} l)^{\otimes_n}.$$
Now we can try to apply Theorem 
\ref{theorem: laplace}
to get the Laplace principle.
Define
$$G_1(x,y) = G(x,y) +  \epsilon V(x) + 
 \epsilon V(y), \ \ \ 
 W^1_n(x_1,...,x_n) = 
	\frac{1}{n^2}\sum_{i < j}^n G_1(x_i, x_j),$$
$$G_2(x,y) = 
(1-\epsilon)V(x) + (1 - \epsilon)V(y), \ \ \
 W^2_n(x_1,...,x_n) = 
	\frac{1}{n^2}\sum_{i < j}^n G_2(x_i, x_j)$$
and
$$a_n = 
\frac{1}{1-\epsilon}\left( 
\frac{1}{n-1}
	\left(n - \frac{n}{\beta_n} \xi \right) - 
	\epsilon \right)
	\to 1.$$
This definitions allow us to write	
		$$W_n = W^1_n + a_n W^2_n.$$
We start by proving the 
{\it lower limit assumption} 
\ref{lowerlimitassumption}
 and the {\it upper limit assumption} 
 \ref{upperlimitassumption}.
 
{\bf Lower and upper limit assumption, 
\ref{lowerlimitassumption}
and \ref{upperlimitassumption}}. 
By the hypotheses, 
we can see that $G_1$ and $G_2$ are lower
semicontinuous functions bounded from below.		
Then, we can apply Proposition \ref{proposition: k-body} about
the k-body interaction to get that 
$\{W^1_n\}_{n \in \mathbb N}$ 
and $\{W^2_n\}_{n \in \mathbb N}$ 
are {\it stable sequences} 
\ref{stablesequence}
and if we define
the lower semicontinuous functions
$W^1(\mu) = \frac{1}{2} \int_{M \times M}
 G_1(x,y) d \mu(x)  d \mu(y)$
and
$W^2(\mu) = \frac{1}{2} \int_{M \times M}
 G_2(x,y) d \mu(x)  d \mu(y)$,
 then
 $(\{W^1_n\}_{n \in \mathbb N}, W^1)$
 and  $(\{W^2_n\}_{n \in \mathbb N}, W^2)$
 satisfy the {\it lower limit assumption} 
 \ref{lowerlimitassumption}
 and the {\it upper limit assumption} 
 \ref{upperlimitassumption}.
 
Then, as $a_n >0$ for $n$ large enough,
we get that $\{W_n\}_{n \in \mathbb N}$ 
is a {\it stable sequence} 
\ref{stablesequence}
for $n$ large enough.
Noticing that
	$$W^1(\mu) + W^2(\mu) = W(\mu)
	 = \frac{1}{2}\int_{M\times M}
	\left(G(x,y) + V(x) + V(y) \right)  d\mu(x) 
	d\mu(y).$$
we obtain
that $(\{W_n\}_{n \in \mathbb N}, W)$
satisfies the {\it lower limit assumption} 
\ref{lowerlimitassumption}
and the {\it upper limit assumption} 
\ref{upperlimitassumption}.

{\bf Confining sequence \ref{confiningsequence}}.
By Proposition $\ref{proposition: kbodyconfining}$ about
the confining
assumption in the k-body interaction 
and by the 
fact that $G(x,y) + V(x) + V(y) \to \infty$
when $x,y \to \infty$ at the same time, 
we get that
$\{W^1_n + W^2_n\}_{n \in \mathbb N}$ is a
{\it confining sequence} \ref{confiningsequence}.
Along with the fact that 
$\{W_n^1\}_{n \in \mathbb N}$ and
$\{W_n^2\}_{n \in \mathbb N}$ are 
{\it stable sequences}
\ref{stablesequence}
and that $a_n \to 1$ this implies that
$\{W_n\}_{n \in \mathbb N}$
is also a {\it confining sequence} 
\ref{confiningsequence}.

{\bf Regularity assumption 
\ref{regularityassumption}}.
By an argument similar to the one given
in the proof of Theorem \ref{theorem: weakly} 
we can prove
the {\it regularity assumption} 
\ref{regularityassumption} for $W$.

We have proved the conditions to apply Theorem
 \ref{theorem: laplace}.

\end{proof}

\end{theorem}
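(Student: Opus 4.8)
The plan is to absorb the external potential $V$ into the reference measure and then quote Theorem~\ref{theorem: laplace} in the case $\beta=\infty$. Assume for simplicity that $\int_M e^{-\xi V}\,dl=1$ and set $\pi=e^{-\xi V}l\in\mathcal P(M)$. Writing $dl^{\otimes_n}=e^{\xi\sum_i V(x_i)}\,d\pi^{\otimes_n}$ and gathering the $V$-contributions coming from $H_n$ and from this change of measure, one obtains, for $n$ large,
$$d\gamma_n=e^{-n\beta_n W_n}\,d\pi^{\otimes_n},\qquad W_n(x_1,\dots,x_n)=\frac1{n^2}\sum_{i<j}^n G^n(x_i,x_j),$$
with $G^n(x,y)=G(x,y)+c_n\bigl(V(x)+V(y)\bigr)$ and $c_n=\frac{n}{n-1}\bigl(1-\frac{\xi}{\beta_n}\bigr)\to 1$. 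So the whole problem reduces to checking that $(\{W_n\}_{n\in\mathbb N},W)$ is stable, confining, and satisfies (A1) and (A2').

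The delicate point is that the kernel $G^n$ depends on $n$, so Proposition~\ref{proposition: k-body} (stated for a fixed kernel) does not apply directly; worse, $G^n$ need not be uniformly bounded below, since $V$ is unbounded above and $c_n$ may dip below $1$, making $(c_n-1)(V(x)+V(y))$ arbitrarily negative. This is precisely what the hypothesis on $\epsilon$ repairs. I would split
$$G^n=G_1+a_n G_2,\quad G_1(x,y)=G(x,y)+\epsilon V(x)+\epsilon V(y),\quad G_2(x,y)=(1-\epsilon)\bigl(V(x)+V(y)\bigr),$$
where $a_n=\tfrac1{1-\epsilon}(c_n-\epsilon)\to 1$. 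Both $G_1$ and $G_2$ are fixed, lower semicontinuous and bounded below ($G_1$ by hypothesis, $G_2$ because $V$ is). Proposition~\ref{proposition: k-body} then makes $\{W^1_n\}$ and $\{W^2_n\}$ stable sequences and $(\{W^i_n\},W^i)$ satisfy (A1) and (A2), with $W^i(\mu)=\tfrac12\int G_i\,d\mu\,d\mu$ and $W^1+W^2=W$. Since $W_n=W^1_n+a_n W^2_n$ with $a_n>0$ for $n$ large, $a_n\to 1$, and the two pieces bounded below, $\{W_n\}$ is stable for $n$ large and $(\{W_n\},W)$ inherits (A1) and (A2) by a routine limit argument using $a_n\to 1$ and the lower bounds.

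For the confining property, $\{W^1_n+W^2_n\}$ is the $2$-body sequence attached to the fixed kernel $G_1+G_2=G(x,y)+V(x)+V(y)$, which tends to $\infty$ as $x,y\to\infty$ simultaneously; Proposition~\ref{proposition: kbodyconfining} makes it confining, and the decomposition $W_n=W^1_n+a_n W^2_n$ with bounded-below pieces and $a_n\to 1$ transfers the confining property to $\{W_n\}$ (a sublevel bound on $\tilde W_n(\mu_n)$ forces one on $\tilde W^1_n(\mu_n)+\tilde W^2_n(\mu_n)$). For (A2'), since (A2) already holds it suffices, by Proposition~\ref{proposition: kbodyregularity}, to approximate any $\mu$ with $W(\mu)<\infty$ by probability measures absolutely continuous with respect to $\pi$ and with converging $W$; the hypothesis supplies such a sequence $\{\mu_n\}$ absolutely continuous with respect to $l$, and --- exactly as in the proof of Theorem~\ref{theorem: weakly} --- one upgrades $\ll l$ to $\ll\pi=e^{-\xi V}l$ by observing that $W(\mu_n)<\infty$ forces $\mu_n(\{V=\infty\})=0$, since $\{V(x)=V(y)=\infty\}\subset\{G(x,y)+V(x)+V(y)=\infty\}$ is $\mu_n\otimes\mu_n$-null. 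Having verified stability, (A1), the confining property and (A2'), Theorem~\ref{theorem: laplace} in the case $\beta=\infty$ gives the stated Laplace principle with speed $n\beta_n$. I expect the change of measure and the choice of splitting to be the only genuinely delicate step; the rest transcribes the $k$-body and weakly-confining arguments already in the paper.
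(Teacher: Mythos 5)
Your proposal is correct and follows essentially the same route as the paper: the same change of reference measure to $e^{-\xi V}l$, the same decomposition $W_n=W^1_n+a_nW^2_n$ via $G_1=G+\epsilon V\oplus\epsilon V$ and $G_2=(1-\epsilon)(V\oplus V)$, the same use of Propositions~\ref{proposition: k-body}, \ref{proposition: kbodyconfining} and \ref{proposition: kbodyregularity}, and the same absolute-continuity upgrade borrowed from the weakly confining case. No gaps; you even make explicit the point (the $n$-dependence and possible non-uniform lower bound of $G^n$) that motivates the $\epsilon$-splitting, which the paper leaves implicit.
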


\subsection{Gaussian random polynomials}

In this subsection we
will see that \cite[Theorem 1]{zelditch}
is a consequence of Corollary \ref{cor: deviations}.
Consider 
a
probability measure $\nu \in \mathcal P(\mathbb C)$
and a continuous function
$\phi: \mathbb C \to \mathbb R$
such that
\begin{equation}
\label{eq: confining}
	\liminf_{z \to \infty}
	 \left\{\phi(z) - 2 \log \|z\|
	\right\} > -\infty.
\end{equation}	
Denote by $\mathbb C_n[z]$ the space of 
complex polynomials
of degree less or equal than $n$ and denote
by $j_n: \mathbb C_n[z]\backslash \mathbb C_{n-1}[z]
 \to \mathcal P(\mathbb C)$
the application that gives the empirical measure
of the zeros of a polynomial, i.e. $j_n$ is defined
by
	$$j_n(p) = \frac{1}{n}\sum_{i=1}^n \delta_{z_i}
	\  \mbox{ if } p(z) = a\prod_{i=1}^n (z-z_i)
	\mbox{ for some } a \neq 0.$$
We shall consider the complex Gaussian measure
$\mathcal G_n$ with covariance
$\langle \cdot , \cdot \rangle_n$ on
$\mathbb C_n[z]$ given by
$$\langle p , q \rangle_n = \int_{\mathbb C}
\bar p(z) q(z) e^{-n \phi(z)} d \nu(z)$$
where we have supposed that
$\langle \cdot, \cdot \rangle_n $ 
is non-degenerate. We will see that
the zeros of a random polynomial
chosen according to $\mathcal G_n$ can be treated
by Corollary \ref{cor: deviations}.
In other words, we are interested
in the pushforward measure of the restriction
of $\mathcal G_n$ to
$\mathbb C_n[z]\backslash \mathbb C_{n-1}[z]$
by $j_n$, that we will denote
by $j_n(\mathcal G_n)$ and that is still
a probability
measure because
$\mathcal G_n( \mathbb C_{n-1}[z]) = 0$, and
we want to write it in the
form (\ref{eq: gibbs}).

\begin{proposition}[Gibbs measure form
of the zeros of a random polynomial]
\label{proposition: gibbspolynomial}
Define the function
$G: \mathbb C \times \mathbb C \to (-\infty,\infty]$
by
	$$G(z,w) = -2\log\|z - w\| + \phi(z) + \phi(w).$$
Then, by the condition (\ref{eq: confining}),
$G$ is a lower semicontinuous
function bounded from below.
Also, by (\ref{eq: confining}), 
$\int_{\mathbb C} e^{-2\phi(z)} 
d \mathcal Leb(z) < \infty$. 
Define
$\pi \in \mathcal P(\mathbb C)$ by
	$$d\pi(z) =\frac{e^{-2 \phi(z)}}
	{ \int_{\mathbb C} e^{-2\phi(z)} 
	d \mathcal Leb(z)}	 
	d\mathcal Leb(z),$$
the symmetric measurable function 
$w_n: \mathbb C^n \to (-\infty,\infty]$
by
\begin{equation}
\label{eq: random polynomial gas}
	w_n(z_1,...,z_n)
	=\frac{1}{n^2} \sum_{i < j} G(z_i,z_j)
	+\frac{n+1}{n^2}
	\log 
	\left( \int_{\mathbb C}
	 e^{-\sum_{i=1}^{n} G(z,x_i)} d\nu(z)
	\right)
\end{equation}
and the Gibbs measure $\gamma_n$
by  
 $$d\gamma_n = e^{-n^2 w_n}d\pi^{\otimes_n}.$$
Then the zeros of a random polynomial chosen
according to $\mathcal G_n$ follows the law
$\frac{\gamma_n}{\gamma_n(\mathbb C^n)}$.
More precisely,
	$$j_n(\mathcal G_n) = 
	i_n \left(
	\frac{\gamma_n}{\gamma_n(\mathbb C^n)}
	\right)$$ 
where $i_n \left(
	\frac{\gamma_n}{\gamma_n(\mathbb C^n)}
	\right)$
denotes the pushforward measure of
$\frac{\gamma_n}{\gamma_n(\mathbb C^n)}$
by $i_n$.	
\begin{proof}
The lower semicontinuity of $G$ 
follows from the continuity of the logarithm 
and the continuity
of $\phi$. As $ -2\log\|z - w \| 
\geq -2\log 2 - 2 \log \|z\| - 2 \log \|w\| $
if $\|z\|, \|w\| \geq 1$ and
using (\ref{eq: confining}) we know that
$G$ is bounded from below. By 
(\ref{eq: confining}) there exists $C > 0$
such that
$e^{-2\phi(z)} \leq C\|z\|^{-4}$ if $\|z\|$
is large enough and we obtain
that 
$\int_{\mathbb C} e^{-2\phi(z)} 
d \mathcal Leb(z) < \infty$.

The statement about  
$j_n(\mathcal G_n)$ is a consequence
of \cite[Theorem 5.1]{raphael}
and the fact that
\begin{align*}
	\prod_{i<j}\|z_i - z_j\|^2 &
	\left(\int_\mathbb C 
	\prod_{i=1}^n \|z - z_i\|^2 
	e^{-n \phi(z)} d\nu(z)\right)^{-(n+1)} 
	d \mathcal Leb^{\otimes_n}(z_1,...,z_n)		
							\\
	&= 
	e^{-\sum_{i<j} G(z_i,z_j)}
	\left(\int_\mathbb C 
	e^{- \sum_{i=1}^n G(z_i,z)}
	 d\nu(z)\right)^{-(n+1)} 
	 \prod_{i=1}^n
	d\pi^{\otimes_n}(z_1,...,z_n)		.
\end{align*}

\end{proof}
\end{proposition}

The energy in (\ref{eq: random polynomial gas})
is a sum of an energy of the 2-body interaction
type and a different kind of energy that
we will try to understand.
Under appropriate conditions in $\phi$,
the authors of \cite{zelditch}
extend $G$ to $\bar {\mathbb C} \times 
\bar {\mathbb C}$ so we shall only consider compact
spaces.

Consider 
$G: M \times M \to (-\infty,\infty]$
a
lower semicontinuous function
on a compact metric space $M$.
Consider $\nu \in \mathcal P(M)$ a probability
measure on $M$
and denote its support by $K \subset M$.
Define $W_n : M^n \to [-\infty, \infty)$ by
$$W_n(x_1,...,x_n) =
\frac{1}{n}\log 
\left( \int_M e^{-\sum_{i=1}^{n} G(z,x_i)} d\nu(z)
\right)$$
and $W: \mathcal P(M) \to [-\infty,\infty)$
by
$$W(\mu) = - \inf_{x \in K}
	\left\{ \int_M G(x,y) d\mu(y) \right\}.$$
Notice that
$\{W_n\}_{n \in \mathbb N}$ is uniformly
bounded from above
and that it is not immediate
to say that $\{W_n\}_{n \in \mathbb N}$
is a {\it stable sequence} \ref{stablesequence}.
	
\begin{lemma}[Upper limit properties]
$W$ is upper semicontinuous and
for each $\mu \in \mathcal P(M)$ we have that
\begin{equation}
\label{eq: upper polynomial}
	\limsup_{n \to \infty} \mathbb E_{\mu^{\otimes_n}}
		 [W_n] \leq W(\mu).
\end{equation}
\begin{proof}

{\bf $W$ is upper semicontinuous.}
This can be seen as a consequence
of the lower semicontinuity of the function
$T: M \times \mathcal P(M) \to (-\infty,\infty]$
defined by $T(x,\mu) = \int_M G(x,y) d\mu(y)$
as follows.
Suppose $\mu_n \to \mu$ in $\mathcal P(M)$
and take $x_n \in K$ such that
$T(x_n,\mu_n) \leq 
\inf_{x \in K} T(x,\mu_n) + \frac{1}{n}$.
Then
$$\liminf_{n\to \infty}
T(x_n,\mu_n) \leq 
\liminf_{n \to \infty}
\left[\inf_{x \in K} T(x,\mu_n) \right].$$
Take a subsequence such that
$\lim_{j \to \infty}
T(x_{n_j}, \mu_{n_j})
=\liminf_{n\to \infty}
T(x_n,\mu_n) $
where, by taking a further subsequence if necessary,
we may assume that $x_n$ converge to some
$x_{\infty} \in K$. The lower semicontinuity
of $T$ implies that
$T(x_{\infty}, \mu)
\leq \lim_{j \to \infty}
T(x_{n_j}, \mu_{n_j})$ and so
	$$\inf_{x \in K} T(x,\mu) \leq
	\liminf_{n \to \infty}
	\left[\inf_{x \in K} T(x,\mu_n) \right].$$
	
{\bf Proof of (\ref{eq: upper polynomial}).} 
Notice that
$$\tilde W_n(\hat \mu) =
\frac{1}{n}\log 
\left( \int_M e^{-n \int_M G(z,x) d\hat \mu(x)} d\nu(z)
\right)$$
if $\hat \mu \in i_n(M^n)$,
where $\tilde W_n$ is defined by
(\ref{eq: extension}).	Then,
if $\hat \mu \in i_n(M^n)$, we have
	$$\tilde 
	W_n(\hat \mu) \leq 
	- \inf_{z \in K} \int_M G(z,x) d\hat \mu(x)
	= W(\hat \mu).$$
Let $\mu \in \mathcal P(M)$, then
	$$\mathbb E_{\mu^{\otimes_n}}[W_n]
	= \mathbb E_{i_n(\mu^{\otimes_n})}[\tilde W_n]
	\leq \mathbb E_{i_n(\mu^{\otimes_n})}[W]$$
and so
	$$\limsup_{n \to \infty}
	\mathbb E_{\mu^{\otimes_n}}[W_n]
	\leq 
	\limsup_{n \to \infty}
	\mathbb E_{i_n(\mu^{\otimes_n})}[W]
	\leq W(\mu)
	$$
by the upper semicontinuity and upper boundedness
of $W$.	

\end{proof}
\end{lemma}
	
We see that the 
{\it upper limit assumption, \ref{upperlimitassumption}},
with the sequence $\{W_n\}_{n \in \mathbb N}$
not necessarily a {\it stable sequence} \ref{stablesequence},
is satisfied in a very general context.
This is not the case for the
{\it lower limit assumption, \ref{lowerlimitassumption}}
and we will state the two main conditions
that allow us to obtain it.

\begin{definition}[Bernstein-Markov condition]
For any $\vec x = (x_1,...,x_n) \in M^n$ consider
the application $s_{\vec x}: M \to \mathbb R$
defined by
$s_{\vec x}(y)=  e^{-\sum_{i=1}^nG(x_i,y)}$
and denote the support of $\nu$ by
$K$.
We say that $(G,\nu)$ satisfies
the Bernstein-Markov condition if the following
is true.
For every $\epsilon>0$ there exists
$C>0$ such that 
$$
\sup_{y \in K} s_{\vec x}(y)
\leq C e^{\epsilon n}\|s_{\vec x}\|_{L^1(M,\nu)}
$$
for every $\vec x \in M^n$ and for every $n > 0$.
\end{definition}

\begin{definition}[Regular pair]
We will say that the pair
$(G,K)$ is regular if the following is true.
For every probability measure $\mu \in \mathcal P(M)$
and every $\epsilon > 0$ there exists 
a probability measure $\nu \in \mathcal P(M)$
such that
$\nu \left(\{x \in K : \int_M G(x,y)d\mu(y) 
\leq \inf_{z \in K} 
\int_M G(z,y)d\mu(y) + \epsilon \} \right) = 1
$
and $x \mapsto \int_M G(x,y)d\nu(y)$ is 
finite and continuous.
\end{definition}

Our Bernstein-Markov condition is an easy consequence
of the Bernstein-Markov condition
in the case of random polynomials
(see \cite[Lemma 9]{zelditch})
and our regular pair
condition is a consequence of the
non-thinness of $K$ (see
the proof of the second part
of \cite[Lemma 26]{zelditch}).

\begin{proposition}[Lower semicontinuity 
and lower boundedness]
\label{proposition: regular implies lower}
Suppose the pair $(G,K)$ is regular.
Then $W$ is lower semicontinuous and bounded
from below.

\begin{proof}
{\bf W is bounded from below.}
The regular pair 
condition implies, in particular, that
there exists a probability measure 
$\nu \in \mathcal P(M)$ supported on $K$ such
that $x \mapsto \int_M G(x,y)d\nu(y)$ is 
 continuous. So,
\begin{align*}
	W(\mu) &\geq - 
 	\int_K \left( \int_M G(x,y)d\mu(y) \right)
 	d\nu(x)										\\
 	&=
 	- \int_M \left( \int_K G(x,y)d\nu(x) \right)
 	d\mu(y)										\\
 	&\geq 
 	- \sup_{y \in M}
 	 \int_K G(x,y)d\nu(x) 
\end{align*} 	 
where we have used Fubini's theorem. 
As  $x \mapsto \int_K G(x,y)d\nu(y) $ is continuous,
it is  bounded from above and we have thus proved
that $W$ is bounded from below.

{\bf W is lower semicontinuous.}
Let $\{\mu_n\}_{n \in \mathbb N}$ be a sequence
of probability measures converging to some
$\mu \in \mathcal P(M)$.
We want to prove that
$\liminf_{n \to \infty} W(\mu_n) \geq W(\mu)$.
For $\epsilon > 0$
the regular pair condition says that there exists
$\nu \in \mathcal P(M)$ supported in $K$
such that
\begin{equation}
\label{eq: ineqregularpair}
	\int_M G(x,y)d\mu(y) 
	\leq \inf_{z \in K} 
	\int_M G(z,y)d\mu(y)  + \epsilon
\end{equation}
for $\nu$-almost every $x$
and $x \mapsto \int_M G(x,y)d\nu(y)$ is 
bounded continuous.
Then integrating
\ref{eq: ineqregularpair}
with respect to $\nu$ we get
	$$\int_{M \times M} G(x,y) d\nu(x) d\mu(y) \leq
	\inf_{z \in K} 
	\int_M G(z,y)d\mu(y) + 
	\epsilon$$
but, as $x \mapsto \int G(x,y)d\nu(y)$ is 
bounded continuous
we have that
	$$\int_M \left(\int_M G(x,y) 
	d\nu(x) \right)d\mu_n(y) 
	\to
	\int_M \left(\int_M G(x,y) 
	d\nu(x) \right)d\mu(y).  $$
As $\nu$ is supported in $K$ we know that
$$	\inf_{z \in K} 
	\int_M G(z,y)d\mu_n(y)
	\leq \int_M \left(\int_M G(x,y) 
	d\mu_n(y) \right)d\nu(x) .$$
Taking the upper limit and using Fubini's theorem
we get
	$$\limsup_{n \to \infty}
	\left[\inf_{z \in K} 
	\int_M G(z,y)d\mu_n(y) \right]
	\leq 
		\inf_{z \in K} 
	\int_M G(z,y)d\mu(y) + 
	\epsilon.
	 $$	
As this is true for every $\epsilon > 0$ we conclude
the proof.
\end{proof}
\end{proposition}

\begin{proposition}[Stability
and lower limit assumption]
\label{proposition: regular and bernstein}
Suppose $(G,K)$ is regular and that
$(G, \nu)$ satisfies the Bernstein-Markov condition.
Then $\{W_n\}_{n \in \mathbb N}$ is a
{\it stable sequence} \ref{stablesequence} and
the pair $(\{W_n\}_{n \in \mathbb N},W)$ satisfies the 
{\it lower limit assumption}, 
\ref{lowerlimitassumption}.

\begin{proof}

If we take the logarithm 
on both sides
of the Bernstein-Markov condition,
we get
$$
-\inf_{y \in K} 
\frac{1}{n}\sum_{i=1}^n
G(x_i,y)
\leq 
\frac{1}{n}\log(C) + \epsilon 
+ W_n(\vec x).$$
Equivalently, we have that
$$W(\mu) 
 \leq 
 \frac{1}{n} \log(C) + \epsilon +
 \tilde W_n(\mu)$$
and, in particular, as $W$ is bounded from below,
we obtain that $\{W_n\}_{n \in \mathbb N}$
is a {\it stable sequence} \ref{stablesequence}. 
If $\mu_n \to \mu$ then
$$W(\mu) \leq \liminf_{n \to \infty} W(\mu_n) 
 \leq 
 \epsilon +
 \liminf_{n \to \infty} \tilde W_n(\mu_n).$$
As this is true for every $\epsilon > 0$
we conclude the proof.	

\end{proof}
\end{proposition}
The following corollary immediately implies 
\cite[Theorem 1]{zelditch}.

\begin{corollary}[Zero temperature macroscopic limit]
Suppose that $(G,K)$ is regular and that
$(G,\nu)$ satisfies the Bernstein-Markov condition.
Suppose also that
for every probability measure
$\mu \in \mathcal P(M)$ such
that $\int_{M \times M} G(x,y) d\mu(x) d\mu(y)
< \infty$
there exists a sequence 
$\{\mu_n\}_{n \in \mathbb N }$ of probability measures
on $M$ such that
$D(\mu_n\| \pi) < \infty$
for every $n \in \mathbb N$ and such that
\begin{equation}
\label{eq: regularitypolynomials}
	\lim_{n 
\to \infty}
\int_{M \times M} G(x,y) d\mu_n(x) d\mu_n(y)
=\int_{M \times M} G(x,y) d\mu(x) d\mu(y).
\end{equation}
Define $w_n:M^n \to (-\infty,\infty]$ and
$w: \mathcal P(M) \to (-\infty, \infty]$ by
$$w_n(z_1,...,z_n)=
\frac{1}{n^2} \sum_{i < j} G(z_i,z_j)
	+\frac{n+1}{n^2}
	\log 
	\left( \int_{\mathbb C}
	 e^{-\sum_{i=1}^{n} G(z,x_i)} d\nu(z)
	\right)$$
and
$$w(\mu)= \frac{1}{2}\int_{M \times M}G(x,y) 
d\mu(x) d\mu(y) - 
\inf_{x \in K}	
\left\{ \int_K G(x,y) d\mu(y) \right\}.$$	
Then
$w$ is the zero temperature macroscopic limit of 
$\{w_n\}_{n \in \mathbb N}$.

\begin{proof}
Using Propositions
\ref{proposition: regular and bernstein}
and \ref{proposition: k-body}
we obtain that $w_n$ is well defined,
$\{w_n\}_{n \in \mathbb N}$ is a 
{\it stable sequence} \ref{stablesequence}
and
that $(\{w_n\}_{n \in \mathbb N},w)$ 
satisfies the {\it lower limit assumption}, 
\ref{lowerlimitassumption}.
The {\it regularity assumption},
\ref{regularityassumption}, is implied by
Proposition \ref{proposition: k-body},
the continuity of $w$ and 
(\ref{eq: regularitypolynomials}).

\end{proof}

\end{corollary}

\section{Fekete points and the zero temperature
deterministic case}
\label{section: fekete}

We begin by a fact which standard proof
can be found in \cite{david}.

\begin{proposition}[Convergence of
the infima]
\label{proposition: fekete}
If $W$ is the positive temperature 
macroscopic limit or the zero temperature
macroscopic limit of
a stable \ref{stablesequence} and 
confining \ref{confiningsequence}
sequence
$\{W_n\}_{n \in \mathbb N}$
then
	$$\inf W_n \to \inf W.$$
\end{proposition}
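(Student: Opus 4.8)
The goal is the two-sided estimate $\limsup_{n\to\infty}\inf_{M^n}W_n\le\inf W\le\liminf_{n\to\infty}\inf_{M^n}W_n$, where I write $\inf W=\inf_{\mu\in\mathcal P(M)}W(\mu)$ and note that, by the definition (\ref{eq: extension}) of $\tilde W_n$, one has $\inf_{M^n}W_n=\inf_{\mu\in\mathcal P(M)}\tilde W_n(\mu)$. I would prove the upper and lower bounds separately; only the lower one uses the {\it confining} hypothesis \ref{confiningsequence}, while the upper one uses (A2) in the positive temperature case and (A2') in the zero temperature case.

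\emph{Upper bound.} Since $W_n$ is bounded below (stability \ref{stablesequence}), for any $\mu\in\mathcal P(M)$ the quantity $\mathbb E_{\mu^{\otimes_n}}[W_n]$ is well defined and $\ge\inf_{M^n}W_n$. In the positive temperature case the {\it upper limit assumption} \ref{upperlimitassumption} gives $\limsup_n\mathbb E_{\mu^{\otimes_n}}[W_n]\le W(\mu)$ for every $\mu$, so taking the infimum over $\mu$ yields $\limsup_n\inf_{M^n}W_n\le\inf W$. In the zero temperature case the same estimate holds for every $\mu$ in the set $\mathcal N$ of (\ref{eq: nice}), giving $\limsup_n\inf_{M^n}W_n\le\inf_{\mathcal N}W$; the {\it regularity assumption} \ref{regularityassumption} then shows $\inf_{\mathcal N}W=\inf W$ (for $\mu$ with $W(\mu)<\infty$, approximate by $\mu_n\in\mathcal N$ with $\limsup_n W(\mu_n)\le W(\mu)$, so $\inf_{\mathcal N}W\le W(\mu)$, while $\mathcal N\subset\mathcal P(M)$ gives the reverse inequality).

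\emph{Lower bound.} Let $\ell=\liminf_n\inf_{M^n}W_n$; if $\ell=\infty$ the upper bound forces $\inf W=\infty=\ell$ and there is nothing more to prove, so assume $\ell<\infty$. Along a subsequence $\{n_j\}$ with $\inf_{M^{n_j}}W_{n_j}\to\ell$, pick for each $j$ a point $x^{(j)}\in M^{n_j}$ with $W_{n_j}(x^{(j)})\le\inf_{M^{n_j}}W_{n_j}+\tfrac1j$ and set $\mu_j=i_{n_j}(x^{(j)})$, so that $\tilde W_{n_j}(\mu_j)=W_{n_j}(x^{(j)})\to\ell$; in particular these values are bounded above, so the {\it confining} property \ref{confiningsequence} makes $\{\mu_j\}_j$ relatively compact in $\mathcal P(M)$. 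Passing to a further subsequence, $\mu_j\to\mu$ for some $\mu\in\mathcal P(M)$; extending $\{\mu_j\}_j$ to a sequence indexed by all of $\mathbb N$ that still converges to $\mu$ (set the missing terms equal to $\mu$) and applying the {\it lower limit assumption} \ref{lowerlimitassumption} gives $W(\mu)\le\liminf_j\tilde W_{n_j}(\mu_j)=\ell$. Hence $\inf W\le W(\mu)\le\ell$, and combined with the upper bound this yields $\inf W_n\to\inf W$.

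The routine part is the upper bound, which is essentially ``$\inf\le$ average'' together with (A2) (or (A2') and its built-in density statement). The main point, and where the argument would collapse without the right hypothesis, is the lower bound: near-minimizing configurations need not have empirical measures confined to a compact subset of $\mathcal P(M)$, and it is exactly assumption \ref{confiningsequence} that prevents this; the minor bookkeeping of promoting the extracted subsequence to a full sequence is needed only because the {\it lower limit assumption} is phrased for sequences indexed by all of $\mathbb N$.
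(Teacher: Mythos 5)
Your argument is correct; the paper itself only refers to its extended arXiv version for this proof, and what you give is precisely the standard two-sided argument: the upper bound from $\inf_{M^{n}}W_n \le \mathbb E_{\mu^{\otimes_n}}[W_n]$ together with \ref{upperlimitassumption} (or \ref{regularityassumption} plus the density of $\mathcal N$ in the zero-temperature case), and the lower bound from near-minimizing empirical measures, the confining hypothesis \ref{confiningsequence}, and \ref{lowerlimitassumption}. The bookkeeping device of padding the extracted subsequence with copies of $\mu$ so that \ref{lowerlimitassumption} applies is handled correctly and yields $W(\mu)\le\ell$ as claimed.
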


In particular we get the following consequence.

\begin{theorem}[Deterministic Laplace principle]
\label{theorem: gammaconvergence}
If $W$ is the positive temperature 
macroscopic limit or the zero temperature
macroscopic limit of
a stable \ref{stablesequence} and 
confining \ref{confiningsequence}
sequence
$\{W_n\}_{n \in \mathbb N}$
then for every 
bounded continuous function $f: M \to \mathbb R$ 
	$$\inf \{W_n + f \circ i_n \}
	 \to \inf \{ W + f \}.$$	 	 
\begin{proof}
It is enough to notice that
if 
$W$ is the positive temperature 
macroscopic limit (respectively, the zero temperature
macroscopic limit) of
the sequence $\{W_n\}_{n \in \mathbb N}$
then
$W + f$ is the positive temperature 
macroscopic limit (respectively, the zero temperature
macroscopic limit) of
the sequence 
$\{ W_n + f \}_{ n \in \mathbb N}$ and
use Proposition
\ref{proposition: fekete}.

\end{proof}	 	
\end{theorem}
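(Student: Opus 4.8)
The plan is to reduce Theorem~\ref{theorem: gammaconvergence} to Proposition~\ref{proposition: fekete} by showing that adding a bounded continuous function $f \colon M \to \mathbb R$ — or rather, the function $f \circ i_n$ on $M^n$ and $f$ on $\mathcal P(M)$ — to a macroscopic limit pair preserves all the structural hypotheses. Concretely, I would set $\hat W_n = W_n + f \circ i_n$ and $\hat W = W + f$ and verify in turn: stability, the confining property, and whichever of the positive/zero temperature macroscopic limit conditions is assumed. Once these are in place, Proposition~\ref{proposition: fekete} applied to $(\{\hat W_n\}_{n \in \mathbb N}, \hat W)$ gives $\inf \hat W_n \to \inf \hat W$, which is exactly $\inf\{W_n + f \circ i_n\} \to \inf\{W + f\}$.

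The verification of each hypothesis is routine and uses only that $f$ is bounded and continuous. For stability: if $W_n \geq C$ then $\hat W_n \geq C - \|f\|_\infty$, so $\{\hat W_n\}$ is a stable sequence. For the confining property: the extension of $\hat W_n$ in the sense of (\ref{eq: extension}) satisfies $\widetilde{\hat W}_{n}(\mu) = \tilde W_n(\mu) + f(\mu)$ whenever $\mu$ is atomic of the form $\frac1n\sum \delta_{x_i}$ (and is $+\infty$ otherwise, just as for $\tilde W_n$); hence a bound $\widetilde{\hat W}_{n_j}(\mu_j) \leq A$ forces $\tilde W_{n_j}(\mu_j) \leq A + \|f\|_\infty$, and relative compactness of $\{\mu_j\}$ follows from the confining property of $\{W_n\}$. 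For the lower limit assumption \ref{lowerlimitassumption}: if $\mu_n \to \mu$ then by continuity of $f$, $\liminf_n \widetilde{\hat W}_n(\mu_n) = \liminf_n \big(\tilde W_n(\mu_n) + f(\mu_n)\big) \geq W(\mu) + f(\mu) = \hat W(\mu)$. For the upper limit assumption \ref{upperlimitassumption}: $\mathbb E_{\mu^{\otimes_n}}[\hat W_n] = \mathbb E_{\mu^{\otimes_n}}[W_n] + f(\mu)$ since $f \circ i_n$ is constant under no — wait, it is not constant, but $\mathbb E_{\mu^{\otimes_n}}[f \circ i_n] \to f(\mu)$ by the law of large numbers, so $\limsup_n \mathbb E_{\mu^{\otimes_n}}[\hat W_n] \leq W(\mu) + f(\mu) = \hat W(\mu)$. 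For the regularity assumption \ref{regularityassumption}: the set $\mathcal N$ for $(\{\hat W_n\}, \hat W)$ coincides with that for $(\{W_n\}, W)$ (again because the $f$-contributions match on both sides of the defining inequality in the limit), and if $\mu_n \to \mu$ with $\limsup_n W(\mu_n) \leq W(\mu)$ then also $\limsup_n \hat W(\mu_n) \leq \hat W(\mu)$ by continuity of $f$; the condition $\hat W(\mu) < \infty \iff W(\mu) < \infty$ is immediate.

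The only mildly delicate point — the one I would treat as the main obstacle, though it is still elementary — is the upper limit assumption, where one must be careful that $f \circ i_n$ is \emph{not} deterministic under $\mu^{\otimes_n}$; its expectation merely converges to $f(\mu)$ by the weak law of large numbers (equivalently, $i_n(\mu^{\otimes_n}) \to \delta_\mu$ weakly in $\mathcal P(\mathcal P(M))$), together with boundedness of $f$ to pass the $\limsup$ through. Everything else is a direct substitution. After collecting these facts, the conclusion is a one-line application of Proposition~\ref{proposition: fekete}, exactly as written in the short proof above.
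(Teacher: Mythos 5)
Your proposal is correct and follows exactly the paper's route: reduce to Proposition \ref{proposition: fekete} by checking that $(\{W_n + f\circ i_n\}, W+f)$ inherits stability, the confining property, and the macroscopic-limit assumptions, which the paper simply asserts as ``enough to notice.'' Your careful treatment of the upper limit assumption via $\mathbb E_{\mu^{\otimes_n}}[f\circ i_n]\to f(\mu)$ (law of large numbers plus boundedness of $f$) is precisely the detail the paper leaves implicit.
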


This may be seen as a natural analogue of 
the Laplace principle. It is just 
(\ref{eq: limit to prove}) without the entropy
term (as if $\beta_n$ were infinity).
This analogue is related to the notion
of $\Gamma$-convergence 
(see \cite{dal} for an introduction
to this topic) as
is said in the following remark.

\begin{remark}[$\Gamma$-convergence]
Theorem \ref{theorem: gammaconvergence}
can be used to prove the 
$\Gamma$-convergence 
of the sequence $\tilde W_n$
defined in (\ref{eq: extension})
(see
\cite[Theorem 9.4]{dal}). In fact,
the confining property of $\{W_n\}_{n \in \mathbb N}$
is not needed as we can obtain
the $\Gamma$-convergence
from
the following standard statement if we take
$A_n$ to be equal to the graph of $\tilde W_n$.

Let $E$ be a measurable space.
Take a sequence $\{A_n\}_{n \in \mathbb N}$
of measurable sets in $E$ and choose
$x \in E$. The following affirmations
are equivalent.

${\bf a)}$ There exists
a sequence $\{X_n\}_{n \in \mathbb N}$ 
of random variables
taking values in $E$
such that 
	$$\forall n \in \mathbb N, \,
	\mathbb P(X_n \in A_n) = 1 
	\ \ \ \mbox{ and } \ \ \ 
	X_n \stackrel{\mathbb P}{\to} x.$$
	
${\bf b)}$ There exists
a sequence $\{x_n\}_{n \in \mathbb N}$ in $E$
such that 
	$$\forall n \in \mathbb N, \, x_n \in A_n
	\ \ \ \mbox{ and } \ \ \ x_n \to x.$$

\end{remark}

{\bf Acknowledgments.}
I would like to thank Raphaël Butez, 
Djalil Chafaï, Adrien Hardy and
the anonymous referees for their useful remarks.

\ \\
\textsc{CEREMADE, UMR CNRS 7534 Université 
Paris-Dauphine, PSL Research university, 
Place du Maréchal
de Lattre de Tassigny 75016 Paris, FRANCE.}\par
  \textit{E-mail address}: \texttt{garciazelada@ceremade.dauphine.fr}

\end{document}